\newtheorem{theorem}{Theorem}[section]
\newtheorem{lemma}[theorem]{Lemma}
\theoremstyle{definition}
\newtheorem{definition}[theorem]{Definition}
\newtheorem{prop}[theorem]{Proposition}
\newtheorem{cor}[theorem]{Corollary}
\newtheorem{ass}{Assumption}
\theoremstyle{remark}
\newtheorem{remark}[theorem]{Remark}
\numberwithin{equation}{section}
\theoremstyle{definition}
\numberwithin{equation}{section}
\newcommand{\subjclass}[1]{\bigskip\noindent\emph{2010 Mathematics Subject Classification:}\enspace#1}
\newcommand{\keywords}[1]{\noindent\emph{Keywords:}\enspace#1}
\begin{document}


\baselineskip=17pt


\title{An Obstacle Problem for Elastic Curves: \\ Existence Results}

\author{Marius Müller\\
Universität Ulm, Helmholtzstraße 18, 89081 Ulm, Germany\\
marius.mueller@uni-ulm.de\\ }
\date{\today}

\maketitle


\begin{abstract}
We consider an obstacle problem for elastic curves with fixed ends.  We attempt to extend the graph approach provided in \cite{Anna}. More precisely, we investigate nonexistence of graph solutions for special obstacles and extend the class of admissible curves in a way that an existence result can be obtained by a penalization argument.

\subjclass{Primary 49Q20, 28A75; Secondary 53C80, 35Q99}

\keywords{Obstacle Problem, Elastic Energy, Length Penalization, Convex Envelope}
\end{abstract}

\section{Introduction}
\subsection{Model and main results} 
The energy considered in this article is the Euler-Bernoulli energy or simply elastic energy, given by
\begin{equation*}
\mathcal{E}(\gamma) = \int_\gamma \kappa^2(s) d\mathbf{s},
\end{equation*}
where $\kappa$ denotes the curvature of a sufficiently smooth planar curve $\gamma$  and $d\mathbf{s}$ denotes the arclength parameter. In what follows, we will fix the endpoints of $\gamma$, so we can assume that $\gamma: [0,1] \rightarrow \mathbb{R}$ is such that $\gamma(0) =(0,0)^T$ and $\gamma(1) = (1,0)^T$. The elastic energy is well-defined on curves with at least two weak derivatives, which are additionally immersed, i.e. there is an $\epsilon > 0 $ satisfying $|\gamma'| > \epsilon$ on $ (0,1)$. In this case  $\mathcal{E}(\gamma)$ can be rewritten as 
\begin{equation*}
\mathcal{E}(\gamma ) = \int_0^1 \frac{\langle \gamma'', N_\gamma \rangle^2}{|\gamma'|^3} dt,
\end{equation*}
where
\begin{equation*}
N_\gamma := \frac{1}{|\gamma'|} \begin{pmatrix}
-\gamma_2' \\ \gamma_1' 
\end{pmatrix}
\end{equation*}
is the unit normal vector associated to the tangential vector $\gamma'$ in $\mathbb{R}^2$. 

We are interested in an obstacle problem, so we have to impose that the curve $\gamma$ lies above a given obstacle, which we will usually call $\psi$. 
\begin{ass}(Assumptions on the obstacle) \label{ass:obst}

In what follows, the obstacle $\psi \in C^0([0,1]) $ shall satisfy the following conditions:
\begin{equation*}
\psi(0),\psi(1) < 0, \quad 
\max_{x \in [0,1]} \psi(x) > 0. 
\end{equation*}
Moreover, at each point $x \in (0,1)$ there exist $\partial_+ \psi, \partial_-\psi$, the left and right sided first derivatives of $\psi$ and there is $C > 0 $ such that $|\partial_+ \psi (x) |, |\partial_- \psi(x) | \leq C$ for each $x \in (0,1)$. The smallest such $C$ will be denoted by $||\psi'||_\infty$, with an abuse of notation. 
\end{ass}

The obstacle condition can be understood as a confinement. The problem of minimizing the elastic energy subject to confinements has recently raised some interest, for example in \cite{Dayrens}, minimizing the same energy on closed curves confined to a bounded domain. The minimization of higher order functionals with obstacle constraints is a vivid field of research, with important contributions to be found in \cite{Caffarelli} for the biharmonic operator including results on regularity and the behavior of the free boundary. Moreover, \cite{Okabe} introduces a steepest energy descent flow for the energy associated to the biharmonic operator that respects the obstacle constraint.

In \cite{Anna}, the same obstacle problem as the given one is investigated, with the additional assumption that $\gamma$ is a graph, i.e $\gamma$ possesses a reparametrization of the form $(\cdot, u(\cdot))$ for some $u \in W^{2,2}(0,1)$. The elastic energy is then given by
\begin{equation}\label{eq:graphenergy}
\mathcal{E}(u) := \mathcal{E}(\gamma) = \int_0^1 \frac{u''(x)^2}{(1 + u'(x)^2)^\frac{5}{2}} dx, \quad \textrm{since} \; \; \; \kappa(x) = \frac{u''(x)}{(1+ u'(x)^2)^\frac{3}{2}}.
\end{equation}


The assumption in \cite{Anna} that $u \in W^{2,2}(0,1)$ tacitly imposes the condition that $u' \in L^\infty(0,1)$, which will turn out to be restrictive. Under this additional restriction, \cite{Anna} was able to show existence of a solution provided that $\psi$ satisfies certain smallness conditions. Numerics suggested that there might not be a graph solution in case that these conditions are violated, though. Indeed, \cite{Anna} contains a nonexistence result when minimizing in the class of symmetric graphs. We will improve this result and get rid of the symmetry assumption for certain obstacles. Therefore, for the rest of this article the admissible set of functions will be 
\begin{equation}\label{eq:graph set}
G_\psi := \{ u \in W^{2,2}(0,1) | \; u(0)= u(1) = 0 , u(x) \geq \psi(x) \; \forall x \in [0,1] \} . 
\end{equation}
We are now able to state the main nonexistence result of this article.
\begin{theorem}\label{thm:nonexcone} (Nonexistence of graph solutions for large cone obstacles) 

Let $\psi$ be a symmetric cone obstacle (see Definition \ref{def:symcone}).
Then $\inf_{ u \in G_\psi} \mathcal{E}(u)$ is not attained, if 
\begin{equation*}
\sup_{x \in (0,1)} \psi(x) > \sup_{A> 0 } \frac{1}{3}A \frac{\mathit{HYP2F1}( 1, \frac{3}{2} ; \frac{7}{4} , -A^2) }{\mathit{HYP2F1}(\frac{1}{2},1, \frac{3}{4}, -A^2)} \quad  [ \simeq 0.834626 ],  
\end{equation*}
where $\mathit{HYP2F1}$ denotes the hypergeometric function, see Definition \ref{def:hypgeo}. 
\end{theorem}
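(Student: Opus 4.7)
The plan is to argue by contradiction: assume $u^\ast \in G_\psi$ attains the infimum. The embedding $W^{2,2}(0,1) \hookrightarrow C^1([0,1])$ gives $u^\ast \in C^1$ with $\|(u^\ast)'\|_\infty < \infty$. The obstacle condition, together with the fact that the symmetric cone attains its maximum $H := \sup\psi$ only at $x=1/2$, forces $u^\ast(1/2) \geq H$. Since $\sup\psi > 0 > \psi(0), \psi(1)$, the coincidence set $\mathcal{C} := \{u^\ast = \psi\}$ is non-empty. On each connected component of $[0,1]\setminus\mathcal{C}$, $u^\ast$ satisfies the unconstrained Euler--Lagrange equation of $\mathcal{E}$ and is therefore a graph-elastica arc, while on $\mathcal{C}$ it coincides with the piecewise-affine cone and is affine. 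The $C^1$-regularity of $u^\ast$ forces slope-matching at every junction point in $\partial\mathcal{C}$, so that any arc meeting the cone at an interior junction inherits the cone slope there.

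The analytic core of the argument is an explicit height-to-length bound for graph-elastica arcs with horizontal tangent at one endpoint. Writing the Euler--Lagrange equation for $\mathcal{E}$ in graph form and integrating once yields a first integral that separates under the substitution $v'(x) = \tan\theta(x)$, so that both the horizontal extent $L$ and the apex-height gain $Y$ of such an arc reduce to complete elliptic integrals of the first and second kinds in a modulus depending on the slope $A$ at the non-horizontal end. Rewriting these elliptic integrals in their classical hypergeometric forms yields an identity of the form $Y/L = h(A)$, with $h(A) = \tfrac{1}{3}\, A \, \mathit{HYP2F1}(1,\tfrac{3}{2};\tfrac{7}{4};-A^2)/\mathit{HYP2F1}(\tfrac{1}{2},1;\tfrac{3}{4};-A^2)$ (up to the normalization fixed by the definition of symmetric cone). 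Since $A \leq \|(u^\ast)'\|_\infty < \infty$, every arc of $u^\ast$ satisfies $Y/L < \sup_{A>0} h(A)$.

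Assembling these ingredients on the left half $[0,1/2]$ yields the estimate $u^\ast(1/2) \leq \sup_{A>0} h(A)$, after accounting for the affine contributions on cone segments (whose slope at each junction is fixed by $C^1$-matching) and the elastica-arc contributions via the bound above. Combined with $u^\ast(1/2) \geq H > \sup_{A>0} h(A)$ from the hypothesis, this yields the desired contradiction. The main difficulties are the explicit reduction of the elliptic integrals to the hypergeometric ratio in the statement, and a careful case analysis over the possible configurations of $\mathcal{C}$—in particular, handling non-symmetric coincidence patterns, where one must apply the height-to-length inequality to each arc separately rather than reducing to a single symmetric profile.
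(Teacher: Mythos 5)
Your overall strategy is the paper's: argue by contradiction, use the Euler--Lagrange structure off the contact set, integrate once to obtain an explicit first-order ODE for $u'$, and convert the resulting integrals into the hypergeometric ratio that bounds the height of the minimizer. The analytic endpoint you aim for is exactly the paper's Lemma~\ref{lem:intiden}, namely that $\int_0^A \frac{t\,dt}{\sqrt{A-t}(1+t^2)^{5/4}}\big/\bigl(2\int_0^A \frac{dt}{\sqrt{A-t}(1+t^2)^{5/4}}\bigr)$ equals $\frac13 A\,\mathit{HYP2F1}(1,\frac32;\frac74;-A^2)/\mathit{HYP2F1}(\frac12,1;\frac34;-A^2)$. (A minor mislabeling: these integrals are not complete elliptic integrals of the first and second kind --- the exponent $\frac54$ places them outside that family --- though they are indeed $\mathit{HYP2F1}$ evaluations.)

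The genuine gap is in the assembly step. You defer ``a careful case analysis over the possible configurations of $\mathcal{C}$,'' but that is where the whole difficulty sits, and the plan you sketch for it does not close. First, your height-to-length bound applies only to an arc with horizontal tangent at one end \emph{and} with the natural boundary condition $v=\frac{u''}{(1+u'^2)^{5/4}}=0$ at the other: that condition is what annihilates the integration constant in the first integral and produces the stated $h(A)$, and it is available only at $x=0$ (Proposition~\ref{prop:p5}(4)), not at interior junctions with the cone. Arcs running between two contact points therefore do not have the computed form. Second, an affine segment lying on the cone has slope $\frac{A}{\frac12-s}>2A>2\sup_{B>0}h(B)$ under the hypothesis of the theorem, so a per-piece height-over-length estimate cannot sum to $u^*(\frac12)\le\sup_{B>0}h(B)$. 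The paper removes all of these configurations at a stroke with Proposition~\ref{prop:cone1}: a minimizer is concave (Proposition~\ref{prop:p5}(1)), and concavity together with the tangent-line/cone geometry forces the contact set to be exactly $\{\frac12\}$, since $u(a)=\psi(a)$ with $a<\frac12$ would give $u'(a)=\psi'(a)$ and hence $0=u(0)\le\psi(a)+\psi'(a)(0-a)=\psi(0)<0$. With that, the whole left half is a single arc anchored at $x=0$ and your bound applies verbatim; note that you never invoke concavity, which is also needed to fix the sign when extracting $u''$ from the first integral and to place the apex $x^*$ in $[0,\frac12]$. Finally, the paper also proves that $\sup_{B>0}h(B)$ is finite (Lemma~\ref{lem:finiteness}, via Pfaff's transformation), without which the hypothesis of the theorem would be vacuous; your write-up should address this as well.
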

\begin{figure}[ht]
\centering 
\includegraphics[scale=0.63]{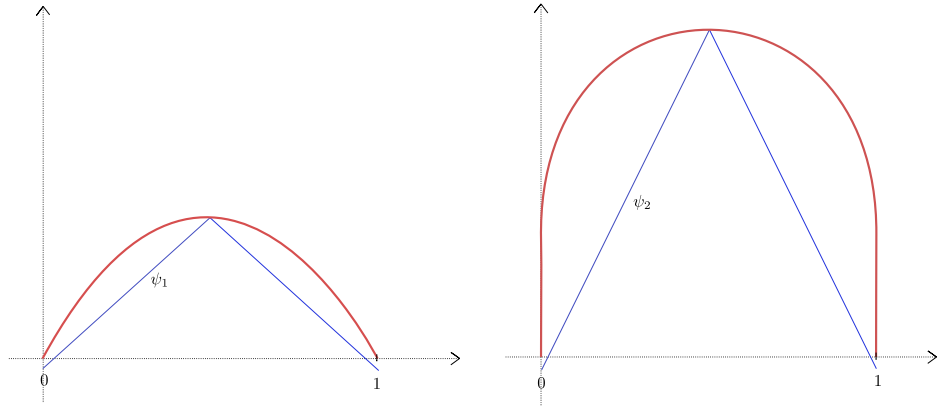}
\caption{A small cone obstacle where a minimizer in $G_\psi$ exists and a large one, where it does not - at least not as a graph.}\label{fig:1}
\end{figure}

 The value $\simeq 0.83 $ found numerically in Theorem \ref{thm:nonexcone} matches up with the result in \cite[Lemma 4.2]{Anna}, obtained for symmetric graphs. In this case - using important results from \cite{Deckelnick} - a function $U_0$ (see \eqref{def:Unull}) is found such that $\psi < U_0$ implies existence of a solution in $G_\psi$ and if $\psi(x_0) > U_0(x_0)$ for some $x_0 \in (0,1)$ it seems unlikely that a solution is to be found in the class of graphs at all. Note that
\begin{equation*}
 \max_{x\in [0,1]} U_0(x) = \frac{1}{\int_0^\infty \frac{1}{(1+ s^2)^\frac{5}{4}} ds } \simeq 0.834626,
 \end{equation*} 
 so indeed we found analytical evidence for the numerical evidence given in \cite[p.18]{Anna}.

The aforementioned theorem gives rise to the question whether a solution can always be found in a larger set. It is worth noting that a too liberal framework would lead to nonexistence of minimizers again: Define
\begin{eqnarray*}
K_\psi & := &  \{ \gamma \in W^{1,1}((0,1); \mathbb{R}^2) \cap W^{2,1}_{loc}((0,1); \mathbb{R}^2) | \;  \gamma(0) = (0,0)^T, \gamma(1) =(0,1)^T, \nonumber \\  &&  \qquad \qquad  \qquad \qquad \qquad \qquad \; \; \;   \gamma_2(t) \geq \overline{\psi}(\gamma_1(t)) \: \forall t \in [0,1] \},
\end{eqnarray*}
where $\overline{\psi}$ denotes the continuous extension of $\psi$ to $\mathbb{R}$ by a constant function. Then, 
\begin{equation*}
\inf_{\gamma \in K_\psi} \mathcal{E}(\gamma) = 0,
\end{equation*}
which is not attained, since only straight lines can have vanishing curvature. Figure \ref{fig:2} shows how to construct an element of $K_\psi$ with arbitrarily small energy.

\begin{figure}[ht]
\centering
\includegraphics[scale=0.55]{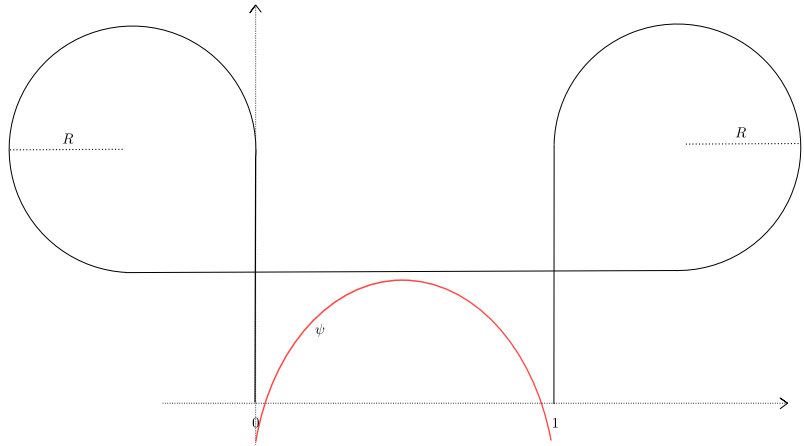}
\caption{A curve consisting of two three-quarter-circles of Radius $R$ connected by lines that lift it over $\psi$.}\label{fig:2}
\end{figure}

  The authors in \cite{Anna} suggest that the reason for nonexistence for large obstacles is blow-up of the derivatives near the boundary. So, the new framework should allow admissible curves to have vertical tangent lines.

For this, we will introduce the notion of pseudographs. 

\newpage

\begin{definition}(Pseudograph) \label{prop:p10}

A curve $\gamma \in W^{1,1}((0,1);\mathbb{R}^2)  \cap W^{2,1}_{loc}((0,1);\mathbb{R}^2)$ is called \emph{pseudograph} if it is immersed and $\gamma_1'(t) \geq 0 $ for each $t \in (0,1)$ and whenever $\gamma_1'(t) > 0$ on some interval $(a,b)$, then  $\gamma_2 \circ \gamma_1^{-1} \in W^{2,2}_{loc}(\gamma_1(a),\gamma_1(b))$. 
\end{definition}

In particular, such curves may have vertical tangent lines. The set of pseudographs can be thought of as a closure of the graphs in the topology of $W^{1,1} ( (0,1) ; \mathbb{R}^2) \cap W^{2,1}_{loc}((0,1), \mathbb{R}^2) $. The reason why we can only require $L^1$-integrability of the derivatives is that other spaces are not closed under the reparametrizations we use.

The pseudograph approach is a little delicate because of the following: If a curve $\gamma = \gamma(t)$ is indeed vertical at the boundary, then we can get a new curve prolonging the vertical parts. The curve remains in the class of pseudographs, above the obstacle, and the elastic energy remains unchanged (see Figure \ref{fig:3}).

\begin{figure}[ht]
\centering
\resizebox{\height}{10cm}{\includegraphics[scale=0.43]{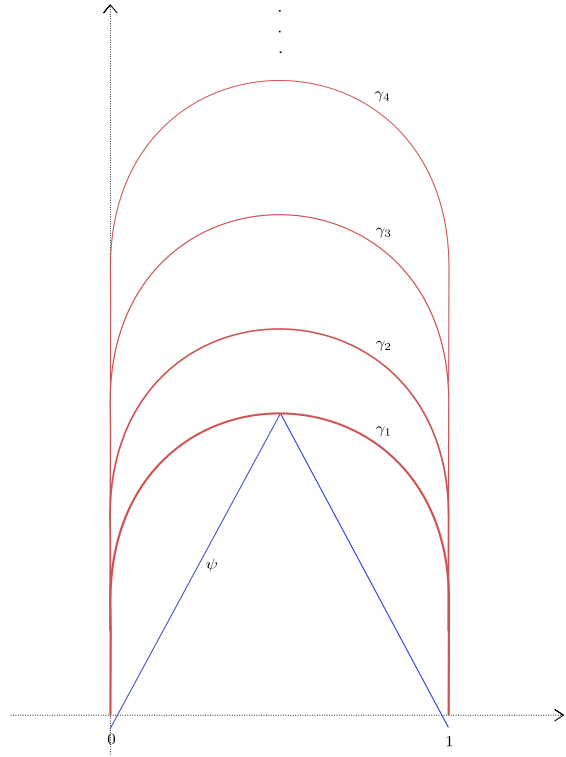}} 
\caption{A (possibly) minimizing sequence without a converging subsequence.}\label{fig:3}
 \end{figure}

Therefore, bounded length of an arbitrary minimizing sequence cannot be expected and so direct methods do not apply in an obvious way. To obtain an existence result, we have to pick a minimizing sequence that does not "beat loose" in the sense that it can develop arbitrarily large vertical parts. In order to do so, we will use a penalization technique. This step was not needed for the study of bounded confinements in \cite{Dayrens}, but this article points out that unbounded confinements are interesting showing that any bounded confinement is touched. 

The penalization term is going to be the length, which was chosen mainly because it is invariant with respect to reparametrization and also because of previous profitable examinations in \cite{Miura2}. Hence, we consider for any $\epsilon \geq 0 $ the penalized energy 
\begin{equation}\label{eq:pen}
\mathcal{E}_\epsilon(\gamma) := \int_\gamma \kappa^2 ds + \epsilon \int_\gamma ds  = \mathcal{E}(\gamma) + \epsilon \mathcal{L}(\gamma).
 \end{equation}
Note that $\mathcal{E}_0 = \mathcal{E}$. It will not be surprising that for each $\epsilon > 0 $, $\mathcal{E}_\epsilon$ admits a minimizer in
\begin{equation} \label{eq:Pseudo}
P_\psi := \{ \gamma  \; \textrm{pseudograph} : \gamma(0) = (0,0)^T, \gamma(1)= (1,0)^T, \gamma_2 \geq \psi \circ \gamma_1  \} . 
\end{equation} 
We attempt to understand the shape of this minimizer and see how the problem evolves as $\epsilon$ becomes small.

We examine properties of functions with small energy using geometric measure theory to obtain a concavity result inspired by \cite[Lemma 2.1]{Anna}. The main result is going to be the following:

\begin{theorem}\label{thm:panettone} (The energy-efficient $\cap$-shape)

Let $\gamma_0 \in P_\psi$. Consider
\begin{align}\label{eq:panettone}
B_{\gamma_0} := &\big\{ \gamma \in W^{1,1}((0,1);\mathbb{R}^2) \cap W^{2,1}_{loc}((0,1);\mathbb{R}^2)  :  \textrm{$\gamma$ is an immersed pseudograph and} \\*  & \qquad \exists 0 \leq \beta_1 < \beta_2 \leq 1 : (\gamma_1 )_{\mid_{[0, \beta_1]}} = 0 , \\ & \qquad (\gamma_1 )_{\mid_{[\beta_2 ,1]}} = 1, \textrm{while} \; \gamma_1'(t) > 0  \;  \forall t \in (\beta_1, \beta_2)  \;  \textrm{and if $u$ denotes the graph} \\* & \qquad \textrm{reparametrization of $\gamma_{\mid_{(\beta_1, \beta_2)}} $ then }  \textrm{$u$  satisfies $u \circ \gamma_{0,1} \geq \gamma_{0,2} $} \big\} \subset P_\psi  
\end{align}
the so-called set of all \textit{$\cap$-shaped pseudographs} lying above $\gamma_0$. Then there is $\gamma \in B_{\gamma_0} $ such that the graph reparametrization $u$ of $\gamma$ is concave and $\mathcal{E}(\gamma) \leq \mathcal{E}(\gamma_0)$ as well as $\mathcal{L}(\gamma) \leq \mathcal{L}(\gamma_0)$. In particular, for each $\epsilon \geq 0 $,
\begin{equation*}
\mathcal{E}_\epsilon(\gamma) \leq \mathcal{E}_\epsilon(\gamma_0). 
\end{equation*}
\end{theorem}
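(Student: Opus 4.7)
The plan is to take $\gamma$ to be the $\cap$-shape built from the least concave majorant of $\gamma_0$'s upper envelope, completed by vertical boundary segments, and to verify the two inequalities using the affine/contact decomposition of concave envelopes. First I would define the upper envelope $u_0:[0,1]\to\mathbb{R}$ by
\[
u_0(x):=\max\{\gamma_{0,2}(t):t\in[0,1],\ \gamma_{0,1}(t)=x\},
\]
which is well defined and upper semicontinuous since $\gamma_{0,1}^{-1}(\{x\})$ is compact and $\gamma_{0,2}$ continuous. Let $u$ denote the least concave majorant of $u_0$ on $[0,1]$ and set $v_0:=u(0),\ v_1:=u(1)$. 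I would take $\gamma$ to be the concatenation of the vertical segment from $(0,0)$ to $(0,v_0)$, the graph of $u$ over $[0,1]$, and the vertical segment from $(1,v_1)$ to $(1,0)$, suitably reparametrized to yield an immersed pseudograph in $W^{1,1}((0,1);\mathbb{R}^2)\cap W^{2,1}_{\mathrm{loc}}((0,1);\mathbb{R}^2)$ with appropriate $0\leq\beta_1<\beta_2\leq 1$. The condition $u\circ\gamma_{0,1}\geq u_0\circ\gamma_{0,1}\geq\gamma_{0,2}$ then gives $\gamma\in B_{\gamma_0}$.

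The key step is the structural decomposition of $u$. Setting $C:=\{u=u_0\}$ and $N:=(0,1)\setminus C$, minimality of $u$ forces $u$ to be affine on each connected component of the open set $N$ (a standard argument: replacing $u$ by the chord produces a smaller concave majorant which still exceeds $u_0$ on such a component). On $C$, $u$ inherits the $W^{2,2}_{\mathrm{loc}}$-regularity of $\gamma_{0,2}\circ\gamma_{0,1}^{-1}$ supplied by the pseudograph hypothesis on $\gamma_0$ wherever $\gamma_{0,1}'>0$, so altogether $u\in W^{2,2}_{\mathrm{loc}}(0,1)$. Moreover $u''=0$ on $N$, while at any Lebesgue density point of $C$, agreement $u=u_0$ on a set of density one forces $u'=u_0'$; the inequality $u\geq u_0$ coupled with $u''\leq 0$ (concavity) then produces $u''\geq u_0''$ a.e.\ on $C$, whence $|u''|\leq|u_0''|$ a.e.\ on $C$.

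For the energy, the vertical segments carry no curvature, so
\[
\mathcal{E}(\gamma)=\int_C\frac{u''(x)^2}{(1+u'(x)^2)^{5/2}}\,dx\leq\int_C\frac{u_0''(x)^2}{(1+u_0'(x)^2)^{5/2}}\,dx\leq\mathcal{E}(\gamma_0),
\]
the last step obtained by reverting to the $t$-parameter on the strictly increasing portion of $\gamma_{0,1}$ and noting that the interior vertical parts of $\gamma_0$ contribute zero curvature energy. The length estimate is a piecewise comparison: the fibers $\gamma_{0,1}^{-1}(\{j\})$ contribute at least $v_j$ to $\mathcal{L}(\gamma_0)$ since $\gamma_0$ there attains both height $0$ and the maximum $v_j$; on each component $(a,b)$ of $N$ the graph of $u$ is the straight segment from $(a,u_0(a))$ to $(b,u_0(b))$---both endpoints attained by $\gamma_0$---whose length is bounded above by the arc length of the corresponding piece of $\gamma_0$; and on $C$ the lengths of the $u$- and $u_0$-graphs coincide almost everywhere. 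Summing gives $\mathcal{L}(\gamma)=v_0+v_1+\int_0^1\sqrt{1+u'^2}\,dx\leq\mathcal{L}(\gamma_0)$, and the combined bound $\mathcal{E}_\epsilon(\gamma)\leq\mathcal{E}_\epsilon(\gamma_0)$ follows immediately.

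The principal obstacle I anticipate is the $W^{2,2}_{\mathrm{loc}}$-regularity of $u$ together with the pointwise second-derivative comparison on $C$. Because $u_0$ can jump at interior $x$-values where $\gamma_0$ has vertical parts, the least concave majorant could in principle develop corners at isolated contact points, which would ruin $u''\in L^2_{\mathrm{loc}}$. Ruling out such pathologies---exploiting the $C^1$-behaviour of $\gamma_0$ at transitions between graph and vertical regimes (a consequence of $\gamma_0\in W^{2,1}_{\mathrm{loc}}\hookrightarrow C^1$) and arguing in the spirit of \cite[Lemma~2.1]{Anna}---will be the technical heart of the proof.
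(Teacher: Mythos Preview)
Your construction and the paper's produce the same curve $\gamma$: the paper obtains it as the minimizer of the ``trace-length'' problem $\inf_{\tau\in M_{\gamma_0}}\mathcal L(\tau)$, shows via a perimeter/convex-hull argument (Ferriero--Fusco) that this minimizer exists and is concave on the top, and then derives the very same contact/affine decomposition you describe (their $U=\{(x,u(x))\notin\gamma_0([0,1])\}$ is your $N$). The proofs of $\mathcal E(\gamma)\le\mathcal E(\gamma_0)$ and $\mathcal L(\gamma)\le\mathcal L(\gamma_0)$ are then essentially identical to yours: $u''=0$ on $N$, while on the contact set $u$ coincides with a local graph reparametrization of $\gamma_0$, and the coarea formula/triangle inequality close the argument. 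So at the level of strategy your proposal matches the paper.

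The genuine methodological difference is how regularity of $u$ is obtained. You go directly for the least concave majorant of the upper envelope $u_0$ and hope to read off $u\in W^{2,2}_{\mathrm{loc}}$ from the structure of $C$ and $N$; the paper instead characterizes $u$ variationally as a length minimizer, extracts a variational inequality, proves $u\in C^1(0,1)$ from it, and then bootstraps to $W^{2,2}_{\mathrm{loc}}$ via a separate obstacle-problem regularity lemma (difference-quotient method). Their route is longer but more robust; yours is more elementary but, as you correctly flag, the hard step is precisely the one you leave open.

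Two concrete points where your sketch needs reinforcement. First, your $u_0$ is only upper semicontinuous, so the assertions ``$u'=u_0'$ and $|u''|\le|u_0''|$ on $C$'' presuppose that at every contact point the maximum in the fiber is attained where $\gamma_{0,1}'>0$, so that $u_0$ is locally a $W^{2,2}$ graph there; the paper proves exactly this (their Proposition on $C^1$-regularity shows $\gamma_{0,1}'(t)>0$ at contact points), and you will need an analogous argument before the Stampacchia-type comparison can be invoked. Second, for the length estimate you need $v_j=u(j)$ to equal $u_0(j)$ (i.e.\ the vertical side of your $\cap$-shape actually lands on $\gamma_0$); for the least concave majorant of a merely upper semicontinuous function this can fail at endpoints, and the paper obtains the corresponding statement $\gamma(\beta_j)\in\gamma_0([0,1])$ from the minimality of $\mathcal L$ rather than from envelope considerations. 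Both issues are surmountable in your framework, but they are not automatic.
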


From this point forward, we deal with $\cap$-shaped pseudographs that are \emph{concave on the top}, i.e. their graph reparametrization $u$ is concave.  
The proof of Theorem \ref{thm:panettone} will rely on the intermediate result that for a fixed $\gamma_0 \in P_\psi$ there exists a $\gamma \in B_{\gamma_0} $ such that 
\begin{equation}\label{eq:tracelength}
\mathcal{L}(\gamma )  = \inf_{\tau \in B_{\gamma_0} } \mathcal{L}(\tau).
\end{equation}
We will call this minimization problem the \textit{Trace-Length} problem.  
The result will be deduced setting up a mesaure theoretic perimeter problem in $\mathbb{R}^2$, which can be solved using \cite{Ferriero}, a result about the convex hull of a finite perimeter set. Regularity of the minimizer also has to be shown. Let us point out that the result pairs up with certain results about the regularity of concave envelopes, examined in \cite{Oberman} and \cite{Kirchheim} to name only two out of many. From that we obtain that for each $\epsilon > 0$ we can find a $\cap$-shaped curve $\gamma^{(\epsilon)}$ such that $\mathcal{E}_\epsilon(\gamma^{(\epsilon)}) = \inf_{\gamma \in P_\psi} \mathcal{E}_\epsilon(\gamma)$. 

Theorem \ref{thm:panettone} paves the way for convex analysis techniques. We will employ these to bound the length of $P_\psi$-minimizers of the penalized functionals uniformly in $\epsilon$. This will turn out to be the main ingredient for the general existence result:

\begin{theorem} (Main Existence Theorem) \label{thm:main}

Let $\psi$ be an admissible obstacle (see Assumption \ref{ass:obst}). Then there exists a $\cap$-shaped pseudograph $\gamma \in P_\psi$ that is concave on the top and 
\begin{equation*}
\mathcal{E}(\gamma) = \inf_{ \tau \in P_\psi} \mathcal{E}(\tau) .
\end{equation*}

Moreover, set $\alpha := \inf_{\gamma \in P_\psi}  \mathcal{E}(\gamma)$ and define
\begin{equation*}
c_0 := \int_{\mathbb{R}} \frac{1}{(1+s^2)^\frac{5}{4}} ds , \quad   G(x) := \int_0^x \frac{1}{(1+s^2)^\frac{5}{4}} ds.
\end{equation*}
Then the following assertions hold true
\begin{enumerate} 
\item $\alpha \leq c_0^2$. 
\item If $\alpha < c_0^2$ then $\gamma$ satisfies 
\begin{align*}
\mathcal{L}(\gamma)&  \leq \max   \Bigg\lbrace  2\left( \sup_{x \in (0,1)} \psi(x) + ||\psi'||_{\infty} \right) + 1  , \\ &  \qquad \qquad \quad G^{-1} \left( \sqrt{\frac{\alpha + c_0^2}{2}} - \frac{c_0}{2} \right)  + \sqrt{1+ G^{-1} \left( \sqrt{\frac{\alpha + c_0^2}{2}} - \frac{c_0}{2} \right)^2 }   \Bigg\rbrace. 
\end{align*}
Additionally, $\gamma$ has a vertical tangent line at at most one of the two boundary points. 
\item In case that $\alpha = c_0^2$, define
\begin{equation}\label{def:Unull}
U_0(x) := \begin{cases} \frac{2}{c_0} \left( 1 + G^{-1} ( \frac{c_0}{2} - c_0 x )^2 \right)^{-\frac{1}{4}} & x \in (0,1) \\ 0 & x = 0,1 \end{cases}
\end{equation}
and $\gamma = \gamma_1 \oplus \gamma_2 \oplus \gamma_3 $ where 
\begin{eqnarray}\label{eq:vglkurve}
\gamma_1(t)   =&  (0 , t )^T  \quad &0 \leq t \leq S, \nonumber  \\
\gamma_2(t)  = &( t ,S +  U_0(t) )^T \quad &0 \leq t \leq 1,   \\
\gamma_3(t)  = &( 1, S - t )^T \quad &0 \leq t \leq S \nonumber, 
\end{eqnarray}
with $S := \sup_{x \in (0,1)} \psi(x) $ and $\oplus$ denoting the concatenation of curves. 
Then $\gamma$ possesses a weak reparametrization (in the sense of Definition \ref{def:d2}) lying in $P_\psi$ that is a minimizer of $\mathcal{E}$ with length
\begin{equation*}
\mathcal{L}(\gamma) = 2 S + \frac{1}{c_0}\int_{\mathbb{R}} \frac{1}{(1+t^2)^\frac{3}{4}} dt. 
\end{equation*}
\end{enumerate}
\end{theorem}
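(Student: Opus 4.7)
My plan is to prove Theorem \ref{thm:main} by a penalization/approximation scheme: minimize $\mathcal{E}_\epsilon$ from \eqref{eq:pen} over $P_\psi$ for each $\epsilon>0$ and let $\epsilon\to 0$. The excerpt has already noted that, for each $\epsilon>0$, a minimizer $\gamma^{(\epsilon)} \in P_\psi$ of $\mathcal{E}_\epsilon$ exists and, by Theorem \ref{thm:panettone}, can be taken to be $\cap$-shaped and concave on the top. The aim is to extract a subsequential limit $\gamma$ of the $\gamma^{(\epsilon)}$ as $\epsilon \to 0$, show that $\gamma \in P_\psi$ retains the $\cap$-shape and concavity, and verify $\mathcal{E}(\gamma)=\alpha$.

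The crux, and the step I expect to be the hard part, is a bound on $\mathcal{L}(\gamma^{(\epsilon)})$ \emph{uniform} in small $\epsilon$, since the penalization weakens in the limit. I would use convex analysis on the concave graph reparametrization $u^{(\epsilon)}$: denoting by $\beta_1^{(\epsilon)}$, $\beta_2^{(\epsilon)}$ the transition points of the pseudograph, the lengths of the two vertical legs are governed by the (possibly infinite) one-sided boundary slopes $(u^{(\epsilon)})'(0^+)$ and $(u^{(\epsilon)})'(1^-)$. A slope–energy trade-off, saturated by $U_0$, bounds these slopes in terms of $\mathcal{E}(u^{(\epsilon)})$; concretely, if both boundary slopes were infinite then $\mathcal{E}(u^{(\epsilon)}) \geq c_0^2$. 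Combining this with a geometric bound that takes over when $\psi$ or $\|\psi'\|_\infty$ drives the length yields the two-term maximum in (2), and in particular, if $\alpha < c_0^2$, then $\mathcal{L}(\gamma^{(\epsilon)})$ is uniformly bounded for small $\epsilon$ and $\gamma^{(\epsilon)}$ has a vertical tangent line at at most one endpoint.

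With uniform length in hand, I arclength-reparametrize each $\gamma^{(\epsilon)}$ and rescale to $[0,1]$; bounded length plus bounded $\mathcal{E}$ gives a uniform $W^{2,2}((0,1);\mathbb{R}^2)$ bound. Arzelà–Ascoli and weak $W^{2,2}$ compactness then yield a subsequential limit $\gamma$; the pseudograph structure, the obstacle inequality, and concavity on the top all pass to the limit (the last two by uniform convergence). For minimality, take any $\tau \in P_\psi$ with $\mathcal{E}(\tau)<\infty$; Theorem \ref{thm:panettone} replaces $\tau$ by a $\cap$-shaped competitor of no larger energy and of finite length, and the chain $\mathcal{E}(\gamma) \leq \liminf_\epsilon \mathcal{E}(\gamma^{(\epsilon)}) \leq \liminf_\epsilon \mathcal{E}_\epsilon(\tau) = \mathcal{E}(\tau)$ yields $\mathcal{E}(\gamma) \leq \alpha$.

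For (1), testing the explicit competitor \eqref{eq:vglkurve} proves $\alpha \leq c_0^2$ directly: $\gamma_1,\gamma_3$ contribute nothing to $\mathcal{E}$, and a short calculation based on $U_0'(x)=G^{-1}(c_0/2-c_0 x)$ (whence $U_0''(x)=-c_0\,(1+U_0'(x)^2)^{5/4}$) shows $\mathcal{E}(U_0)=c_0^2$. For (3), when $\alpha=c_0^2$ this same competitor is the minimizer up to weak reparametrization; its length arises from the substitution $y=U_0'(x)$ in $\int_0^1\sqrt{1+U_0'(x)^2}\,dx$, which converts it to $\tfrac{1}{c_0}\int_\mathbb{R}(1+t^2)^{-3/4}\,dt$, plus $2S$ for the vertical legs. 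Identification of $\gamma$ with the explicit form \eqref{eq:vglkurve} in this borderline case should be the rigidity instance of the slope–energy inequality used in (2).
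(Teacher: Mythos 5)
Your proposal follows essentially the same route as the paper: penalized minimizers made $\cap$-shaped and concave on the top via Theorem \ref{thm:panettone}, a uniform length bound obtained by the slope--energy (i.e. $G$-oscillation) estimate of Proposition \ref{prop:p1} ruling out two infinite boundary slopes and splitting into the one-sided-graph bound versus the graph-touching-the-obstacle bound (giving the two-term maximum), then constant-speed reparametrization, weak $W^{2,2}$ compactness and lower semicontinuity for minimality, with the explicit $U_0$-curve \eqref{eq:vglkurve} handling (1) and (3). The only superfluous element is the suggested ``rigidity'' identification of the limit with \eqref{eq:vglkurve} when $\alpha=c_0^2$: as in the paper, it suffices that this explicit curve is admissible with energy $c_0^2$, so it is itself a minimizer and no identification is needed.
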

At this point there remains an open question: How can we find conditions that ensure that the minimizer is a graph? We know that in the case $\alpha < c_0^2$ at most one out of the two slopes at the boundary is infinite. For symmetric obstacles, we suspect that we can find a symmetric minimizer. Such a result would assure the graph property in the case $\alpha < c_0^2$. 
 \section{Preliminaries}
In this section we will derive or mention some results and easy estimates that will be useful for the work ahead. 

\subsection{Some Basic Facts in the Graph Case}
All the results listed here are proven in the appendix or in \cite{Anna}.

\begin{prop}\label{prop:p1} (Energy and oscillation of the derivative) 

Fix $u \in W^{2,2}(0,1)$. Then for each $b_1,b_2 \in [0,1]$ it holds that
\begin{equation*}
\mathcal{E}(u) \geq (G(u'(b_2)) - G(u'(b_1)))^2, 
\end{equation*}
where $G$ is defined as in Theorem \ref{thm:main}. 
\end{prop}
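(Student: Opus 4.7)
The plan is to recognize $G(u'(\cdot))$ as an antiderivative of the integrand that naturally appears in the elastic energy. Indeed, $G'(s) = (1+s^2)^{-5/4}$, so by the chain rule
\begin{equation*}
\frac{d}{dx} G(u'(x)) = \frac{u''(x)}{(1+u'(x)^2)^{5/4}}.
\end{equation*}
Since $W^{2,2}(0,1)$ embeds continuously into $C^1([0,1])$ in one dimension, $u'(b_1)$ and $u'(b_2)$ are well-defined pointwise values, and the fundamental theorem of calculus (applicable since $G \circ u' \in W^{1,1}(0,1)$) yields
\begin{equation*}
G(u'(b_2)) - G(u'(b_1)) = \int_{b_1}^{b_2} \frac{u''(x)}{(1+u'(x)^2)^{5/4}}\, dx.
\end{equation*}

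From here I would apply the Cauchy--Schwarz inequality, writing the integrand as $1 \cdot \frac{u''(x)}{(1+u'(x)^2)^{5/4}}$, to obtain
\begin{equation*}
\bigl(G(u'(b_2)) - G(u'(b_1))\bigr)^2 \leq |b_2 - b_1|\int_{b_1}^{b_2} \frac{u''(x)^2}{(1+u'(x)^2)^{5/2}}\, dx.
\end{equation*}
Since $b_1, b_2 \in [0,1]$ we have $|b_2 - b_1| \leq 1$, and the integral over $[b_1,b_2]$ is bounded by the integral over $[0,1]$ because the integrand is non-negative. Both estimates combine to give $\bigl(G(u'(b_2)) - G(u'(b_1))\bigr)^2 \leq \mathcal{E}(u)$, as claimed.

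There is no real obstacle here; the only point to justify carefully is that $G \circ u'$ is absolutely continuous so that the fundamental theorem of calculus applies, which follows from $u' \in C([0,1])$ together with the local Lipschitz continuity of $G$ (its derivative is bounded by $1$). The bound $|b_2 - b_1|\leq 1$ is what restricts this specific formulation to the fixed interval $(0,1)$; on a general interval $(a,b)$ one would pick up a factor of $(b-a)$.
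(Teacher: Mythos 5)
Your proof is correct and follows essentially the same route as the paper: both recognize $\frac{u''}{(1+u'^2)^{5/4}}$ as the derivative of $G\circ u'$ (justified via the Sobolev chain rule / absolute continuity), and both apply the Cauchy--Schwarz inequality with the constant function $1$ as the second factor, using $|b_2-b_1|\leq 1$. The only difference is cosmetic ordering: the paper applies Cauchy--Schwarz on $[0,1]$ first and then restricts to $[b_1,b_2]$, while you integrate over $[b_1,b_2]$ first and then enlarge the domain.
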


The following result is taken from \cite[Lemma 2.4]{Anna}.

\begin{prop}\label{prop:p3} (An upper bound for the least possible energy) 

Let $G_\psi$ be defined as in \eqref{eq:graph set}. Then 
\begin{equation*}
\inf_{u \in G_\psi} \mathcal{E}(u) \leq c_0^2. 
\end{equation*} 
\end{prop}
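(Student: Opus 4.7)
The strategy is to exhibit an explicit family $(u_\epsilon)_{\epsilon > 0} \subset G_\psi$ with $\mathcal{E}(u_\epsilon) \to c_0^2$ as $\epsilon \to 0^+$. Using the identity
\begin{equation*}
\mathcal{E}(u) = \int_0^1 \left(\frac{d}{dx} G(u'(x))\right)^2 dx, \qquad G'(t) = (1+t^2)^{-5/4},
\end{equation*}
one sees that the bound in Proposition~\ref{prop:p1} is saturated by curves for which $G(u')$ is affine in $x$. The borderline curve $G(u'(x)) = G(\alpha)(1-2x)$ with $\alpha \to \infty$ has vertical tangents at $x = 0, 1$ (it coincides with $U_0$) and so is not admissible as a $W^{2,2}$-graph; the construction below repairs it by attaching short linear segments near the endpoints.

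Set $S := \sup_{[0,1]} \psi$ and $C := \|\psi'\|_\infty$. For $\epsilon \in (0, 1/2)$ choose $\alpha_\epsilon := \max(C, S/\epsilon)$ and define $u_\epsilon$ piecewise by $u_\epsilon(x) = \alpha_\epsilon x$ on $[0,\epsilon]$, $u_\epsilon(x) = \alpha_\epsilon(1-x)$ on $[1-\epsilon, 1]$, and on the middle piece $[\epsilon, 1-\epsilon]$
\begin{equation*}
u_\epsilon(x) := \alpha_\epsilon \epsilon + \int_\epsilon^x G^{-1}\!\left(G(\alpha_\epsilon)\left(1 - \frac{2(s-\epsilon)}{1-2\epsilon}\right)\right) ds.
\end{equation*}
Since the argument of $G^{-1}$ lies in $[-G(\alpha_\epsilon), G(\alpha_\epsilon)] \subset (-c_0/2, c_0/2)$, the inversion is well-defined.

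I would then verify in order: (i) $u_\epsilon \in W^{2,2}(0,1)$ with $u_\epsilon(0) = u_\epsilon(1) = 0$; (ii) $u_\epsilon \geq \psi$ on $[0,1]$; (iii) $\mathcal{E}(u_\epsilon) \to c_0^2$. For (i), both $u_\epsilon$ and $u_\epsilon'$ match continuously across the junctions at $x = \epsilon$ and $x = 1 - \epsilon$ (the slopes $\pm \alpha_\epsilon$ agree by construction, and $u_\epsilon(1-\epsilon) = \alpha_\epsilon \epsilon$ follows from oddness of the middle integrand about $x = 1/2$), while $u_\epsilon''$ is bounded on each of the three pieces; hence $u_\epsilon \in W^{2,\infty}(0,1) \subset W^{2,2}(0,1)$. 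For (iii), the linear pieces contribute nothing, and on the middle piece $\tfrac{d}{dx} G(u_\epsilon') \equiv -\tfrac{2G(\alpha_\epsilon)}{1-2\epsilon}$, yielding
\begin{equation*}
\mathcal{E}(u_\epsilon) = \frac{4 G(\alpha_\epsilon)^2}{1-2\epsilon} \xrightarrow[\epsilon \to 0^+]{} 4\left(\frac{c_0}{2}\right)^2 = c_0^2.
\end{equation*}

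The main obstacle is (ii), where the Lipschitz hypothesis on $\psi$ is essential. On $[0, \epsilon]$, $\psi(x) \leq \psi(0) + Cx \leq Cx \leq \alpha_\epsilon x = u_\epsilon(x)$, using $\psi(0) < 0$ and $\alpha_\epsilon \geq C$; the symmetric estimate handles $[1-\epsilon, 1]$. On $[\epsilon, 1-\epsilon]$, the affine $G(u_\epsilon')$ is strictly decreasing, so $u_\epsilon'$ is decreasing and $u_\epsilon$ is concave there with endpoint values $\alpha_\epsilon \epsilon \geq S \geq \psi$; concavity then implies $u_\epsilon \geq \alpha_\epsilon \epsilon \geq \psi$ throughout $[\epsilon, 1-\epsilon]$. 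Combining (i)--(iii) yields $u_\epsilon \in G_\psi$ with $\mathcal{E}(u_\epsilon) \to c_0^2$, hence $\inf_{u \in G_\psi} \mathcal{E}(u) \leq c_0^2$.
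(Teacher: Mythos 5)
Your proof is correct. Note that the paper does not prove this proposition itself but imports it from \cite[Lemma 2.4]{Anna}; your argument supplies a self-contained proof along what is essentially the natural (and, in substance, the same) route: you regularize the borderline profile $U_0$ of \eqref{def:Unull} --- the graph for which $G(u')$ is affine and runs from $+\tfrac{c_0}{2}$ to $-\tfrac{c_0}{2}$, whose energy is exactly $c_0^2$ as recomputed in Lemma~\ref{lem:bigenergy} --- by capping the slope at $\alpha_\epsilon$ and attaching linear ramps so that the competitor stays in $W^{2,2}(0,1)$ and clears the obstacle. All the verifications check out: the slopes and values match at the junctions (the middle integrand is odd about $x=\tfrac12$), the energy computation $\mathcal{E}(u_\epsilon)=\tfrac{4G(\alpha_\epsilon)^2}{1-2\epsilon}\to c_0^2$ is right, and the choice $\alpha_\epsilon=\max(C,S/\epsilon)$ handles the obstacle on all three pieces. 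The only step worth making explicit is that Assumption~\ref{ass:obst} (bounded one-sided derivatives at every interior point of a continuous function) does imply the global Lipschitz bound $|\psi(x)-\psi(0)|\leq Cx$ that you use on $[0,\epsilon]$; this is a standard fact but is not literally the stated hypothesis.
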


\begin{prop}\label{prop:p5} \label{prop:p6} \label{prop:p4}(Euler-Lagrange equation, properties of a minimizer)
 
Let $u \in G_\psi$ be such that 
\begin{equation*}
\mathcal{E}(u) = \inf_{w \in G_\psi} \mathcal{E}(w). 
\end{equation*}
Define 
\begin{equation*}
v(x) := \frac{u''(x) }{(1 + u'(x)^2)^\frac{5}{4}} = \frac{d}{dx} (G \circ u')(x). 
\end{equation*}
Then the following statements are true:
\begin{enumerate}
\item The minimizer $u$ is concave and $u \in C^2([0,1])$.
\item Away from the coincidence set, $u$ is smooth, that is $u \in C^\infty( \overline{\{ x \in (0,1) | u(x) > \psi(x) \}} )$ and  
\begin{equation*}
- \frac{v''(x)}{(1 + u'(x)^2)^\frac{3}{4}} + \frac{5}{2} \frac{\kappa(x) u'(x)}{(1 + u'(x)^2)^\frac{1}{4}} v'(x) = 0 \quad \forall x \in (0,1) : u(x) > \psi(x). 
\end{equation*}
\item If $u > \psi $ on some interval $(a,b)$ then there exists $C =C(a,b) \in \mathbb{R}$ such that 
\begin{equation*}
\frac{v'(x)}{(1 + u'(x)^2)^\frac{5}{4}} = C \quad \forall x \in (a,b) . 
\end{equation*}
\item $v(0) = v(1) = 0$. 
\item If $u > \psi $ on $[a,b] $ then 
\begin{equation*}
\max_{x \in [a,b] } | v(x) | = \max \{ |v(a)  | , |v(b)| \}. 
\end{equation*}
\item $v$ is decreasing in a neighborhood of $x_0 = 0$ and increasing in a neighborhood of $x_1 = 1$.
\item There is $x_0 \in (0,1) $ such that $u(x_0) = \psi(x_0)$. 
\end{enumerate}
\end{prop}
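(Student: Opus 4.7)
The plan is to replace $u$ by its concave envelope $\hat{u} := \inf\{f : [0,1] \to \mathbb{R} \text{ concave}, f \geq u\}$. Using the convex-combination characterization of $\hat{u}$, the boundary values $u(0) = u(1) = 0$ propagate to $\hat{u}(0) = \hat{u}(1) = 0$, so $\hat{u} \in G_\psi$. Following the scheme of [Anna, Lemma 2.1], on the open set $\{\hat{u} > u\}$ the envelope $\hat{u}$ is affine on each connected component and so contributes nothing to $\mathcal{E}(\hat{u})$, while on $\{\hat{u} = u\}$ one has $\hat{u}'' = u''$ almost everywhere, yielding $\mathcal{E}(\hat{u}) \leq \mathcal{E}(u)$; minimality forces $u \equiv \hat{u}$. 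The $C^2$-regularity then follows from the Euler--Lagrange equation of part (2) on $\{u > \psi\}$ together with the first-order matching of $u$ and $\psi$ on the coincidence set, a consequence of concavity combined with the $W^{2,2}$-regularity.

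\textbf{Parts (2)--(4): Euler--Lagrange and a first integral.} On any component $(a,b)$ of $\{u > \psi\}$, compactly supported variations in both signs are admissible, and the weak Euler--Lagrange equation reads $\int_a^b (F_{u'} \varphi' + F_{u''} \varphi'')\, dx = 0$ for every $\varphi \in C_c^\infty(a,b)$, where $F(p,q) = q^2/(1+p^2)^{5/2}$. Since $F$ is independent of $u$, this integrates once to the first-integral identity $F_{u'} - \tfrac{d}{dx}F_{u''} = \mathrm{const}$; a direct computation with $v = u''/(1+(u')^2)^{5/4}$ converts it into $v'/(1+(u')^2)^{5/4} = C$, which is exactly (3). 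Differentiating once more recasts the equation into the form displayed in (2). The identity $v(0) = v(1) = 0$ in (4) is the natural (transversality) boundary condition $F_{u''}\bigl|_{0,1} = 0$, arising since no constraint is imposed on $u'$ at the endpoints.

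\textbf{Parts (5)--(7) and main difficulty.} From (3), $v' = C(1+(u')^2)^{5/4}$ has constant sign on $[a,b] \subset \{u > \psi\}$, so $v$ is monotone and attains its extremum at an endpoint, giving (5). For (6), Assumption~\ref{ass:obst} yields $\psi(0) < 0 = u(0)$, so $u > \psi$ on some $[0,\delta]$; then (3)--(4) give a monotone $v$ with $v(0) = 0$, while concavity forces $v \leq 0$, ruling out $v$ being increasing near $0$, and the mirror argument handles $x=1$. For (7), if $u > \psi$ on all of $(0,1)$ then (3) and (4) force a monotone $v$ with $v(0) = v(1) = 0$, hence $v \equiv 0$, $u'' \equiv 0$, and by the endpoint conditions $u \equiv 0$, contradicting $u \geq \psi$ with $\max \psi > 0$. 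The hardest step will clearly be part (1): making the envelope inequality $\mathcal{E}(\hat{u}) \leq \mathcal{E}(u)$ rigorous for merely $W^{2,2}$ data, and then upgrading $W^{2,2}$-concavity to $C^2$-regularity across the coincidence set; all remaining parts then follow quite mechanically from the first integral (3) and the natural boundary conditions (4).
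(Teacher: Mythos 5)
Your overall architecture matches the paper's, which for most items simply defers to Dall'Acqua--Deckelnick: concavity via the concave-envelope comparison of [Anna, Lemma 2.1], the Euler--Lagrange equation and the natural boundary condition $v(0)=v(1)=0$ from two-sided compactly supported variations on $\{u>\psi\}$ and near the endpoints, and (7) by the $v\equiv 0 \Rightarrow u\equiv 0$ contradiction. Two of your choices differ in a harmless and arguably cleaner way: you obtain (3) directly as the DuBois--Reymond first integral of the weak Euler--Lagrange equation (indeed $\frac{d}{dx}F_{u''}-F_{u'}=2v'/(1+u'^2)^{5/4}$, so the first integral \emph{is} (3)) and then get (2) by differentiating, whereas the paper derives (2) first from [Anna, Prop.\ 3.2] and then checks by computation that $v'/(1+u'^2)^{5/4}$ has vanishing derivative; and you deduce (5) from the monotonicity of $v$ implied by (3), whereas the paper invokes the maximum principle for the ODE in (2). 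Both routes are correct; yours avoids the maximum principle entirely. Parts (6) and (7) are argued exactly as in the paper.

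The one genuine gap is the $C^2([0,1])$ claim in (1). Your proposed justification --- Euler--Lagrange regularity on $\{u>\psi\}$ plus ``first-order matching'' of $u$ and $\psi$ on the coincidence set --- at best yields $u\in C^1([0,1])$ globally together with smoothness off the coincidence set; it says nothing about continuity of $u''$ \emph{on} the coincidence set or across the free boundary, and for an obstacle that is merely Lipschitz with one-sided derivatives (Assumption \ref{ass:obst}) this is precisely the nontrivial content. The paper does not reprove it either but cites [Anna, Theorem 5.1], where the point is to show that the measure appearing in the distributional Euler--Lagrange inequality has no atoms, so that $v=(G\circ u')'$ (equivalently $u''$) admits a continuous representative up to the contact set. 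You correctly flag this as the hardest step, but as written your sketch does not close it; either cite [Anna, Theorem 5.1] as the paper does, or supply the no-atoms argument for the contact measure.
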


\begin{prop}\label{prop:p2} (Graphs with finite energy)
 
Let $u \in W^{2,1}_{loc}(0,1) $ be such that $u ' \in L^\infty(0,1)$ and $\mathcal{E}_\epsilon(u) < \infty $ for some $\epsilon \geq 0 $, where $\mathcal{E}_\epsilon$ is defined as in \eqref{eq:pen}. Then $u \in W^{2,2}(0,1).$ 
\end{prop}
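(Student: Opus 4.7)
The statement is essentially a one-line algebraic estimate, so the plan is mostly about confirming that the local hypothesis and the boundary do not introduce subtleties. First I would make sure $\mathcal{E}(u)$ is a well-defined (and by hypothesis finite) integral over all of $(0,1)$. The assumption $u \in W^{2,1}_{loc}(0,1)$ guarantees that $u''$ exists a.e.\ on $(0,1)$ as a measurable function (since it does so on every compact subinterval), so the integrand
\[
\frac{u''(x)^2}{(1+u'(x)^2)^{5/2}}
\]
is a nonnegative measurable function on $(0,1)$, and $\mathcal{E}(u)\le \mathcal{E}_\epsilon(u)<\infty$ by the nonnegativity of the length term.

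Second, I would exploit the $L^\infty$-bound on the derivative. Setting $M:=\|u'\|_{L^\infty(0,1)}$, the pointwise inequality
\[
\frac{1}{(1+u'(x)^2)^{5/2}}\;\ge\;\frac{1}{(1+M^2)^{5/2}}\qquad\text{for a.e. }x\in(0,1)
\]
holds. Multiplying by $u''(x)^2$ and integrating gives
\[
\int_0^1 u''(x)^2\,dx \;\le\; (1+M^2)^{5/2}\int_0^1 \frac{u''(x)^2}{(1+u'(x)^2)^{5/2}}\,dx \;=\;(1+M^2)^{5/2}\,\mathcal{E}(u) \;<\;\infty,
\]
so that $u''\in L^2(0,1)$.

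Finally, since $(0,1)$ has finite measure, $u'\in L^\infty(0,1)\subset L^2(0,1)$, and $u$ itself, being Lipschitz as the primitive of a bounded function on a bounded interval, also lies in $L^2(0,1)$. Combining these three facts yields $u\in W^{2,2}(0,1)$, as desired. There is no genuine obstacle here: the $\epsilon$ plays no role beyond ensuring $\mathcal{E}(u)<\infty$, and the $W^{2,1}_{loc}$ hypothesis is used only to make $u''$ a measurable function on the whole interval so that the global $L^2$ bound above has content.
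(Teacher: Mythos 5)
Your argument is correct and is essentially identical to the paper's proof: bound the denominator below using $\|u'\|_\infty$ to deduce $u''\in L^2(0,1)$, with the remaining memberships $u,u'\in L^2(0,1)$ being immediate. No further comment is needed.
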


\subsection{Elementary Examination of the Penalized Functional}
In what follows, we will examine $\mathcal{E}_\epsilon$, see \eqref{eq:pen}, using the very same methods as in Section 2 and 3 of \cite{Anna}. The computations will be provided in the appendix. 
\begin{prop}\label{prop:p7} (Energy for constant-velocity curves)

 Let $\gamma \in W^{2,1}_{loc}((0,1);\mathbb{R}^2) \cap W^{1,1}((0,1);\mathbb{R}^2)$ be immersed and $\epsilon > 0 $ be such that $\mathcal{E}_\epsilon(\gamma) < \infty$. Then the constant velocity reparametrization of $\gamma$ lies in $W^{2,2}((0,1);\mathbb{R}^2) $ and satisfies
\begin{equation*}
\mathcal{E}_\epsilon(\gamma) = \frac{1}{\mathcal{L}(\gamma)^3} \int_0^1 |\gamma''|^2 dt + \epsilon \mathcal{L}(\gamma). 
\end{equation*}
\end{prop}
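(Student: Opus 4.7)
The plan is to exploit the fact that both $\mathcal{E}$ and $\mathcal{L}$ are geometric quantities, so they are invariant under reparametrization, and then to compute what $\mathcal{E}_\epsilon$ looks like once $|\gamma'|$ is constantly equal to the length $L := \mathcal{L}(\gamma)$. First I would note that $\mathcal{E}_\epsilon(\gamma)<\infty$ together with $\epsilon>0$ gives $L<\infty$ and $\mathcal{E}(\gamma)<\infty$. Since $\gamma$ is immersed there exists $\epsilon_0>0$ with $|\gamma'|\geq \epsilon_0$ on $(0,1)$, so the arclength function $s(t):=\int_0^t|\gamma'(\tau)|\,d\tau$ is a bilipschitz homeomorphism from $[0,1]$ onto $[0,L]$. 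Rescaling by a factor $L$, set $\phi:=s^{-1}(L\,\cdot)$ and define the constant-velocity reparametrization $\tilde\gamma(t):=\gamma(\phi(t))$ for $t\in[0,1]$; by the chain rule in the Lipschitz setting one obtains $|\tilde\gamma'|=L$ almost everywhere and in particular $\tilde\gamma\in W^{1,\infty}((0,1);\mathbb{R}^2)$.

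Next I would verify the $W^{2,2}$-regularity. Writing $\gamma_L(\sigma):=\gamma(s^{-1}(\sigma))$ for the arclength parametrization (which is possible because $\gamma\in W^{2,1}_{loc}$ and $\phi$ is bilipschitz), one has $|\gamma_L'|\equiv 1$ and $|\gamma_L''(\sigma)|=\kappa(\sigma)$ a.e.\ in $(0,L)$. Since $\tilde\gamma(t)=\gamma_L(Lt)$, the chain rule yields $\tilde\gamma''(t)=L^2\gamma_L''(Lt)$, and hence the change of variables
\begin{equation*}
\int_0^1|\tilde\gamma''(t)|^2\,dt = L^3\int_0^L \kappa(\sigma)^2\,d\sigma = L^3\,\mathcal{E}(\gamma)<\infty.
\end{equation*}
This gives $\tilde\gamma''\in L^2((0,1);\mathbb{R}^2)$ and therefore $\tilde\gamma\in W^{2,2}((0,1);\mathbb{R}^2)$.

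It remains to derive the claimed identity. Since both $\mathcal{E}$ and $\mathcal{L}$ are geometric, $\mathcal{E}_\epsilon(\tilde\gamma)=\mathcal{E}_\epsilon(\gamma)$. On the other hand, constant speed $|\tilde\gamma'|\equiv L$ implies $\langle\tilde\gamma',\tilde\gamma''\rangle=0$ a.e., so $\tilde\gamma''$ is parallel to $N_{\tilde\gamma}$ and $\langle\tilde\gamma'',N_{\tilde\gamma}\rangle^2=|\tilde\gamma''|^2$. Plugging this into the formula from the introduction,
\begin{equation*}
\mathcal{E}(\tilde\gamma)=\int_0^1\frac{\langle\tilde\gamma'',N_{\tilde\gamma}\rangle^2}{|\tilde\gamma'|^3}\,dt = \frac{1}{L^3}\int_0^1|\tilde\gamma''|^2\,dt,\qquad \mathcal{L}(\tilde\gamma)=L,
\end{equation*}
and adding $\epsilon L$ yields the stated expression.

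The only genuinely subtle point is the regularity step: making sure that the composition $\gamma\circ\phi$ inherits two weak derivatives even though $\gamma$ is only locally $W^{2,1}$. The bilipschitz property of $\phi$ (coming from $\epsilon_0\leq|\gamma'|\leq s'\leq\|\gamma'\|_{L^1}/\epsilon_0$ away from zero on any compact subinterval, combined with the uniform lower bound) is what makes the chain rule legitimate here, so I would spend the main care spelling this out on compact subintervals of $(0,1)$ and then passing to the limit using the already-proved $L^2$-estimate on $\tilde\gamma''$.
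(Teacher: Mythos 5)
Your proof is correct and follows essentially the same route as the paper: the key point in both is that constant speed forces $\langle\tilde\gamma'',\tilde\gamma'\rangle=0$, so $\langle\tilde\gamma'',N\rangle^2=|\tilde\gamma''|^2$ and the energy becomes $\mathcal{L}(\gamma)^{-3}\int_0^1|\tilde\gamma''|^2\,dt$, with the $W^{2,2}$-regularity read off from finiteness of this quantity. You are merely more explicit than the paper about constructing the constant-velocity reparametrization via the (locally bilipschitz) arclength function, which the paper takes for granted.
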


\begin{prop}\label{prop:p8} (Existence of a minimizer) 

For each $\epsilon >0 $, $\mathcal{E}_\epsilon$ admits a minimizer in $P_\psi$ (see \eqref{eq:Pseudo} and Definition \ref{prop:p10}). 
\end{prop}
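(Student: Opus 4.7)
The plan is a direct method for $\mathcal{E}_\epsilon$, where the $\epsilon$-penalization resolves precisely the length-escape phenomenon illustrated in Figure \ref{fig:3}. I start with a minimizing sequence $(\gamma_n)\subset P_\psi$. From $\epsilon\mathcal{L}(\gamma_n)\leq \mathcal{E}_\epsilon(\gamma_n)\leq C$ I obtain a uniform upper bound on the lengths, while the boundary conditions $\gamma_n(0)=(0,0)^T$, $\gamma_n(1)=(1,0)^T$ force $\mathcal{L}(\gamma_n)\geq 1$. Since $\mathcal{E}_\epsilon$ is invariant under orientation-preserving reparametrization, I may replace each $\gamma_n$ by its constant-velocity reparametrization, which by Proposition \ref{prop:p7} lies in $W^{2,2}((0,1);\mathbb{R}^2)$, remains in $P_\psi$, and satisfies $|\gamma_n'|\equiv\mathcal{L}(\gamma_n)\in[1,C/\epsilon]$ together with
\begin{equation*}
\int_0^1 |\gamma_n''|^2\,dt=\mathcal{L}(\gamma_n)^3\bigl(\mathcal{E}_\epsilon(\gamma_n)-\epsilon\mathcal{L}(\gamma_n)\bigr).
\end{equation*}
This bounds $\|\gamma_n\|_{W^{2,2}}$ uniformly.

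Next I extract a subsequence (not relabelled) such that $\gamma_n\rightharpoonup\gamma$ weakly in $W^{2,2}$ and strongly in $C^1([0,1])$ via the compact embedding $W^{2,2}(0,1)\hookrightarrow C^1([0,1])$, and such that $\mathcal{L}(\gamma_n)\to L\geq 1$. The uniform $C^1$ convergence transfers to $\gamma$ the boundary conditions, the monotonicity $\gamma_1'\geq 0$, the obstacle inequality $\gamma_2\geq\psi\circ\gamma_1$, and the identity $|\gamma'|\equiv L$, so in particular $\gamma$ is immersed and has constant speed $L$.

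The main obstacle is verifying that the graph-reparametrization requirement in Definition \ref{prop:p10} survives the limit. Fix an open interval $(a,b)\subset(0,1)$ on which $\gamma_1'>0$. Continuity of $\gamma_1'$ (from $\gamma\in W^{2,2}$) yields $\gamma_1'\geq\delta>0$ on any compact $[c,d]\subset(a,b)$, so that $\gamma_1\colon[c,d]\to\gamma_1([c,d])$ is a bi-Lipschitz change of variables. A routine chain-rule computation, using $\gamma\in W^{2,2}$ and the positive lower bound on $\gamma_1'$, shows $\gamma_2\circ\gamma_1^{-1}\in W^{2,2}(\gamma_1([c,d]))$, and by exhausting $(a,b)$ with such intervals we get $\gamma_2\circ\gamma_1^{-1}\in W^{2,2}_{loc}(\gamma_1(a),\gamma_1(b))$. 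Therefore $\gamma\in P_\psi$.

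Finally I invoke lower semicontinuity. Since $\mathcal{L}(\gamma_n)\to L=\mathcal{L}(\gamma)$ and the functional $\gamma\mapsto\int_0^1|\gamma''|^2\,dt$ is weakly lower semicontinuous on $W^{2,2}$, Proposition \ref{prop:p7} applied to both $\gamma_n$ and $\gamma$ yields
\begin{equation*}
\mathcal{E}_\epsilon(\gamma)=\frac{1}{L^3}\int_0^1|\gamma''|^2\,dt+\epsilon L\leq\liminf_{n\to\infty}\mathcal{E}_\epsilon(\gamma_n)=\inf_{P_\psi}\mathcal{E}_\epsilon,
\end{equation*}
so $\gamma$ is the desired minimizer.
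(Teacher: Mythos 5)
Your proposal is correct and follows essentially the same route as the paper: a constant-velocity minimizing sequence, the uniform $W^{2,2}$ bound from Proposition \ref{prop:p7} together with the length bound $\epsilon\mathcal{L}(\gamma_n)\leq\mathcal{E}_\epsilon(\gamma_n)$, weak $W^{2,2}$ compactness with strong $C^1$ convergence to transfer the constraints and immersedness, a chain-rule verification of the local graph regularity, and weak lower semicontinuity to conclude. The only difference is that the paper writes out in full the "routine chain-rule computation" you defer for the $W^{2,2}_{loc}$ regularity of $\gamma_2\circ\gamma_1^{-1}$.
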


The following result is an analogue of Proposition \ref{prop:p4} (1) in the penalized case. 

\begin{prop}\label{prop:concavity} (Concavity of a graph minimizer) 

Assume that there is $u \in G_\psi$ such that 
\begin{equation*}
\mathcal{E}_\epsilon(u ) = \inf_{v \in G_\psi} \mathcal{E}_\epsilon(v) .
\end{equation*}
Then $u$ is concave. 
\end{prop}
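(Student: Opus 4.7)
The plan is to show that if $u$ is not already concave, its concave envelope yields a strictly better competitor in $G_\psi$, contradicting minimality. Let $\tilde u$ denote the concave envelope of $u$, i.e.\ the smallest concave function on $[0,1]$ with $\tilde u \geq u$. Standard properties of the envelope give that $\tilde u$ is continuous, agrees with $u$ on the closed contact set $\mathcal{C} := \{\tilde u = u\}$, is affine on each connected component of $[0,1]\setminus \mathcal{C}$, and satisfies $\tilde u(0) = u(0) = 0$, $\tilde u(1) = u(1) = 0$ (since a convex-combination argument forces any concave dominating function to equal $0$ at endpoints). Clearly $\tilde u \geq u \geq \psi$. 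To see $\tilde u \in W^{2,2}(0,1)$ one notes that $\tilde u'' \equiv 0$ on the affine pieces, while on $\mathcal{C}$ one has $\tilde u' = u'$ at density points; hence $\tilde u'' = u'' \chi_{\mathcal{C}}$ a.e., and in particular $\|\tilde u''\|_{L^2} \leq \|u''\|_{L^2}$. Thus $\tilde u \in G_\psi$.

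Next I would compare the two ingredients of $\mathcal{E}_\epsilon$ separately. For the elastic part, the observation above that $\tilde u'' = u'' \chi_{\mathcal C}$ while $\tilde u' = u'$ on $\mathcal C$ yields $\mathcal E(\tilde u) \leq \mathcal E(u)$, exactly as in \cite[Lemma 2.1]{Anna}. For the length, on each maximal affine component $(a,b)\subset [0,1]\setminus\mathcal C$ the restriction of $\tilde u$ is the chord through $(a,u(a))$ and $(b,u(b))$, so by the strict convexity of $t \mapsto \sqrt{1+t^2}$ and Jensen's inequality applied to $u'$,
\begin{equation*}
\int_a^b \sqrt{1+\tilde u'(x)^2}\, dx = (b-a)\sqrt{1+\Bigl(\tfrac{1}{b-a}\textstyle\int_a^b u'\Bigr)^2} \leq \int_a^b \sqrt{1+u'(x)^2}\, dx,
\end{equation*}
with strict inequality unless $u'$ is a.e.\ constant on $(a,b)$, i.e.\ unless $u \equiv \tilde u$ on $[a,b]$. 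Summing over components gives $\mathcal{L}(\tilde u) \leq \mathcal{L}(u)$, with equality precisely when $\mathcal{C} = [0,1]$.

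Putting these together, $\mathcal E_\epsilon(\tilde u) \leq \mathcal E_\epsilon(u)$, with strict inequality if $\mathcal C \neq [0,1]$. Since $u$ minimizes $\mathcal E_\epsilon$ on $G_\psi$, one must have $\mathcal C = [0,1]$, i.e.\ $u = \tilde u$, so $u$ is concave. The main technical step is the regularity claim $\tilde u \in W^{2,2}(0,1)$; once the contact-set decomposition and the a.e.\ identities for $\tilde u', \tilde u''$ are in hand, the rest reduces to the two one-line comparisons above.
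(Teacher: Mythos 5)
Your proof is correct and follows essentially the same route as the paper: replace $u$ by its concavification (the construction of \cite[Lemma 2.1]{Anna}, with the same contact-set identities $\tilde u' = u'$ on $\mathcal{C}$ and $\tilde u'' = 0$ off $\mathcal{C}$), observe that the elastic part does not increase, and control the length term via convexity of $t \mapsto \sqrt{1+t^2}$ on the affine components where the endpoint values of $u$ and $\tilde u$ agree --- the paper uses the tangent-line inequality together with $\int_a^b (u' - \tilde u')\,dx = 0$ where you use Jensen, which is the same estimate. A minor bonus of your version is that you make the strict length decrease on nontrivial components explicit, which is what turns ``there exists a concave competitor with no larger energy'' into ``the minimizer itself is concave.''
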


\begin{prop}\label{prop:p9} (Euler-Lagrange equation for the penalized functional) 

Assume that there is $u \in G_\psi$ such that 
\begin{equation*}
\mathcal{E}_\epsilon(u) = \inf_{w \in G_\psi} \mathcal{E}_\epsilon (w). 
\end{equation*}
Define $v(x) = \frac{u''(x)}{(1+ u'(x)^2)^\frac{5}{4}}$. Then $u \in C^\infty ( \overline{\{ x \in (0,1) | u(x) > \psi(x) \}} ) $ and for all $x \in (0,1)$ such that $u(x) > \psi(x)$ it holds that 
\begin{enumerate}
\item 
\begin{equation*}
- 2\frac{v''(x)}{(1 + u'(x)^2)^\frac{5}{4}} +5\frac{u''(x) u'(x)}{(1 + u'(x)^2)^\frac{9}{4}} v'(x) = - \epsilon \frac{u''(x)}{(1+ u'(x)^2)^\frac{3}{2}} \geq 0, 
\end{equation*}
\item $v(0) = v(1) = 0$,
\item If $u > \psi $ on $(a,b)$ it holds that 
\begin{equation*}
\min_{x \in [a,b]} v(x) \geq \min( v(a), v(b) ).   
\end{equation*}
\end{enumerate}
\end{prop}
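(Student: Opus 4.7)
The plan is to follow the scheme of Proposition \ref{prop:p5}, carrying the length term $\epsilon \mathcal{L}(u)$ through the derivation. Let $\Omega := \{x \in (0,1) : u(x) > \psi(x)\}$, which is open by continuity and contains one-sided neighborhoods of $0$ and $1$ since $\psi(0), \psi(1) < 0 = u(0) = u(1)$. For any $\varphi \in C_c^\infty(\Omega)$, compactness of $\mathrm{supp}\,\varphi$ and strict positivity of $u - \psi$ there give $u + t\varphi \in G_\psi$ for small $|t|$, so minimality of $u$ yields the weak Euler--Lagrange identity
\begin{equation*}
\int_0^1 \left[\frac{2u''\varphi''}{(1+u'^2)^{5/2}} - \frac{5(u'')^2 u'\varphi'}{(1+u'^2)^{7/2}} + \epsilon\,\frac{u'\varphi'}{\sqrt{1+u'^2}}\right] dx = 0.
\end{equation*}
One integration by parts and the substitution $v = u''/(1+u'^2)^{5/4}$ produce (after a pleasant simplification) the first integral $-2v'/(1+u'^2)^{5/4} + \epsilon u'/\sqrt{1+u'^2} = \mathrm{const}$ on each component of $\Omega$. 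The bootstrap of Proposition \ref{prop:p5} then applies verbatim and promotes $u$ to $C^\infty$ on $\overline{\Omega}\cap (0,1)$, after which differentiating the first integral once more gives precisely the identity in (1). The inequality $-\epsilon u''/(1+u'^2)^{3/2} \geq 0$ is Proposition \ref{prop:concavity}, and is the only genuinely new input compared to the unpenalized case.

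For (2), choose $r \in (0,1)$ small enough that $[0,r] \subset \overline{\Omega}$ and pick $\varphi \in C^\infty([0,1])$ with $\varphi(0) = 0$, $\varphi'(0)$ arbitrary, and $\mathrm{supp}\,\varphi \subset [0, r]$. Then $u + t\varphi$ lies in $G_\psi$ for small $|t|$ by the same argument as above. Differentiating $\mathcal{E}_\epsilon(u + t\varphi)$ at $t = 0$ and integrating by parts, the interior integrand vanishes by (1), leaving only the boundary contribution $\frac{2u''(0)}{(1+u'(0)^2)^{5/2}}\,\varphi'(0) = 0$. Arbitrariness of $\varphi'(0)$ forces $u''(0) = 0$, i.e.\ $v(0) = 0$; the argument at $x = 1$ is symmetric.

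For (3), rearrange (1) into
\begin{equation*}
\frac{2}{(1+u'^2)^{5/4}}\, v'' - \frac{5 u'' u'}{(1+u'^2)^{9/4}}\, v' \leq 0 \quad \textrm{on } (a,b).
\end{equation*}
This is a linear second-order elliptic operator with strictly positive leading coefficient and smooth, bounded drift on $[a,b] \subset \overline{\Omega}$, so $v$ is a supersolution; the classical weak minimum principle yields $\min_{[a,b]} v = \min(v(a), v(b))$, which is (3). The one mildly subtle point throughout is the bootstrap up to $\partial\Omega$, but concavity (Proposition \ref{prop:concavity}) keeps $u'$ of bounded variation and hence the coefficients $(1+u'^2)^{\pm 5/4}$ smooth and uniformly positive, so the regularity argument of Proposition \ref{prop:p5} carries over intact.
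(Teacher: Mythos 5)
Your proposal is correct and follows essentially the same route as the paper, whose proof of this proposition is simply the remark that it "follows the lines of" the unpenalized Euler--Lagrange derivation in \cite{Anna}; you carry out exactly that adaptation, with the length term contributing the extra $\epsilon u''/(1+u'^2)^{3/2}$ on the right-hand side, the natural boundary conditions giving $v(0)=v(1)=0$, and the sign information from Proposition \ref{prop:concavity} turning the equation into the differential inequality to which the weak minimum principle applies. The computations (first integral, cancellation of the $5u''^2u'/(1+u'^2)^{7/2}$ terms, and the boundary-term argument for (2)) all check out.
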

\begin{proof}
The proof follows the lines of  \cite[Proposition 3.2]{Anna} and \cite[Corollary 3.3]{Anna}. 
\end{proof}

\begin{prop}\label{prop:bpunkt} (Touching the obstacle)

Let $u \in G_\psi$ be such that 
\begin{equation*}
\mathcal{E}_{\epsilon}(u) = \inf_{w \in G_\psi} \mathcal{E}_{\epsilon}(w) .
\end{equation*}
Then there is $x_0 \in (0,1)$ such that $u(x_0) = \psi(x_0)$. 
\end{prop}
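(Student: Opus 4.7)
The plan is to argue by contradiction: suppose that $u(x) > \psi(x)$ for every $x \in (0,1)$, and derive $u \equiv 0$, which will contradict the requirement $u \geq \psi$ since $\max_{[0,1]} \psi > 0$ by Assumption \ref{ass:obst}. This is the exact analogue of the strategy used for Proposition \ref{prop:p4} (7) in the unpenalized case, and the same structural ingredients are now available.

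First I would observe that the assumption extends to the endpoints: since $\psi(0),\psi(1) < 0$ and $u(0)=u(1)=0$, we have $u > \psi$ on the whole of $[0,1]$. In particular Proposition \ref{prop:p9} applies with $(a,b) = (0,1)$. Item (2) gives $v(0)=v(1)=0$, and item (3) then yields
\begin{equation*}
\min_{x \in [0,1]} v(x) \geq \min(v(0),v(1)) = 0,
\end{equation*}
so $v \geq 0$ on $[0,1]$. On the other hand, Proposition \ref{prop:concavity} ensures that $u$ is concave, so $u'' \leq 0$ a.e.\ in $(0,1)$; because the denominator $(1+u'^2)^{5/4}$ is strictly positive, this forces $v \leq 0$ a.e. Combining the two inequalities gives $v \equiv 0$ a.e., hence $u'' \equiv 0$ a.e.

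Therefore $u$ is affine on $[0,1]$, and the boundary data $u(0)=u(1)=0$ pin it down to $u \equiv 0$. Since $\sup_{[0,1]}\psi > 0$ by Assumption \ref{ass:obst}, the constraint $u \geq \psi$ fails at a point where $\psi$ is positive, the desired contradiction. I do not expect any substantive difficulty: the argument is a direct synthesis of the Euler--Lagrange information from Proposition \ref{prop:p9} and the concavity from Proposition \ref{prop:concavity}, with the only subtle point being the verification that the open interval $(0,1)$ is an allowed choice in Proposition \ref{prop:p9} (3), which is granted by the sign conditions on $\psi$ at the boundary.
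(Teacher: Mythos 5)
Your argument coincides with the paper's proof: both assume $u>\psi$ on $(0,1)$, use Proposition \ref{prop:p9} (2)--(3) to get $v\geq 0$, use concavity (Proposition \ref{prop:concavity}) to get $v\leq 0$, and conclude $u\equiv 0$, contradicting $\max\psi>0$. The proposal is correct and essentially identical to the paper's reasoning.
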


\section{Non-Existence for Large Cone Obstacles}

\begin{definition}(Symmetric cone obstacle)\label{def:symcone}

Let $A> 0 $ and $s \in (0, \frac{1}{2})$. 
We say $\psi$ is a \emph{symmetric cone obstacle} with \emph{valley} $[0,s]$ and \emph{peak} $A$ if 
\begin{equation*}
\psi(x) = \begin{cases} \frac{A}{\frac{1}{2}-s} (x-s) & 0 \leq x \leq \frac{1}{2} \\ \frac{A}{\frac{1}{2}-s} (1-s-x) & \frac{1}{2} \leq x \leq 1.
\end{cases} 
\end{equation*}

\end{definition}
Notice that any symmetric cone obstacle is admissible in the sense of Assumption \ref{ass:obst}. We will see that the properties of cone obstacles will lead to an explicit characterization of the contact set and finally to an explicit formula for candidates for minimizers. Eventually, a nonexistence result can be obtained. 

\begin{prop}\label{prop:cone1}(Touching cone obstacles)

Suppose that $u \in G_\psi $ is a minimizer of $\mathcal{E}$ in $G_\psi$ with respect to a symmetric cone obstacle $\psi$. Then $u(x) = \psi(x)$ if and only if $x = \frac{1}{2}$. 
\end{prop}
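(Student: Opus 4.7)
The plan is to combine the concavity of the minimizer $u$ with the piecewise affine structure of the cone obstacle on $[s,1/2]$ and $[1/2,1-s]$ in order to localize the contact set $\Lambda := \{x \in [0,1] : u(x) = \psi(x)\}$. The only inputs needed from Proposition \ref{prop:p4} are the concavity of $u$ (item 1) and the non-vacuousness $\Lambda \neq \emptyset$ (item 7). An immediate first reduction: since $u$ is concave with $u(0) = u(1) = 0$, the chord estimate gives $u \geq 0$ on $[0,1]$; because $\psi < 0$ on $[0, s) \cup (1-s, 1]$, this already forces $\Lambda \subseteq [s, 1-s]$.

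Next I would rule out the valley endpoints $s$ and $1-s$. If $u(s) = \psi(s) = 0$, then writing $s = (1-2s)\cdot 0 + 2s\cdot(1/2)$ and using concavity of $u$ on $[0,1/2]$ yields $0 = u(s) \geq 2s\, u(1/2) \geq 2s\, A > 0$, a contradiction; the case $1-s$ is symmetric.

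The core step is the elementary rigidity observation that a concave non-negative function $f$ on an interval $[a,b]$ which vanishes at some interior point must vanish identically on $[a,b]$ (write the interior zero as a convex combination of $a$ and an arbitrary point $y$, apply concavity and non-negativity, then mirror for the left half). I would apply this to $f := u - \psi$ on $[s, 1/2]$, which is concave (being the sum of the concave $u$ and the affine $-\psi$) and non-negative by the obstacle constraint: any hypothetical contact point $x_0 \in (s, 1/2)$ would force $u \equiv \psi$ on all of $[s, 1/2]$, hence $u(s) = 0$, contradicting the previous paragraph. A mirror argument on $[1/2, 1-s]$ eliminates contact on $(1/2, 1-s)$. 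Combining everything yields $\Lambda \subseteq \{1/2\}$, and $\Lambda \neq \emptyset$ promotes this to equality. The one delicate point to be careful about is that the rigidity lemma has to be applied to $[s, 1/2]$ and $[1/2, 1-s]$ separately, since $\psi$ has a corner at $1/2$ and $u - \psi$ is not concave through this corner; no attempt is made to run the lemma across the peak. Aside from this bookkeeping, the argument is a direct consequence of concavity and the cone structure of $\psi$.
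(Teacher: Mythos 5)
Your proof is correct, and it takes a genuinely different route from the paper's. The paper uses the $C^2$-regularity of the minimizer (Proposition \ref{prop:p4}) to extract the first-order condition $u'(a)=\psi'(a)$ at a hypothetical contact point $a\in(0,\tfrac{1}{2})$, and then lets the supporting line of the concave $u$ at $a$ --- which coincides with the line carrying the left branch of the cone --- push $u(0)$ down to $\psi(0)<0$ in a single stroke, finishing with the reflection $u(1-\cdot)$ for $a>\tfrac{1}{2}$. You instead avoid differentiability altogether and argue only with chord inequalities: nonnegativity of $u$ kills contact on $[0,s)\cup(1-s,1]$, the chord through $0$ and $\tfrac{1}{2}$ gives $u(s)\geq 2sA>0$ and kills $x=s$, and the rigidity of nonnegative concave functions with an interior zero, applied to $u-\psi$ on $[s,\tfrac{1}{2}]$, kills $(s,\tfrac{1}{2})$. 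All three steps check out: $u-\psi$ is indeed concave on $[s,\tfrac{1}{2}]$ because $\psi$ is affine there, and your rigidity lemma is correct as stated. What your approach buys is independence from the $C^1$-regularity of the minimizer --- concavity plus the existence of a contact point suffice --- at the price of a three-case decomposition. It could even be compressed: apply your rigidity lemma directly to $u-\psi$ on $[0,\tfrac{1}{2}]$, where $\psi$ is still affine; an interior contact point then forces $u\equiv\psi$ on $[0,\tfrac{1}{2}]$, hence $0=u(0)=\psi(0)<0$, which recovers the paper's contradiction in one step and without derivatives.
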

\begin{proof}
First recall from Proposition \ref{prop:p4} that $u \in C^2([0,1]) $ is concave and $u$ has to touch the obstacle somewhere. Suppose that $u$ touches $\psi$ at some $0 < a < \frac{1}{2} $. Then $\psi$ is continuously differentiable in a neighborhood of $a$ and $u - \psi $ has a local minimum at $a$, i.e. $u'(a) = \psi'(a)$. But then, concavity of $u$ and the cone property of $\psi$ imply
\begin{equation*}
0 = u(0) \leq u(a) + u'(a) (0 - a) = \psi(a) + \psi'(a) (0-a) = \psi(0) < 0, 
\end{equation*}
a contradiction.
  In case that $u$ touches the obstacle at some $a > \frac{1}{2}$, note that $u (1- \cdot) \in G_\psi$ is another minimizer for which the arguments above can be repeated to obtain the very same contradiction. 
\end{proof}

\begin{lemma}\label{lem:shapeofmin} (An explicit formula for a graph minimizer) 
 
Let $u \in G_\psi$ be a minimizer of $\mathcal{E} $ in $G_\psi$. Then there are constants $C_0 \leq 0 \leq C_1$ such that 
\begin{equation}\label{eq:expli}
u'(x) = \begin{cases} F_0^{-1}( \sqrt{2|C_0| }x) & 0 \leq x \leq \frac{1}{2} \\  F_1^{-1} ( \sqrt{2 C_1  } (1-x) ) & \frac{1}{2} \leq x \leq 1 \end{cases}, 
\end{equation}
where 
\begin{equation*}
F_0(x) = \int_{x}^{u'(0)} \frac{1}{\sqrt{u'(0) - z} (1 + z^2)^\frac{5}{4}} dz ,
\end{equation*}  
\begin{equation*}
F_1(x) = \int_{u'(1)}^{x} \frac{1}{\sqrt{z- u'(1)}} \frac{1}{(1+z^2)^\frac{5}{4}} dz .
\end{equation*}
\end{lemma}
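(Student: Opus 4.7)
My plan is to exploit the conservation law of Proposition \ref{prop:p4}(3) on each of the two open intervals on which $u > \psi$, integrate it once to reduce to a first-order autonomous ODE for $u'$, and then invert by separation of variables.

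First, by Proposition \ref{prop:cone1} the contact set consists exactly of the single point $\{1/2\}$, so $u > \psi$ on each of the open intervals $(0, 1/2)$ and $(1/2, 1)$. Applying Proposition \ref{prop:p4}(3) on each interval gives constants $C_0, C_1 \in \mathbb{R}$ with
\begin{equation*}
v'(x) = C_0\, (1 + u'(x)^2)^{5/4} \quad \text{on } (0, 1/2), \qquad v'(x) = C_1\, (1 + u'(x)^2)^{5/4} \quad \text{on } (1/2, 1).
\end{equation*}
The key trick is to multiply each equation by $2v(x) = 2 u''(x)/(1+u'(x)^2)^{5/4}$. This produces the exact differential
\begin{equation*}
(v^2)'(x) = 2 v(x) v'(x) = 2 C_i\, u''(x),
\end{equation*}
so integrating and using $v(0) = v(1) = 0$ from Proposition \ref{prop:p4}(4) yields
\begin{equation*}
v(x)^2 = 2 C_0 (u'(x) - u'(0)) \text{ on } (0, 1/2), \qquad v(x)^2 = 2 C_1 (u'(x) - u'(1)) \text{ on } (1/2, 1).
\end{equation*}

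Next I fix the signs. Concavity of $u$ (Proposition \ref{prop:p4}(1)) gives $u' \leq u'(0)$ on $(0, 1/2)$ and $u' \geq u'(1)$ on $(1/2, 1)$; together with $v^2 \geq 0$ this forces $C_0 \leq 0$ and $C_1 \geq 0$, as asserted. Concavity also gives $u'' \leq 0$ and hence $v \leq 0$ throughout, so on $(0, 1/2)$
\begin{equation*}
\frac{u''(x)}{(1 + u'(x)^2)^{5/4}} = -\sqrt{2|C_0|\,(u'(0) - u'(x))},
\end{equation*}
and analogously on $(1/2, 1)$ with $u'(1)$ in place of $u'(0)$. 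Substituting $y = u'(x)$ turns this into the separable ODE
\begin{equation*}
-\frac{y'(x)}{(1 + y(x)^2)^{5/4}\sqrt{u'(0) - y(x)}} = \sqrt{2|C_0|},
\end{equation*}
which I integrate from $0$ to $x$; a change of variables gives precisely $F_0(u'(x)) = \sqrt{2|C_0|}\, x$, i.e.\ the first branch of \eqref{eq:expli}. The same integration, this time from $x$ to $1$, delivers the second branch.

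The only genuinely delicate step is the sign bookkeeping: verifying that $u'(0) - u'(x) \geq 0$ on the left interval and $u'(x) - u'(1) \geq 0$ on the right one (to justify the square roots), and that $v \leq 0$ rather than $v \geq 0$ (to pick the correct branch when extracting $v$ from $v^2$). Both follow cleanly from the concavity of $u$ established in Proposition \ref{prop:p4}(1), and no further regularity arguments are needed because $u \in C^2([0,1])$ and $u$ is smooth away from the contact point $\{1/2\}$, so all integrations above are legitimate.
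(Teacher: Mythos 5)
Your argument is essentially the paper's: the same conservation law from Proposition \ref{prop:p5}(3), the same multiplication by $v$ and integration using $v(0)=v(1)=0$, and the same separation of variables. One genuine difference is welcome: you deduce $C_0\le 0\le C_1$ from $v^2\ge 0$ together with the monotonicity of $u'$, whereas the paper invokes Proposition \ref{prop:p5}(6) ($v$ decreasing near $0$, increasing near $1$); your route is a clean alternative. However, two points in your write-up need attention. First, the degenerate case: if $C_0=0$ then $v\equiv 0$ and $u'\equiv u'(0)$ on $[0,\tfrac12]$, and your ``separable ODE'' is a $0/0$ expression --- you cannot divide by $\sqrt{u'(0)-u'(x)}$, which vanishes identically. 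The conclusion still holds there because $F_0^{-1}(0)=u'(0)$, but this case must be split off (the paper does so, using Proposition \ref{prop:p5}(5) to show that $v$ vanishing anywhere on $(0,\tfrac12)$ forces $v\equiv 0$ on an initial interval and hence $C_0=0$). Second, even when $C_0<0$, the denominator $\sqrt{u'(0)-u'(x)}$ vanishes at $x=0$, so ``integrating from $0$ to $x$'' is an improper integral; your closing remark that $C^2$-regularity makes ``all integrations legitimate'' misses this, since the issue is the vanishing denominator rather than the regularity of $u$. The fix is exactly the paper's: integrate from $\epsilon$ to $x$ and let $\epsilon\to 0$, using $F_0(u'(\epsilon))\le 2\sqrt{u'(0)-u'(\epsilon)}\to 0$ by continuity of $u'$. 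Both repairs are short, so the proposal is correct in substance but incomplete as written.
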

\begin{proof}
Since according to Proposition \ref{prop:cone1}, the only point of contact with the obstacle is at $x = \frac{1}{2}$, we can find by Proposition \ref{prop:p5} $(3)$ some $C_0,C_1 \in \mathbb{R}$ such that 
\begin{equation*}
\frac{v'(x)}{(1+ u'(x)^2)^\frac{5}{4}} = C_0 \quad 0 \leq x < \frac{1}{2}
\end{equation*}
and 
\begin{equation*}
\frac{v'(x)}{(1+ u'(x)^2)^\frac{5}{4}} = C_1 \quad \frac{1}{2} < x \leq 1,
\end{equation*}
where $v$ is defined as in Proposition \ref{prop:p5}.
The signs of the constants are due to the fact that by Proposition \ref{prop:p5} $(6)$,  $v$ is decreasing near $0$ and increasing near $1$.    
For $0 \leq x \leq \frac{1}{2}$ one can multiply $v'(x) = C_0(1 + u'(x)^2)^\frac{5}{4}$ by $v(x)= \frac{u''(x)}{(1+ u'(x)^2)^\frac{5}{4}}$ to obtain 
\begin{equation*}
v(x) v'(x) = C_0 u''(x) \quad 0 \leq x \leq \frac{1}{2}.
\end{equation*}
Integration yields that 
\begin{equation*}
v(x)^2 = 2 C_0( u'(x) - u'(0) )  + D .
\end{equation*}
However, evaluating the expression at $x= 0 $, we can conclude with Proposition \ref{prop:p5} $(4)$ that $ D= 0 $. Recalling that $u$ is concave:
\begin{equation}\label{eq:diffgl}
u''(x)^2 = 2 C_0( u'(x) - u'(0) ) ( 1 + u'(x)^2)^\frac{5}{2}= 2 |C_0| (u'(0) - u'(x) ) ( 1 + u'(x)^2)^\frac{5}{2}.
\end{equation}
Again by concavity, there is only one possible choice of signs to make \eqref{eq:diffgl} hold true, namely when
\begin{equation*}
-u''(x) = \sqrt{2|C_0|} \sqrt{u'(0)- u'(x)} (1 + u'(x)^2)^\frac{5}{4}.
\end{equation*}
Now there are two cases: Either, there is $\delta \in ( 0, \nicefrac{1}{2} )  $ such that $v_{\mid_{[0,\delta]}} \equiv 0 $ or $v_{\mid_{(0,\frac{1}{2})}} < 0 $. Indeed, if there is $\delta \in (0, \nicefrac{1}{2})$ such that $v(\delta) = 0 $ then  Proposition \ref{prop:p5} (5) vields that $ v \equiv 0 $ on $[0, \delta]$.   In the first case,  $C_0 = 0$ and since $F_0^{-1}(0) = u'(0) $, \eqref{eq:expli} holds true. In the remaining case, for each $\epsilon > 0 $  we can solve the ODE with separation of variables to obtain
\begin{equation}\label{eq:expli2}
F_0(u'(x)) - F_0(u'(\epsilon) )  = \sqrt{2|C_0|  } (x-\epsilon) ,  \quad 0 < x \leq \frac{1}{2}.
\end{equation}
Notice that 
\begin{align*}
\lim_{\epsilon \rightarrow 0 } F_0(u'(\epsilon) )  & = \lim_{\epsilon \rightarrow 0 } \int_{u'(\epsilon)}^{u'(0)} \frac{1}{\sqrt{u'(0)- z} (1+ z^2)^\frac{5}{4}} dz 
\\ & \leq \limsup_{\epsilon \rightarrow 0 } \int_{ u'(\epsilon) }^{u'(0)} \frac{1}{\sqrt{u'(0) -z}} dz \\ & = 2 \sqrt{u'(0) - u'(\epsilon)} \rightarrow 0 \quad ( \epsilon \rightarrow 0 ) 
\end{align*}
because $W^{2,2}(0,1)$ embeds compactly into $C^1([0,1])$. Letting $\epsilon \rightarrow 0 $ in \eqref{eq:expli2} proves the claim. 
Very similarly we obtain the formula given for $\frac{1}{2} \leq x \leq 1.$ 
\end{proof}

\begin{proof}[Proof of Theorem 1.1]
Assume for a contradiction that $\inf_{v \in M} \mathcal{E}(v)$ is attained by $u \in G_\psi$. Without loss of generality we can assume that $u'(\frac{1}{2}) \leq 0 $. Indeed, if $u' ( \frac{1}{2} ) > 0 $, notice that $u(1-\cdot)$ is also a minimizer satisfying the same $L^\infty(0,1)$-bounds.

Now choose $F_0$ as in Lemma \ref{lem:shapeofmin}. It must hold that $C_0\neq 0$ since otherwise $u'$ is constant on $[0, \frac{1}{2}]$ and there is no way for $u'(\frac{1}{2})$ to be nonpositive. Choose $x^*$ to be the smallest point in $(0,1)$ at which $u$ attains its maximum $||u||_\infty$. Observe that $x^* \leq \frac{1}{2}$ since $u$ is $C^2([0,1])$, concave, $u'(x^*) = 0 $ and $u'(\frac{1}{2}) \leq 0 $. 
Note that 
\begin{eqnarray}\label{eq:abschtzung}
\frac{\sqrt{2|C_0|}}{2} = F_0 ( u'( \nicefrac{1}{2} ) ) &  = &  \int_{u'(\frac{1}{2})}^{u'(0)} \frac{1}{\sqrt{u'(0)- t}} \frac{1}{(1+t^2)^\frac{5}{4}} dt  \nonumber \\ & \geq  & \int_{0}^{u'(0)} \frac{1}{\sqrt{u'(0)- t}} \frac{1}{(1+t^2)^\frac{5}{4}} dt .
\end{eqnarray}
 Using this estimate we find
\begin{eqnarray*}
||u||_\infty & =& \int_0^{x^*} u'(s) ds  =  \int_0^{x^*} F_0^{-1} ( \sqrt{2|C_0|} s) ds \\ &= & \frac{1}{\sqrt{2 |C_0|}} \int_0^{\sqrt{2 |C_0|} x^*} F_0^{-1}(s) ds =  \frac{1}{\sqrt{2|C_0|}} \int_{F_0^{-1} (0) }^{F_0^{-1} ( \sqrt{2|C_0|}x^*)} t F_0'(t) dt \\ &= &  \frac{1}{\sqrt{2|C_0|}} \int_{u'(0) }^{u'(x^*)} t F_0'(t) dt = \frac{1}{\sqrt{2 |C_0|}} \int_0^{u'(0)} \frac{t}{\sqrt{u'(0)- t}} \frac{1}{(1+ t^2)^\frac{5}{4}} dt \\ & \leq & \frac{\int_0^{u'(0)} \frac{t}{\sqrt{u'(0)- t}} \frac{1}{(1+ t^2)^\frac{5}{4}} dt}{2 \int_0^{u'(0)} \frac{1}{\sqrt{u'(0)- t}} \frac{1}{(1+ t^2)^\frac{5}{4}} dt}.
\end{eqnarray*}
Using Lemma \ref{lem:intiden} we conclude that 
\begin{equation*}
||u||_\infty \leq \frac{1}{3} u'(0) \frac{\mathit{HYP2F1}(1, \frac{3}{2} ; \frac{7}{4}, -u'(0)^2 )}{\mathit{HYP2F1}(\frac{1}{2}, 1; \frac{3}{4}, -u'(0)^2)},
\end{equation*}
but this is a contradiction to 
\begin{equation}\label{eq:haeslsup}
 \sup_{A> 0 } \frac{1}{3}A \frac{\mathit{HYP2F1}( 1, \frac{3}{2} ; \frac{7}{4} , -A^2) }{\mathit{HYP2F1}(\frac{1}{2},1, \frac{3}{4}, -A^2)} < \sup_{x \in (0,1)} \psi(x)  \leq  ||u||_\infty,
\end{equation}
provided that we can show that the given supremum is finite. We will show the finiteness in two steps, one of which will be done in the appendix. The first step will be to show that on every compact subset $K$ of $[0, \infty) $ 
\begin{equation*}
\sup_{A \in K } \frac{1}{3}A \frac{\mathit{HYP2F1}( 1, \frac{3}{2} ; \frac{7}{4} , -A^2) }{\mathit{HYP2F1}(\frac{1}{2},1, \frac{3}{4}, -A^2)} < \infty,
\end{equation*}
the second one is that
\begin{equation*}
\lim_{A \rightarrow \infty} \frac{1}{3}A \frac{\mathit{HYP2F1}( 1, \frac{3}{2} ; \frac{7}{4} , -A^2) }{\mathit{HYP2F1}(\frac{1}{2},1, \frac{3}{4}, -A^2)}
\end{equation*}
exists and is finite. For the first step, consider the following estimate:
\begin{equation*}
\frac{1}{3}A \frac{\mathit{HYP2F1}( 1, \frac{3}{2} ; \frac{7}{4} , -A^2) }{\mathit{HYP2F1}(\frac{1}{2},1, \frac{3}{4}, -A^2)} =  \frac{\int_0^{A} \frac{t}{\sqrt{A- t}} \frac{1}{(1+ t^2)^\frac{5}{4}} dt}{2 \int_0^{A} \frac{1}{\sqrt{A- t}} \frac{1}{(1+ t^2)^\frac{5}{4}} dt} \leq \frac{A}{2}
\end{equation*}
which we obtained using the estimate 
\begin{equation*}
\int_0^{A} \frac{t}{\sqrt{A- t}} \frac{1}{(1+ t^2)^\frac{5}{4}} dt \leq  A \int_0^{A} \frac{1}{\sqrt{A- t}} \frac{1}{(1+ t^2)^\frac{5}{4}} dt.
\end{equation*}
 For the second step we refer to the Appendix, see Lemma \ref{lem:finiteness}.  
\end{proof}
\begin{remark}
Uniqueness of a minimizer is a very interesting problem. As an intermediate step, one could attempt to show symmetry of a minimizer, possibly using rearrangement inequalities. Indeed, if it is at all possible to find a non-symmetric minimizer $u\in W^{2,2}(0,1)$, then it cannot be unique, since $u(1-\cdot)$ would also be a minimizer. 
\end{remark}

\section{The Pseudograph Framework}\label{sec:Pseudograph}

In the next section, we will examine another minimization problem, more precisely show existence and regularity for the \textit{trace-length} problem. We will see at the very end of this section, how these results can be used to understand the shape of a minimizer of the original problem. The main issue with regularity in this section is that we have to consider obstacles that are possibly nonnegative at the boundary. Existence is also an issue, since we consider a second order problem that is invariant with respect to reparametrization and therefore the problem might be ill-posed in any reflexive Sobolev space.

 \subsection{Pseudographs and Weak Reparametrizations}
 The following concepts will be needed when it comes to the trace-length problem (see \eqref{eq:tracelength}) A natural space for this problem is $W^{1,1}$ (or even $BV$). In this space, the reparametrizations we have to consider are $W^{1,1}$, so this section is dedicated to understanding these kinds of reparametrizations. The proofs are again to be found in the appendix.  

\begin{prop}\label{prop:p12} (The vertical set) 

Let $\gamma$ be a pseudograph such that $\gamma_1(0) = 0, \gamma_1(1) = 1$. Then $\gamma_1 $ satisfies $\mathcal{H}^0( \{ t \in (0,1) | \gamma_1(t) = x \} ) = 1 $ for (Lebesgue-)almost every $x \in (0,1)$, where $\mathcal{H}^0$ is the zero-dimensional Hausdorff measure. 
\end{prop}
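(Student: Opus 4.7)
The plan is to reduce the statement to the classical fact that a continuous monotone function on an interval has at most countably many ``plateau'' values.

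First I would check that $\gamma_1$ is a continuous non-decreasing surjection $[0,1] \to [0,1]$. The Sobolev embedding $W^{2,1}_{loc}((0,1);\mathbb{R}^2) \hookrightarrow C^1_{loc}((0,1);\mathbb{R}^2)$ gives that $\gamma_1'$ is a genuine continuous function on $(0,1)$, so the pointwise inequality $\gamma_1'(t) \geq 0$ in Definition \ref{prop:p10} forces $\gamma_1$ to be non-decreasing on $(0,1)$. Since $\gamma \in W^{1,1}((0,1);\mathbb{R}^2)$ admits an absolutely continuous representative on $[0,1]$, and this representative takes the values $0$ and $1$ at the endpoints by hypothesis, we obtain a continuous non-decreasing map $\gamma_1 \colon [0,1] \to [0,1]$ with $\gamma_1(0) = 0$ and $\gamma_1(1) = 1$. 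In particular $\gamma_1$ is surjective, so $\gamma_1^{-1}(x)$ is non-empty for every $x \in (0,1)$.

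Next I would exploit monotonicity and continuity: for every $x \in (0,1)$ the fiber $\gamma_1^{-1}(x)$ is a non-empty closed interval $[a_x, b_x] \subset [0,1]$. Set
\begin{equation*}
E := \{ x \in (0,1) : a_x < b_x \}.
\end{equation*}
If $x, y \in E$ are distinct, the open intervals $(a_x, b_x)$ and $(a_y, b_y)$ are disjoint, because $\gamma_1$ takes the constant value $x$ on the first and the constant value $y$ on the second. A disjoint family of non-degenerate open subintervals of $(0,1)$ is at most countable (each contains a rational), so $E$ is countable and hence Lebesgue-negligible.

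For $x \in (0,1) \setminus E$ one has $\gamma_1^{-1}(x) = \{a_x\}$, a singleton, so $\mathcal{H}^0(\gamma_1^{-1}(x)) = 1$. Since the complement in $(0,1)$ has measure zero, this is the desired conclusion. There is no serious obstacle here: the only thing that could go wrong is if the pointwise condition $\gamma_1' \geq 0$ did not propagate to monotonicity of $\gamma_1$, which is why the first step (using $W^{2,1}_{loc} \hookrightarrow C^1_{loc}$ to upgrade the a.e. statement to a pointwise one on the open interval) matters.
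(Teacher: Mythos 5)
Your proof is correct, but it takes a genuinely different route from the paper. The paper deduces the statement from the coarea formula for locally Lipschitz functions: it writes $1 = \int_0^1 \gamma_1'\,dt = \int_{\mathbb{R}} \mathcal{H}^0(\{t : \gamma_1(t)=x\})\,dx$ (first on $(\nicefrac{1}{n}, 1-\nicefrac{1}{n})$, then passing to the limit by monotone convergence), observes via the intermediate value theorem that the integrand is $\geq 1$ on $(0,1)$, and concludes that it equals $1$ a.e. because the integral over an interval of length one is exactly one. Your argument replaces this geometric-measure-theoretic identity with the elementary classical fact that a continuous monotone function has at most countably many non-degenerate level sets; the preliminary step upgrading $\gamma_1'\geq 0$ to genuine monotonicity of the continuous representative on $[0,1]$ is handled correctly and is indeed the only point where regularity enters. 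Your approach is more elementary and in fact yields a slightly stronger conclusion (the exceptional set is countable, not merely Lebesgue-null), whereas the paper's coarea computation has the side benefit of establishing the change-of-variables identity $\int_0^1 |\gamma_1'|\,dt = \int_{\mathbb{R}} \mathcal{H}^0(\gamma_1^{-1}(x))\,dx$ in a form that is reused verbatim in later proofs (notably in the energy and length comparisons of Theorem \ref{thm:panettone}).
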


\begin{definition}\label{def:d1} (Invertibility in $W^{1,1}$) 

A function $u \in W^{1,1}(a,b)$ such that $u(a)= c , u(b) = d $ is called \emph{invertible in $W^{1,1}(a,b)$} if $u$ is bijective and $u^{-1} \in W^{1,1}(c,d)$ if $c< d$ or $u^{-1} \in W^{1,1} (d,c) $ if $d < c$.
\end{definition}

\begin{remark}
Functions that lie in $W^{1,1}$ are uniformly continuous. If we further require that they are invertible they have to be either increasing or decreasing. Without loss of generality we consider increasing functions only, the other case is similar. 
\end{remark}

\begin{prop}\label{prop:p15} (Luzin-N-Property)

Assume that $v$ is invertible in $W^{1,1}$. Then $v$ and $v^{-1}$ have the Luzin-N-property, i.e. $v, v^{-1} $ map sets of Lebesgue measure zero to sets of Lebesgue measure zero. 
\end{prop}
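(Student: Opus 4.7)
The plan is to reduce the statement to the classical one-dimensional identification of $W^{1,1}(a,b)$ with the space of absolutely continuous functions on $[a,b]$, combined with the well-known fact that absolutely continuous functions enjoy the Luzin-N-property. First I would recall that every element of $W^{1,1}(a,b)$ admits (and is usually identified with) an absolutely continuous representative on $[a,b]$, a standard result in dimension one. By hypothesis both $v$ and $v^{-1}$ belong to $W^{1,1}$, so both may be treated as absolutely continuous bijections between intervals.

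Next I would establish the Luzin-N-property for an arbitrary absolutely continuous $f:[a,b] \to \mathbb{R}$. Fix a Lebesgue null set $E \subset [a,b]$ and $\varepsilon > 0$. The absolute continuity of the Lebesgue integral, applied to $|f'| \in L^1(a,b)$, yields $\delta > 0$ such that $\int_A |f'|\, dt < \varepsilon$ whenever $|A| < \delta$. Cover $E$ by a countable family of pairwise disjoint open intervals $(a_i,b_i) \subset (a,b)$ of total length less than $\delta$. Since $f$ is continuous, each image $f((a_i,b_i))$ is itself an interval, and its one-dimensional measure is bounded by the oscillation of $f$ on $(a_i,b_i)$, which by the fundamental theorem of calculus for AC functions is in turn bounded by $\int_{a_i}^{b_i} |f'|\, dt$. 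Summing and using the choice of $\delta$ gives $|f(E)| \leq \sum_i \int_{a_i}^{b_i} |f'|\, dt < \varepsilon$, and since $\varepsilon$ is arbitrary, $|f(E)| = 0$.

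Applying this argument to $f = v$ and then to $f = v^{-1}$ completes the proof. There is no substantive obstacle: the key inputs are classical, and the only estimate that deserves any attention is the oscillation bound $\mathrm{osc}_{[a_i,b_i]} f \leq \int_{a_i}^{b_i}|f'|\, dt$, which follows at once from the fundamental theorem for AC functions. This is presumably why the authors defer the argument to the appendix.
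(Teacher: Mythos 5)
Your proposal is correct and follows essentially the same route as the paper: both reduce the claim to the identification of $W^{1,1}$ functions on an interval with absolutely continuous functions and then invoke the Luzin-N-property of absolutely continuous maps, applied separately to $v$ and $v^{-1}$. The only difference is that the paper cites the Banach--Zaretsky theorem for the step that absolute continuity implies the Luzin-N-property, whereas you prove it directly with the standard covering and oscillation estimate; that argument is sound.
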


\begin{prop}\label{prop:p11} (Derivative Formula) 

Let $v : (a,b) \rightarrow (v(a),v(b))$ be invertible in $W^{1,1}$ and increasing . Then, $v' \neq 0 $ a.e., $v' \circ  v^{-1}$ is measurable and for almost every $x \in (v(a),v(b)) $ :  $(v^{-1})'(x) = \frac{1}{v'(v^{-1}(x))} $.  
\end{prop}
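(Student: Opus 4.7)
The plan is to combine the absolute continuity / Luzin N properties of $v$ and $v^{-1}$ (Proposition~\ref{prop:p15}) with the elementary inverse-function difference-quotient argument at points of classical differentiability. The three steps are: first establish the derivative formula pointwise wherever $v$ is classically differentiable with nonzero derivative; second show that the zero set of $v'$ is null, using a Luzin-N transfer; third verify the measurability claim using that $v$ is a homeomorphism.

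For the first step I fix any $y \in (a,b)$ at which $v$ is classically differentiable with $v'(y) \neq 0$, set $x := v(y)$, and for small $h$ write $y+k := v^{-1}(x+h)$. Since $v^{-1}$ is continuous (as a $W^{1,1}$ function of one variable), $k \to 0$ as $h \to 0$, and using $h = v(y+k)-v(y)$ one obtains
\begin{equation*}
\frac{v^{-1}(x+h) - v^{-1}(x)}{h} = \frac{k}{v(y+k) - v(y)} \xrightarrow{h \to 0} \frac{1}{v'(y)}.
\end{equation*}
Thus $v^{-1}$ is classically differentiable at $x=v(y)$ with derivative $1/v'(y)$, which is the desired identity at every such point.

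Next I would rule out $v' = 0$ on a positive-measure set. Let $D \subset (a,b)$ denote the full-measure set on which $v$ is classically differentiable and set $Z := \{y \in D : v'(y) = 0\}$. For $y \in Z$ the quotient above diverges, so $v^{-1}$ fails to be classically (finitely) differentiable at $v(y)$; since $v^{-1}$ is AC and hence differentiable a.e., this forces $|v(Z)| = 0$. Applying the Luzin N property of $v^{-1}$ (Proposition~\ref{prop:p15}) to the null set $v(Z)$ yields $|Z| = |v^{-1}(v(Z))| = 0$, which proves $v' \neq 0$ a.e.\ Combined with the first step, the formula $(v^{-1})'(x) = 1/v'(v^{-1}(x))$ holds on $v(D \setminus Z)$, whose complement in $(v(a),v(b))$ is $v((a,b) \setminus D) \cup v(Z)$, a null set by Luzin N of $v$.

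For measurability of $v' \circ v^{-1}$ I would use that $v$ is a continuous, strictly monotone bijection with continuous inverse, hence a homeomorphism of $(a,b)$ onto $(v(a),v(b))$. For an arbitrary Borel set $B$, decompose the Lebesgue-measurable set $(v')^{-1}(B)$ as $A \cup N$ with $A$ Borel and $|N| = 0$; then
\begin{equation*}
(v' \circ v^{-1})^{-1}(B) = v\bigl((v')^{-1}(B)\bigr) = v(A) \cup v(N),
\end{equation*}
where $v(A)$ is Borel (image of a Borel set under a homeomorphism) and $v(N)$ has Lebesgue measure zero by Luzin N of $v$. Hence $v' \circ v^{-1}$ is Lebesgue measurable. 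The main obstacle is the second step: absolute continuity of $v$ alone does not preclude $v'$ from vanishing on a positive-measure set, and the assumption $v^{-1} \in W^{1,1}$ (equivalently, Luzin N for $v^{-1}$) is precisely what transfers the null set $v(Z)$ back to a null set in $(a,b)$.
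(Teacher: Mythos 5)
Your proof is correct and follows essentially the same route as the paper: a.e.\ differentiability of the monotone functions $v$ and $v^{-1}$, the elementary difference-quotient identity at points where both are differentiable, and the Luzin-N property from Proposition \ref{prop:p15} to transfer the relevant null sets. The only cosmetic difference is the measurability step, where the paper simply observes that $v'\circ v^{-1}$ coincides a.e.\ with the reciprocal of the measurable function $(v^{-1})'$, while you argue via Borel-plus-null decomposition under the homeomorphism $v$; both are fine.
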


\begin{prop}\label{prop:p14} (Composition by diffeomorphisms) 

Let $v \in C^1(a,b)$ be such that there exists $\epsilon > 0$ with $\epsilon < v' < \frac{1}{\epsilon}$ .  If $u$ is invertible in $W^{1,1}(v(a),v(b))$ then $ u \circ v $ is invertible in $W^{1,1}(a,b)$.  

\end{prop}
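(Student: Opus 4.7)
The plan is to verify the three requirements of Definition \ref{def:d1} for $u \circ v$: bijectivity, membership in $W^{1,1}(a,b)$, and $W^{1,1}$-regularity of the inverse. Since $v \in C^1(a,b)$ with $\epsilon < v'(x) < 1/\epsilon$, $v$ is a strictly increasing $C^1$-diffeomorphism from $(a,b)$ onto $(v(a),v(b))$; in particular, $v$ and $v^{-1}$ are bi-Lipschitz. Bijectivity of $u \circ v$ then follows immediately from the bijectivity of the two factors.

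For the $W^{1,1}$-regularity of $u \circ v$, I would start from the fundamental theorem of calculus for the absolutely continuous function $u$,
$$u(y) = u(v(a)) + \int_{v(a)}^{y} u'(z)\, dz, \qquad y \in [v(a),v(b)],$$
set $y = v(x)$, and perform the change of variable $z = v(\xi)$. This is licit because $v$ is a $C^1$-diffeomorphism with positive, bounded, and bounded-below derivative, and it yields
$$u(v(x)) = u(v(a)) + \int_a^x u'(v(\xi))\, v'(\xi)\, d\xi.$$
By the same change of variables,
$$\int_a^b |u'(v(\xi))|\,|v'(\xi)|\, d\xi = \int_{v(a)}^{v(b)} |u'(z)|\, dz < \infty,$$
so $u \circ v$ is absolutely continuous on $[a,b]$ with $(u \circ v)' = (u'\circ v)\,v' \in L^1(a,b)$, and thus $u \circ v \in W^{1,1}(a,b)$.

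For the inverse, I would observe that $(u \circ v)^{-1} = v^{-1} \circ u^{-1}$. By Proposition \ref{prop:p11} (or direct computation), $v^{-1}$ is $C^1$ with derivative again bounded between $\epsilon$ and $1/\epsilon$, and $u^{-1} \in W^{1,1}$ by hypothesis. The same argument as above then gives $v^{-1} \circ u^{-1} \in W^{1,1}$ of the appropriate interval, finishing the verification.

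The main obstacle is technical rather than conceptual: one has to ensure that $u' \circ v$ is well-defined as a measurable function, given that $u'$ is only specified almost everywhere. This is where the bi-Lipschitz character of $v$ is crucial: since $v^{-1}$ is Lipschitz it satisfies the Luzin-N property (as in Proposition \ref{prop:p15}), so the preimage under $v$ of a null set is null, and the composition $u' \circ v$ is unambiguously defined a.e. Everything else is a direct consequence of the $C^1$ change-of-variables formula.
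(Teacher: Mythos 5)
Your proposal is correct and follows essentially the same route as the paper: bijectivity is immediate, $u\circ v\in W^{1,1}$ comes from the change of variables $z=v(\xi)$ (you phrase it via the fundamental theorem of calculus for the absolutely continuous $u$, the paper via testing against $\phi\in C_0^\infty$ — equivalent in one dimension), and the Luzin-N point about $u'\circ v$ being well defined a.e.\ is exactly the right technical care. The only loose phrase is ``the same argument as above'' for $(u\circ v)^{-1}=v^{-1}\circ u^{-1}$: there the composition order is reversed (a $C^1$ bi-Lipschitz outer function applied to a $W^{1,1}$ inner function), so it is not literally the same computation, but the needed fact is even easier — a Lipschitz $C^1$ function composed with an absolutely continuous function is absolutely continuous with derivative $(v^{-1})'(u^{-1})\,(u^{-1})'\in L^\infty\cdot L^1\subset L^1$, which is the standard chain rule the paper cites from Evans--Gariepy.
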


\begin{prop}\label{prop:p16} (Left-Composition with Sobolev functions) 

Let  $v$ be invertible in $W^{1,1}(a,b)$, increasing and $u \in W^{1,1} (v(a),v(b)) $. Then $u \circ v \in W^{1,1}(a,b)$ with derivative $(u \circ v)'(x) = u'(v(x))v'(x)$. 
\end{prop}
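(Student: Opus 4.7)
The plan is to establish the identity by smooth approximation, since the chain rule is elementary when the outer function is Lipschitz. First I pick a sequence $u_n \in C^1([v(a),v(b)])$ converging to $u$ in $W^{1,1}(v(a),v(b))$; the one-dimensional Sobolev embedding $W^{1,1} \hookrightarrow C^0$ ensures $u_n \to u$ uniformly on $[v(a),v(b)]$.

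For each $u_n$, an antiderivative $F_n$ is Lipschitz on the compact interval $[v(a),v(b)]$, and the composition of a Lipschitz function with the absolutely continuous map $v$ is itself absolutely continuous on $[a,b]$, with the classical chain rule $(u_n \circ v)'(t) = u_n'(v(t)) v'(t)$ valid a.e. Applying this reasoning to an antiderivative of any continuous $f$ yields the change-of-variables identity
\begin{equation*}
\int_a^x f(v(t)) v'(t)\, dt = \int_{v(a)}^{v(x)} f(\tau)\, d\tau
\end{equation*}
for every $f \in C^0([v(a),v(b)])$ and every $x \in [a,b]$. Taking $f = |u_n' - u_m'|$ and $x = b$ shows that $\{(u_n' \circ v)\, v'\}_n$ is Cauchy in $L^1(a,b)$, hence converges to some $g \in L^1(a,b)$. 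Passing to the limit in
\begin{equation*}
u_n(v(x)) = u_n(v(a)) + \int_a^x u_n'(v(t)) v'(t)\, dt
\end{equation*}
(uniformly on the left, in $L^1$ for the integrand on the right) gives $u(v(x)) = u(v(a)) + \int_a^x g(t)\, dt$, so that $u \circ v \in W^{1,1}(a,b)$ with $(u\circ v)' = g$ a.e.

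It remains to identify $g$ with $(u' \circ v)\, v'$. I extract a subsequence $u_{n_k}' \to u'$ pointwise a.e. on $(v(a),v(b))$, and let $N$ denote the exceptional null set. Here Proposition \ref{prop:p15} is decisive: since $v^{-1}$ has the Luzin-N property, the preimage $\{t \in (a,b) : v(t) \in N\}$, which coincides with $v^{-1}(N)$ as a set because $v$ is a bijection, is Lebesgue null. Therefore $u_{n_k}'(v(t)) v'(t) \to u'(v(t)) v'(t)$ for a.e. $t$, and extracting a further a.e.-convergent subsequence from the $L^1$ convergence to $g$ yields $g = (u' \circ v) v'$ a.e.

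The most delicate point is this final identification: without the Luzin-N property for $v^{-1}$, the composition $u' \circ v$ (with $u'$ defined only almost everywhere) need not even be well defined as an equivalence class of measurable functions, and the pointwise a.e.\ convergence of $u_{n_k}'$ could fail to transfer under composition with $v$. Everything else in the argument is standard once this transfer is secured.
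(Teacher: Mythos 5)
Your proof is correct, and it takes a genuinely different route from the paper's. The paper verifies the weak-derivative identity directly: it tests $u\circ v$ against $\phi'$ for $\phi\in C_0^\infty(a,b)$, transports the integral to $(v(a),v(b))$ via the measure-theoretic substitution theorem \cite[Theorem 263 D]{Fremlin} combined with the inverse-derivative formula $(v^{-1})'=1/(v'\circ v^{-1})$ of Proposition \ref{prop:p11} (which is why it needs $v'\neq 0$ a.e.\ and inserts the factor $v'/v'$), integrates by parts on the image interval against $(\phi\circ v^{-1})'$, and transports back; the same substitution theorem gives $(u'\circ v)v'\in L^1$. You instead approximate $u$ by $C^1$ functions, use only the elementary chain rule for a $C^1$ function composed with an absolutely continuous one, derive from it a change-of-variables identity for continuous integrands, and run an $L^1$-Cauchy argument, invoking the Luzin-N property of $v^{-1}$ (Proposition \ref{prop:p15}) exactly once, at the final identification of the limit -- which, as you rightly stress, is also the point where the statement would fail without that hypothesis. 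Your approach buys independence from the general substitution theorem and from Proposition \ref{prop:p11}: you never divide by $v'$, so the nonvanishing of $v'$ plays no role. The paper's approach is shorter given the cited machinery and yields the distributional identity in one computation. Both arguments correctly locate the Luzin-N property of $v^{-1}$ as the essential ingredient making $u'\circ v$ well defined.
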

\begin{definition}\label{def:d2} (Weak Reparametrization) 

Let $\gamma \in W^{1,1} ((a,b); \mathbb{R}^2) $ and $\iota \in W^{1,1} ((c,d); \mathbb{R}^2) $ be two curves. We say that $\gamma$ is a \emph{(weak) reparametrization} of $\iota$ if there exists $\phi$ invertible in $W^{1,1}$  such that $\gamma \circ \phi = \iota$. 
\end{definition}

\begin{prop}\label{prop:p18} (Arclength reparametrization of graphs)

Let $u \in C^1(0,1)\cap W^{1,1}(0,1)$. Then there exists $L> 0 $ and $\phi \in W^{1,1}(0,L) \cap C^1(0,L)$ invertible in $W^{1,1}$ such that 
\begin{equation*}
\begin{cases}
\phi'(s) = \frac{1}{\sqrt{1+ u'(\phi(s))^2}} \quad s \in (0,L), \\
\phi(0) = 0,  \\ \phi(L) = 1 .
\end{cases}
\end{equation*}
Moreover, $s \mapsto (\phi(s), u(\phi(s))) $ is a weak reparametrization of $(x,u(x))$. Additionally, if $u \in W^{2,2}_{loc}(0,1)$, then $\phi \in W^{2,2}_{loc}(0,L) $ and if $u$ is concave, then the (weak) reparametrization $s \mapsto ( \phi(s), u(\phi(s) ) $ lives in $W^{2,1}(0,L) \cap W^{2,2}_{loc} (0,L) $. 
\end{prop}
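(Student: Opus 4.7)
The plan is to build $\phi$ as the inverse of the arclength function. Setting $\sigma(x) := \int_0^x \sqrt{1+u'(t)^2}\,dt$ and $L := \sigma(1)$, I would first observe that $u \in C^1(0,1) \cap W^{1,1}(0,1)$ implies the integrand is continuous on $(0,1)$ and dominated by $1 + |u'| \in L^1$, so $\sigma \in C^1(0,1) \cap W^{1,1}(0,1)$ with $\sigma' \geq 1$. Then $\sigma$ is a strictly increasing bijection $[0,1] \to [0,L]$, and the inverse function rule yields $\phi := \sigma^{-1} \in C^1(0,L)$ with $\phi'(s) = 1/\sqrt{1+u'(\phi(s))^2} \in (0,1]$, hence $\phi \in W^{1,1}(0,L)$; since $\phi^{-1} = \sigma \in W^{1,1}(0,1)$, $\phi$ is invertible in $W^{1,1}$ in the sense of Definition \ref{def:d1}. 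For the weak reparametrization assertion, writing $\iota(x) = (x,u(x))$ and $\Gamma(s) = (\phi(s), u(\phi(s)))$, I would note that $\Gamma \circ \sigma = \iota$ by direct substitution, which matches Definition \ref{def:d2} with $\sigma$ as the reparametrization.

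Assume now $u \in W^{2,2}_{loc}(0,1)$. Differentiating the ODE for $\phi$ gives $\phi'' = -u'(\phi)u''(\phi)/(1+u'(\phi)^2)^2$, and a short simplification of $(u\circ\phi)'' = u''(\phi)(\phi')^2 + u'(\phi)\phi''$ yields $(u\circ\phi)'' = u''(\phi)/(1+u'(\phi)^2)^2$. On any compact $[\alpha,\beta] \subset (0,L)$ the derivative $u'$ is bounded on $[\phi(\alpha),\phi(\beta)]$, and the change of variables $ds = \sqrt{1+u'^2}\,dy$ then bounds both $\int_\alpha^\beta (\phi'')^2\,ds$ and $\int_\alpha^\beta ((u\circ\phi)'')^2\,ds$ by a multiple of $\int_{\phi(\alpha)}^{\phi(\beta)} (u'')^2\,dy < \infty$, giving $\phi,\Gamma \in W^{2,2}_{loc}(0,L)$.

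The genuinely delicate step is the global $W^{2,1}(0,L)$ bound for $\Gamma$ when $u$ is additionally concave, because $u \in W^{1,1}$ permits $u'(0^+)$ or $u'(1^-)$ to be infinite, in which case $\int_0^1 |u''|\,dy = \infty$ and any naive componentwise estimate of $\Gamma''$ will fail. The rescue is the arclength identity
\[
|\Gamma''(s)| = \frac{|u''(\phi(s))|}{(1+u'(\phi(s))^2)^{3/2}} = |\kappa(\phi(s))|,
\]
readily derived from the explicit formulas for $\phi''$ and $(u\circ\phi)''$ above. Changing variables back yields
\[
\int_0^L |\Gamma''(s)|\,ds = \int_0^1 \frac{|u''(y)|}{1+u'(y)^2}\,dy,
\]
and concavity ($u'' \leq 0$) lets us drop the absolute value and recognize the integrand as $-\frac{d}{dy}\arctan u'(y)$, so the integral telescopes to $\arctan u'(0^+) - \arctan u'(1^-) \leq \pi$. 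Combined with the trivial bounds $|\Gamma|, |\Gamma'| \leq C$ on $(0,L)$, this yields $\Gamma \in W^{2,1}(0,L)$ and finishes the proof.
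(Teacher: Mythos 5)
Your proposal is correct, and it takes a genuinely different (and more direct) route than the paper. The paper constructs $\phi$ by solving the ODE: since Picard--Lindelöf does not apply directly, it mollifies $u'$, solves the regularized ODE globally, extracts a weak $W^{1,2}$-limit, proves pointwise convergence of $(u'*\rho_\epsilon)\circ\phi_\epsilon$, and only afterwards identifies $L$ via the identity $\lambda(E)=\int_0^1\sqrt{1+u'^2}\,dx$. You bypass all of this by writing down the solution explicitly as $\phi=\sigma^{-1}$ with $\sigma(x)=\int_0^x\sqrt{1+u'^2}$; since $\sigma\in C^1$ with $\sigma'\ge 1$ and $\sigma\in W^{1,1}$ by the hypothesis $u\in W^{1,1}$, everything in the first part (the ODE, the boundary values, invertibility in $W^{1,1}$, the weak reparametrization via $\Gamma\circ\sigma=\iota$) follows at once. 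For the final $W^{2,1}$ bound the paper estimates $\int|\phi''|$ and $\int|(u\circ\phi)''|$ separately, introducing the antiderivatives $H(z)=\int_0^z|y|(1+y^2)^{-3/2}dy$ and $z/\sqrt{1+z^2}$ and using monotonicity of $u'$; your observation that $|\Gamma''|=|\kappa\circ\phi|$ and hence $\int_0^L|\Gamma''|=\int_0^1\frac{-u''}{1+u'^2}=\arctan u'(0^+)-\arctan u'(1^-)\le\pi$ packages both estimates into the single geometric statement that a concave graph turns by at most $\pi$, and it dominates each component since $|\phi''|,|(u\circ\phi)''|\le|\Gamma''|$. The only place where you are slightly more casual than the paper is the pointwise differentiation of the ODE to get $\phi''$ when $u''$ is merely $L^2_{loc}$; the paper justifies this by an integration-by-parts/change-of-variables computation against test functions, but on compact subintervals $\phi$ is a $C^1$ diffeomorphism with derivative bounded above and below, so the Sobolev chain rule applies and this is a routine remark rather than a gap.
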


\subsection{Regularity for some variational problems}

We shall see later, that a minimizing sequence for the trace-length problem will have a limit in $BV(0,1)$. However, we need more regularity to assure that such a minimizer is admissible for $\mathcal{E}_\epsilon$. Luckily, we will see that a minimizer of the trace-length problem is also concave. This will imply $W^{1,1}$-regularity of the minimizer. Further regularity for the length problem for graphs is investigated at the end of this subsection. The proof is technical and to be found in the appendix.
 
\begin{lemma}\label{lem:l1} (Regularity for problems in $BV(0,1)$) 

Assume $u \in BV(0,1)\cap W^{1,p}_{loc}(0,1) $ for some $p \in (1, \infty]$. Then $u \in W^{1,1}(0,1)$. Especially, if $u \in BV(0,1)$ is concave, then $u \in W^{1,1}(0,1)$.
\end{lemma}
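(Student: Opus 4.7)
The plan is to exploit the uniqueness of the Lebesgue decomposition of the distributional derivative. Since $u\in BV(0,1)$, its distributional derivative $Du$ is a finite Radon measure on $(0,1)$ which decomposes as $Du=u_{ac}'\,d\mathcal{L}^1+D^{s}u$, where $u_{ac}'\in L^{1}(0,1)$ and $D^{s}u$ is singular with respect to Lebesgue measure. The key observation is that the $W^{1,p}_{loc}$ assumption forces the singular part to vanish.

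First I would fix $[a,b]\Subset(0,1)$. Since $u\in W^{1,p}_{loc}(0,1)$ with $p>1$, the distributional derivative of $u$ on $(a,b)$ is represented by an $L^{p}(a,b)$-function $g_{[a,b]}$, hence the measure $Du\restriction(a,b)$ is absolutely continuous with respect to Lebesgue. By uniqueness of the Lebesgue decomposition, $D^{s}u\restriction(a,b)=0$ and $u_{ac}'=g_{[a,b]}$ a.e. on $(a,b)$. Writing $(0,1)=\bigcup_{n\geq 2}[1/n,1-1/n]$ and using $\sigma$-additivity of $|D^{s}u|$, I conclude $D^{s}u\equiv 0$ on $(0,1)$. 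Thus $Du=u_{ac}'\,d\mathcal{L}^1$ with $\int_{0}^{1}|u_{ac}'|\,dx=|Du|((0,1))<\infty$, so $u_{ac}'\in L^{1}(0,1)$ and consequently $u\in W^{1,1}(0,1)$ (after identifying $u$ with its absolutely continuous representative, which exists precisely because $D^{s}u=0$ rules out both jump and Cantor parts in the one-dimensional $BV$ decomposition).

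For the ``especially'' part, I would reduce to the first assertion by showing that any concave function on an open interval is locally Lipschitz. This is standard convex analysis: for $[a,b]\Subset(0,1)$ and any $x\in[a,b]$ the concavity inequality, together with finiteness of $u$ at any pair of points strictly outside $[a,b]$ in $(0,1)$, forces the left and right derivatives of $u$ to be bounded on $[a,b]$ by a constant depending only on the chosen outer points. Hence $u\in W^{1,\infty}_{loc}(0,1)\subset W^{1,p}_{loc}(0,1)$ for every $p\in(1,\infty]$, and the first part applies.

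The only delicate point is verifying the vanishing of the singular part cleanly: one has to know that $W^{1,p}_{loc}$-regularity on every compact sub-interval genuinely implies $|D^{s}u|([1/n,1-1/n])=0$, which is immediate once $Du$ is interpreted as a measure and tested against smooth compactly supported functions. The rest consists of invoking standard one-dimensional $BV$ theory, so I do not expect any substantial obstacle.
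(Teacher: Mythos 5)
Your argument is correct, but it runs along a different track than the paper's. The paper proves the first assertion by a direct duality estimate: for a compact $K\subset(0,1)$ it mollifies $\mathrm{sgn}(u')\chi_K$ to obtain admissible test functions $\phi_\epsilon$ with $\|\phi_\epsilon\|_\infty\leq 1$, integrates by parts, and passes to the limit (using $\phi_\epsilon\to\mathrm{sgn}(u')\chi_K$ in $L^{q}$ with $q$ conjugate to $p$, which is exactly where $p>1$ enters) to get $\int_K|u'|\,dx\leq\int|Du|$; exhausting $(0,1)$ by compacta and applying monotone convergence then gives $u'\in L^1(0,1)$. You instead invoke the Lebesgue decomposition $Du=u_{ac}'\,d\mathcal{L}^1+D^su$ and kill the singular part locally by uniqueness of the decomposition, since $W^{1,p}_{loc}$-regularity makes $Du$ absolutely continuous on every compact subinterval; $\sigma$-additivity of $|D^su|$ then gives $D^su\equiv 0$ globally. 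Both routes are sound. The paper's argument is more elementary and self-contained (it needs only the definition of the total variation and standard mollifier convergence, and produces the bound $\|u'\|_{L^1}\leq|Du|((0,1))$ directly), whereas yours leans on the structure theory of one-dimensional $BV$ functions but is conceptually cleaner and identifies $Du$ exactly, giving the equality $\|u'\|_{L^1}=|Du|((0,1))$ and the absolutely continuous representative for free. Your treatment of the ``especially'' clause — concave implies locally Lipschitz, hence $W^{1,\infty}_{loc}$, then apply the first part — is exactly how the paper uses the lemma elsewhere, so no issue there.
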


%
%

\begin{lemma}\label{lem:rgulang} (Regularity for the length problem) 

 Assume that $\psi \in W^{1,\infty}(a,b)$ is such that $\psi'' \in L^2_{loc}(a,b)$, and $d_1, d_2 \in \mathbb{R}$ are such that $\psi(a) < d_1 $ and $\psi(b) < d_2 $. Define
\begin{equation*}
M_1 := \left\lbrace  v \in W^{1, \infty}(a, b) , v(a) = d_1 , v(b) = d_2 , v(x) \geq \psi(x) , \; x \in (a,b) \right\rbrace.
\end{equation*} 
Assume that $u \in M_1$ is such that 
\begin{equation*}
\int_{a}^{b} \sqrt{1+ u'^2} dx = \inf_{ v \in M_1} \int_{a}^{b} \sqrt{1+ v'^2} dx .
\end{equation*}
Then $u$ is the unique solution of 
\begin{equation}\label{eq:laengenvarungl}
\int_{a}^{b} \frac{u'}{\sqrt{1+ u'^2}} ( v' - u') dx \geq 0 \quad \forall v \in M_1 . 
\end{equation}
Additionally, $||u'||_\infty \leq \max\{||\psi'||_\infty, \frac{|d_2- d_1| }{b - a} \} $ and $u'' \in L^2_{loc}(a,b) $.  
\end{lemma}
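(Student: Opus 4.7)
The plan is to establish the assertions of the lemma in order. For the variational inequality, the set $M_1$ is convex and $s \mapsto \sqrt{1+s^2}$ is strictly convex, so the standard convex analysis argument applies: for $v \in M_1$ and $t \in [0,1]$ the perturbation $u + t(v-u)$ lies in $M_1$, and differentiating $t \mapsto \int_a^b \sqrt{1+(u'+t(v'-u'))^2}\,dx$ at $t=0^+$ (dominated convergence suffices since $u', v' \in L^\infty$) yields \eqref{eq:laengenvarungl}. Uniqueness follows from strict monotonicity of $s \mapsto s/\sqrt{1+s^2}$: testing the VI of two solutions $u_1, u_2$ against each other and adding gives
\begin{equation*}
\int_a^b \left( \frac{u_1'}{\sqrt{1+u_1'^2}} - \frac{u_2'}{\sqrt{1+u_2'^2}} \right)(u_1' - u_2')\,dx \leq 0,
\end{equation*}
forcing $u_1' = u_2'$ a.e.\ and hence $u_1 \equiv u_2$ via the matching boundary data.

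For the $L^\infty$ bound on $u'$, I first establish concavity of $u$ by testing \eqref{eq:laengenvarungl} with $v = u + \phi$ for $\phi \in C_c^\infty(a,b)$, $\phi \geq 0$ (admissible since $u + \phi \geq u \geq \psi$): this gives $(u'/\sqrt{1+u'^2})' \leq 0$ distributionally, so $u'$ is non-increasing. Set $C := \max\{\|\psi'\|_\infty, |d_2-d_1|/(b-a)\}$. The key observation is that on any maximal subinterval of $\{u > \psi\}$, two-sided compactly supported test perturbations are admissible, reducing \eqref{eq:laengenvarungl} to the unconstrained Euler--Lagrange equation and forcing $u$ to be affine there. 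Writing $y_1 := \inf\{x \in (a,b]\,:\,u(x) = \psi(x)\}$ (with $y_1 = b$ if the set is empty), the common slope $m$ of $u$ on $[a,y_1]$ equals $(d_2-d_1)/(b-a)$ when $y_1 = b$, and otherwise satisfies $m \leq \partial_-\psi(y_1) \leq \|\psi'\|_\infty$ because $u - \psi$ decreases to zero from the left at $y_1$. In both cases $u'(a^+) = m \leq C$, and concavity of $u$ also gives $u'(a^+) \geq (d_2-d_1)/(b-a) \geq -C$. A mirrored argument at $b$ bounds $u'(b^-)$, and monotonicity of $u'$ propagates the bound to all of $(a,b)$.

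To handle $u'' \in L^2_{loc}(a,b)$, I first show $u \in C^1(a,b)$. The hypothesis $\psi'' \in L^2_{loc}$ yields $\psi' \in W^{1,2}_{loc} \hookrightarrow C^0$, so $\psi'$ is continuous. For any contact point $c \in \Omega := \{u = \psi\}$, the tangent-line inequality for concave $u$ gives $u(x) \leq u(c) + u'(c^+)(x-c)$ globally; combined with $u \geq \psi$ this forces $(\psi(x)-\psi(c))/(x-c) \leq u'(c^+)$ for $x > c$ and the reverse for $x < c$, so letting $x \to c$ and using continuity of $\psi'$ yields $u'(c^+) = \psi'(c)$, and a symmetric argument gives $u'(c^-) = \psi'(c)$. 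On each component of $(a,b) \setminus \Omega$, $u$ is affine and hence $C^\infty$. Thus $u \in C^1(a,b)$ with $u' \equiv \psi'$ on $\Omega$. Writing $g := u' - \psi'$, one has $g = 0$ on $\Omega$, while $g$ is $C^0$ with distributional derivative $-\psi''$ on each component of $(a,b) \setminus \Omega$ and vanishes at the endpoints of such a component; integration by parts component by component identifies the distributional derivative $g' = -\psi'' \chi_{(a,b) \setminus \Omega}$ as a function, so $u'' = \psi'' + g' = \psi'' \chi_\Omega \in L^2_{loc}(a,b)$.

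The main technical obstacle is the potentially pathological structure of the coincidence set $\Omega$, which is a priori only closed and could contain a fat-Cantor-type part with boundary of positive measure. The decisive fact which defuses any such pathology is the global $C^1$-regularity of $u$ derived above: it rules out any singular (jump or Cantor) component in the distributional derivative of $u'$ at the free boundary, so that the pointwise identity $u'' = \psi'' \chi_\Omega$ truly holds as a function in $L^2_{loc}$, regardless of the fine structure of $\Omega$.
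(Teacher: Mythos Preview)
Your argument is correct. The variational inequality, uniqueness, and the $L^\infty$ bound on $u'$ are obtained in essentially the same spirit as in the paper (convexity of $M_1$, affine behaviour on the non-contact set), though you organize the gradient bound through concavity and the endpoint slopes rather than through the paper's contradiction argument on a single point where $|u'|>\|\psi'\|_\infty$.

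The substantive difference is in the proof of $u''\in L^2_{loc}$. The paper uses the classical difference-quotient technique: it tests the variational inequality with $w_\epsilon=u+\epsilon\,\Delta_{-h}(\eta^2\Delta_h(u-\psi))$, exploits the monotonicity bound $\frac{1}{(1+C^2)^{3/2}}|\Delta_h u'|\le |\Delta_h(u'/\sqrt{1+u'^2})|\le|\Delta_h u'|$, and absorbs via the Peter--Paul inequality to obtain a uniform $L^2$ bound on $\Delta_h u'$. Your route is instead structural: you first establish $u\in C^1$ directly from concavity and the continuity of $\psi'$, then observe that $g:=u'-\psi'$ is continuous, vanishes on the coincidence set $\Omega$, and has weak derivative $-\psi''$ on each component of the complement with zero boundary traces; the componentwise integration by parts then gives $g'=-\psi''\chi_{(a,b)\setminus\Omega}$ and hence $u''=\psi''\chi_\Omega\in L^2_{loc}$. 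This yields the sharper pointwise identity $u''=\psi''\chi_\Omega$ for free and is more elementary, but it relies on the one-dimensional structure (affine on each non-contact interval, countable decomposition into intervals) in a way that the difference-quotient method does not; the paper's approach is the one that generalizes to higher-dimensional obstacle problems.
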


\subsection{The Trace-Length Problem}
\begin{definition}($\cap$-shape)\label{defi:panettone}

Fix $\gamma_0 \in P_\psi$. We define 
\begin{align*}
M_{\gamma_0} := & \{ \gamma \in W^{1,1}((0,1);\mathbb{R}^2) \; | \;   \gamma(0)=(0,0)^T , \gamma(1) = (1,0)^T, \\* & \qquad \exists \; 0 \leq \beta_1 < \beta_2 \leq 1 : (\gamma_1)_{\mid_{ [0, \beta_1]}} = 0, (\gamma_1)_{\mid_ {[\beta_2,1]}} = 1 , \\* & \qquad (\gamma_1)_{\mid_{(\beta_1,\beta_2)}} \; \textrm{is invertible in $W^{1,1}$ and on $(\beta_1,\beta_2)$} \\* & \qquad  
 u:= \gamma_2 \circ \gamma_1^{-1} \; \textrm{satisfies } u \circ \gamma_{0,1} \geq \gamma_{0,2} \}  
\end{align*}
where by $\gamma_1^{-1}$ in the last line we mean the inverse on $(\beta_1,\beta_2)$ and call elements of $M_{\gamma_0}$ \emph{$\cap$-shaped curves} lying above $\gamma_0$. Any $\gamma \in P_\psi \cap M_{\gamma_0}$ is called \emph{pseudograph-$\cap$-shape} above $\gamma_0$ and we denote the set of all such curves by $B_{\gamma_0} $. If $u$ is concave, we call $\gamma$ \emph{concave on the top}. The function $u$ is called the \emph{graph reparametrization} associated to $\gamma$.
 \end{definition}

From now on, we fix a pseudograph $\gamma_0 \in W^{1,1}((0,1);\mathbb{R}^2) \cap W^{2,1}_{loc}( (0,1); \mathbb{R}^2) $ and intend to show that there exists $\gamma \in M_{\gamma_0} $ such that $\mathcal{L}(\gamma) = \inf_{\tau \in M_{\gamma_0}} \mathcal{L}(\tau)$. As it turns out, such a $\gamma$ lies in $B_{\gamma_0}$  (defined as in Theorem \ref{thm:panettone}) and will be the curve constructed in Theorem \ref{thm:panettone}. We will first show the regularity and postpone existence of $\gamma$. 

\subsection{Regularity of a Minimizer of the Trace-Length Problem}

\begin{prop}\label{prop:vartop}(A variational inequality on the top) 

Let $\gamma \in M_{\gamma_0} $ be a minimizer of $\mathcal{L}$ in $M_{\gamma_0} $ with $\beta_1, \beta_2 $ and $u$ be as in Definition \ref{defi:panettone}. Then for each nonnegative $\phi \in C_0^\infty(0,1) $  it holds that
\begin{equation}\label{eq:vartop}
\int_0^1 \frac{u'}{\sqrt{1+u'^2}} \phi' dx \geq 0 .
\end{equation}
Additionally, the set $U :=  \{ x \in (0,1) | (x,u(x)) \not \in  \gamma_0([0,1]) \}  $ is open, $u \in C^\infty(U) $  and satisfies $u'' = 0 $ in $U$.   
\end{prop}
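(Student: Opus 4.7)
The plan is to obtain the variational inequality from one-sided (upward) perturbations of the graph reparametrization $u$, and then the Euler--Lagrange equation on $U$ from two-sided perturbations supported away from the coincidence set. The key observation is that within a $\cap$-shape only the arclength of the top depends on $u$, so the analysis reduces to a classical minimal-length problem for $u$ subject to the obstacle constraint.

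For \eqref{eq:vartop}, given a nonnegative $\phi\in C_0^\infty(0,1)$ and $\lambda\geq 0$, I would construct a competitor $\gamma_\lambda\in M_{\gamma_0}$ that coincides with $\gamma$ on the vertical pieces $[0,\beta_1]\cup[\beta_2,1]$ and on $[\beta_1,\beta_2]$ is defined by
\[
\gamma_\lambda(t):=(\gamma_1(t),(u+\lambda\phi)(\gamma_1(t))).
\]
Since $\phi(0)=\phi(1)=0$, the endpoints match $(0,u(0))$ and $(1,u(1))$ so that global $W^{1,1}$-continuity holds; regularity of the composition follows from Proposition \ref{prop:p16}; and $\phi\geq 0$ preserves the constraint $u\circ\gamma_{0,1}\geq\gamma_{0,2}$. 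Applying the change of variables $x=\gamma_1(t)$ on the top gives
\[
\mathcal{L}(\gamma_\lambda)-\mathcal{L}(\gamma)=\int_0^1\sqrt{1+(u'+\lambda\phi')^2}\,dx-\int_0^1\sqrt{1+u'^2}\,dx.
\]
Minimality, division by $\lambda>0$, and dominated convergence (using $|s|/\sqrt{1+s^2}\leq 1$) then deliver \eqref{eq:vartop}.

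Next, $u=\gamma_2\circ\gamma_1^{-1}$ is continuous on $[0,1]$ by Proposition \ref{prop:p16}, so $\iota\colon x\mapsto(x,u(x))$ is continuous into $\mathbb{R}^2$. Since $\gamma_0([0,1])$ is compact, its complement is open, and $U=\iota^{-1}(\mathbb{R}^2\setminus\gamma_0([0,1]))$ is open. For $u''\equiv 0$ on $U$, fix a compact $K\subset U$ and set $F:=\{t\in[0,1]:\gamma_{0,1}(t)\in K\}$, which is compact. The continuous gap function $f(t):=u(\gamma_{0,1}(t))-\gamma_{0,2}(t)$ is strictly positive on $F$: if $f(t_0)=0$ then $\gamma_0(t_0)=(\gamma_{0,1}(t_0),u(\gamma_{0,1}(t_0)))$ lies on the graph of $u$ at a first coordinate in $K\subset U$, contradicting the definition of $U$. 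Hence $m:=\min_F f>0$. For $\phi\in C_0^\infty(K)$ with $|\lambda|\,\|\phi\|_\infty<m$, both $u\pm\lambda\phi$ satisfy the obstacle constraint, and the construction above yields competitors $\gamma_{\pm\lambda}\in M_{\gamma_0}$. Minimality in both signs gives
\[
\int_0^1\frac{u'\phi'}{\sqrt{1+u'^2}}\,dx=0\qquad\text{for every }\phi\in C_0^\infty(U),
\]
i.e. $(u'/\sqrt{1+u'^2})'\equiv 0$ in $\mathcal{D}'(U)$. Since the map $s\mapsto s/\sqrt{1+s^2}$ is a smooth diffeomorphism of $\mathbb{R}$ onto $(-1,1)$, $u'$ is locally constant on $U$, so $u$ is affine on each component, hence $u\in C^\infty(U)$ with $u''\equiv 0$.

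The delicate point is the strict positivity $m>0$ of the gap function on the compact set $F$; this is the mechanism that upgrades the one-sided variational inequality on all of $(0,1)$ into the full Euler--Lagrange equation on the free set $U$. Everything else is bookkeeping using the Sobolev-composition results in Section 4.1.
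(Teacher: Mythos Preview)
Your proof is correct and follows essentially the same approach as the paper: construct competitors by perturbing the graph reparametrization $u$ (upward for the variational inequality, two-sided on the free set $U$), differentiate the length, and use injectivity of $s\mapsto s/\sqrt{1+s^2}$ to deduce $u''=0$ on $U$. Your treatment of the strict positivity of the gap on $F=\gamma_{0,1}^{-1}(K)$ is in fact more carefully stated than the paper's informal distance argument, and you explicitly verify that $U$ is open, which the paper leaves implicit.
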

\begin{proof}
Let $\phi \in C_0^\infty(0,1) $ be arbitrary. Define for $s \in \mathbb{R}$ 
\begin{equation*}
\tau_s(t) := \begin{cases} \gamma(t) & t \in [0, \beta_1] \\ (\gamma_1(t) , (u + s\phi)(\gamma_1(t)))^T  & t \in (\beta_1, \beta_2) \\ \gamma(t) & t \in [\beta_2,1] \end{cases}.
\end{equation*}
For the first part of the claim, assume that $\phi \geq 0 $. Let us check that $\tau_s \in M_{\gamma_0} $ for each $s \geq 0 $.
First of all $\tau_s \in W^{1,1}((0,1);\mathbb{R}^2)$ because of \cite[Section 4.2.2]{Evansgariepy}. The remaining conditions are straightforward to check. 
 
Now 
\begin{eqnarray*}
0  & \leq &  \frac{d}{ds}_{\mid_{s = 0 }} \mathcal{L}(\tau_s) \\ & = &  \frac{d}{ds}_{\mid_{s = 0 }} \left( \gamma_2(\beta_1)+ \gamma_2(\beta_2) + \int_0^1 \sqrt{1 + (u'(x) + s \phi'(x))^2} dx \right) \\ & =& \int_0^1 \frac{u'(x)}{\sqrt{1+ u'(x)^2}} \phi'(x) dx ,
\end{eqnarray*}
which proves the first part of the claim. 

For the second part we take $\phi \in C_0^\infty(U)$ with arbitrary sign. Notice that there exists $\epsilon > 0 $ such that for $|s | < \epsilon$ it holds that $\tau_s \in M_{\gamma_0}$.  This is true since continuity of $u$, compactness of $\mathrm{supp}(\phi)$, and compactness of $\gamma_0([0,1])$ yield that $\mathrm{dist}( u(\mathrm{supp} \phi ), \gamma_0([0,1]) ) > 0 $. With a computation similar to the last one we obtain 
\begin{equation*}
0 = \frac{d}{ds}_{\mid_{s = 0 }} \mathcal{L}(\tau_s) = \int_0^1 \frac{u'(x)}{\sqrt{1+ u'(x)^2}} \phi'(x) dx.
\end{equation*}
Therefore $\frac{u'}{\sqrt{1+ u'^2}}$ is constant almost everywhere on every connected component of $U$ and since $ z \mapsto \frac{z}{\sqrt{1+z^2}}$ is injective, it follows that $u'$  is constant almost everywhere on every connected component of $U$. Equivalently, $u$ has a $C^\infty(U)$-representative and $u '' \equiv 0 $ a.e. on $U$. 
\end{proof}

\begin{prop}\label{prop:c1regu} ($C^{1}$-regularity on the top)

Let $\gamma \in M_{\gamma_0} $ be a minimizer with associated graph reparametrization $u$. Then $u \in C^1(0,1)$. Moreover, for each $t \in (0,1)$ such that $u( \gamma_{0,1}(t))  = \gamma_{0,2}(t) $ it must hold that $\gamma_{0,1}'(t) > 0 $. 
\end{prop}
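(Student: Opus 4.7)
The plan is to derive both claims from the variational inequality of Proposition~\ref{prop:vartop} together with the pseudograph regularity of $\gamma_0$ and its immersion property. First I would read off concavity of $u$: interpreting \eqref{eq:vartop} in the distributional sense gives $(u'/\sqrt{1+u'^2})' \leq 0$, and since $z \mapsto z/\sqrt{1+z^2}$ is strictly increasing, this forces $u'$ to be non-increasing, so $u$ is concave on $(0,1)$. In particular, $u$ admits finite one-sided derivatives $u'(x^-) \geq u'(x^+)$ at every interior point $x \in (0,1)$ and is locally Lipschitz there.

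Next I would prove the second statement by contradiction. Suppose $\gamma_{0,1}'(t_0) = 0$ at some contact point $t_0 \in (0,1)$, and set $x_0 := \gamma_{0,1}(t_0)$ together with $f(t) := u(\gamma_{0,1}(t)) - \gamma_{0,2}(t)$. Admissibility of $\gamma$ gives $f \geq 0$ near $t_0$ and $f(t_0) = 0$, so $t_0$ is a local minimum of $f$. If $x_0 \in (0,1)$ is interior, write $\Delta_h := \gamma_{0,1}(t_0 + h) - x_0$; by the monotonicity of $\gamma_{0,1}$ from the pseudograph property, $\Delta_h$ has the same sign as $h$, and $\Delta_h/h \to \gamma_{0,1}'(t_0) = 0$. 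Combined with the finiteness of $u'(x_0^\pm)$, this yields
\[
\frac{u(\gamma_{0,1}(t_0 + h)) - u(x_0)}{h} \;=\; \frac{u(x_0 + \Delta_h) - u(x_0)}{\Delta_h} \cdot \frac{\Delta_h}{h} \;\longrightarrow\; 0
\]
as $h \to 0^\pm$, so $f$ is two-sidedly differentiable at $t_0$ with $f'(t_0) = -\gamma_{0,2}'(t_0)$. Minimality then forces $\gamma_{0,2}'(t_0) = 0$, contradicting the immersion condition $|\gamma_{0,1}'(t_0)|^2 + |\gamma_{0,2}'(t_0)|^2 > 0$. If instead $x_0 \in \{0,1\}$, monotonicity of $\gamma_{0,1}$ together with $\gamma_{0,1}(t_0) = x_0$ forces $\gamma_{0,1} \equiv x_0$ on one side of $t_0$; admissibility then makes $\gamma_{0,2}$ attain a local extremum at $t_0$ on that side, and continuity of $\gamma_{0,2}'$ (from $\gamma_0 \in W^{2,1}_{loc}$) again gives $\gamma_{0,2}'(t_0) = 0$ and the same immersion contradiction.

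For the $C^1$ claim: on the open set $U$ from Proposition~\ref{prop:vartop} one has $u'' \equiv 0$, so $u$ is affine on each connected component. At a contact point $x_0 \in (0,1) \setminus U$ with $\gamma_0(t_0) = (x_0, u(x_0))$, the step just proved gives $\gamma_{0,1}'(t_0) > 0$, and by continuity $\gamma_{0,1}' > 0$ in a neighborhood of $t_0$. Hence near $x_0$ the obstacle coincides with the graph of $\psi_0 := \gamma_{0,2} \circ \gamma_{0,1}^{-1}$, which lies in $W^{2,2}_{loc} \subset C^1$ by the pseudograph definition. Since $u \geq \psi_0$ near $x_0$ with equality at $x_0$, the function $u - \psi_0$ has a local minimum at $x_0$, giving $u'(x_0^-) \leq \psi_0'(x_0) \leq u'(x_0^+)$; concavity of $u$ supplies the reverse inequality, so $u'(x_0^-) = u'(x_0^+) = \psi_0'(x_0)$, proving $u \in C^1(0,1)$. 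The main delicate point will be the boundary subcase $x_0 \in \{0,1\}$ in the second step, where $u'$ may blow up and the chain-rule-like computation has to be replaced by the monotonicity/extremality argument for $\gamma_{0,2}$ sketched above.
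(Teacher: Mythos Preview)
Your argument is correct and follows the same underlying logic as the paper: extract concavity of $u$ from \eqref{eq:vartop}, then at a contact point combine the inequality coming from ``$u$ sits above $\gamma_0$'' with the reverse inequality from concavity to match the one-sided derivatives, and observe that the immersion of $\gamma_0$ rules out $\gamma_{0,1}'(t_0)=0$. The paper obtains the crucial two-sided bound $u'(\gamma_{0,1}(t)+0)\,\gamma_{0,1}'(t)\ge\gamma_{0,2}'(t)\ge u'(\gamma_{0,1}(t)-0)\,\gamma_{0,1}'(t)$ via a Radon-measure representation of $(u'/\sqrt{1+u'^2})'$ and the coarea formula, whereas you reach the same conclusion more directly: a bounded difference-quotient computation for $t\mapsto u(\gamma_{0,1}(t))$ when $\gamma_{0,1}'(t_0)=0$, and a local-minimum argument for $u-\psi_0$ once $\gamma_{0,1}'(t_0)>0$ is known. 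This is a genuine streamlining---you never need the coarea formula---and your reordering (prove the second assertion first, then use it to identify the local $C^1$ obstacle $\psi_0$) makes the $C^1$ step transparent.

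Two small remarks. First, your boundary case $x_0\in\{0,1\}$ is in fact not needed: the hypothesis $u(\gamma_{0,1}(t))=\gamma_{0,2}(t)$ presupposes $\gamma_{0,1}(t)$ lies in the domain $(0,1)$ of $u$, and the paper's proof likewise works only there. Your sketch for that case (``$\gamma_{0,2}$ attains a one-sided extremum, hence $\gamma_{0,2}'(t_0)=0$'') would only give $\gamma_{0,2}'(t_0)\ge 0$, so it is best simply to drop this subcase. Second, after showing $u'(x_0^-)=u'(x_0^+)$ at every point, you should note explicitly that concavity then forces $u'$ to be continuous (a differentiable concave function is automatically $C^1$), so that $u\in C^1(0,1)$ follows.
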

\begin{proof}
Thanks to Proposition \ref{prop:vartop} and \cite[Section 1.8, Corollary 1]{Evansgariepy}, there is a Radon measure $\mu$ in $\mathcal{B}(0,1)$ such that 
\begin{equation*}
\int_0^1 \frac{u'(s) }{(1+ u'(s)^2)^\frac{1}{2}} \phi'(s) ds = \int_0^1 \phi  \; d\mu \quad \forall \phi \in C_0^\infty(0,1) .
\end{equation*}
Now fix $\epsilon > 0 $. Since $\mu$ is Radon, it holds that $\mu ( (\epsilon, 1- \epsilon) ) < \infty $ and in case that $\phi \in C_0^\infty (\epsilon, 1- \epsilon) $ we can rearrange the right hand side using Fubini's theorem: 
\begin{equation*}
\int_{\epsilon}^{1- \epsilon} \phi d\mu = \int_{\epsilon}^{1 - \epsilon } \left( \int_{\epsilon}^x \phi'(s) ds \right) d\mu(x) = \int_{\epsilon}^{1- \epsilon } \int_s^{1-\epsilon} d\mu(x) \phi'(s) ds.  
\end{equation*}
Eventually, 
\begin{equation*}
\int_{\epsilon}^{1- \epsilon} \frac{u'(s) }{(1+ u'(s)^2)^\frac{1}{2}} \phi'(s) ds = \int_\epsilon^{1- \epsilon} \mu((s,1- \epsilon)) \phi'(s) ds \quad \forall \phi \in C_0^\infty(0,1) 
\end{equation*}
such that almost everywhere on $(\epsilon, 1- \epsilon)$ 
\begin{equation}\label{eq:gl11}
\frac{u'(s)}{\sqrt{1+ u'(s)^2}} = C_0 +  \mu ((s, 1- \epsilon) ),
\end{equation}
for some constant $C_0 \in \mathbb{R}$. 
Since the mapping $ z \mapsto \frac{z}{\sqrt{1+z^2}}$ is strictly monotone, we can infer, after choice of a representative, that $u'$ is decreasing and therefore has left- and right-sided limits satisfying $u'(s+ 0 ) \leq u'(s-0)$ almost everywhere for each $s \in (\epsilon, 1- \epsilon)$. Choosing $\epsilon = \frac{1}{m}$ and taking the countable union we obtain that $u'$ is decreasing a.e. on $(0,1)$ and $u'(s+0) \leq u'(s-0)$ for each $s \in (0,1)$. Let $U$ be as in Proposition \ref{prop:vartop}. If $s \in U$, the limits coincide because of the second part of the very same proposition. Our goal next is to derive that 
\begin{equation}\label{eq:unglinbeidesein}
u'(\gamma_{0,1}(t) + 0) \gamma_{0,1}'(t) \geq \gamma_{0,2}'(t) \geq u'(\gamma_{0,1}(t) - 0) \gamma_{0,1}'(t) \quad \forall t : u( \gamma_{0,1}(t) ) = \gamma_{0,2}(t) .
\end{equation}  
Using that $\gamma_{0,1} $ is $C^1(0,1)$ and therefore locally Lipschitz, we can use the version of the coarea formula provided in \cite[Section 3.4.3]{Evansgariepy} to find for each fixed $t \in (0,1) \setminus U$: 
\begin{align}\label{eq:2}
& u'(\gamma_{0,1}(t) + 0 ) \gamma_{0,1}'(t)  =  \lim_{h \rightarrow 0+} \frac{1}{h} \int_t^{t+h} u'(\gamma_{0,1}(s)) \gamma_{0,1}'(s) ds \nonumber  
\\ &   \qquad = \lim_{h \rightarrow 0 +} \frac{1}{h} \int_{\gamma_{0,1}(t)}^{\gamma_{0,1}(t+h)}  u'(w) \mathcal{H}^0(\gamma_{0,1}^{-1} (\{ w \} ) ) dw \nonumber =  \lim_{h \rightarrow 0 + } \frac{1}{h} \int_{\gamma_{0,1}(t)}^{\gamma_{0,1}(t+h)}  u'(w)  dw \nonumber  \\ & \qquad  =  \lim_{h \rightarrow 0 +} \frac{u(\gamma_{0,1}(t+h) ) - u(\gamma_{0,1}(t))}{h} \nonumber   =  \lim_{h \rightarrow 0 +} \frac{u(\gamma_{0,1}(t+h) ) - \gamma_{0,2}(t)}{h} \nonumber \\  &  \qquad \geq  \liminf_{h \rightarrow 0 +} \frac{\gamma_{0,2}(t+h)- \gamma_{0,2}(t) }{h} = \gamma_{0,2}'(t).
\end{align}
The remaining inequality in \eqref{eq:unglinbeidesein} can be shown similarly. If we now assume that $\gamma_{0,1}'(t)=0 $ then \eqref{eq:unglinbeidesein} would imply that $\gamma_{0,2}'(t) = 0$ which is a contradiction to the immersedness of $\gamma_0$. Therefore $\gamma_{0,1}'(t) > 0 $ and $u'(\gamma_{0,1}(t) + 0) \geq u'(\gamma_{0,1}(t) - 0 )$. Together with the arguments after \eqref{eq:gl11}, we find that left-sided limit and right sided limit do indeed coincide almost everywhere, even on $(0,1) \setminus U$. This shows the desired regularity result. For the rest of the claim, recall that on the road, right after \eqref{eq:2}, we found that $\gamma_{0,1}'(t) \neq 0 $ for each $t$ such that $u( \gamma_{0,1}(t) )  = \gamma_{0,2}(t) $.  
\end{proof}
\begin{cor}\label{cor:conc} (Concavity on the top) 

Let $\gamma \in M_{\gamma_0}$ be a minimizer of $\mathcal{L}$ with associated graph $u$. Then $u$ is concave.
\end{cor}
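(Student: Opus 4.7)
The plan is to extract concavity directly from the variational inequality \eqref{eq:vartop} established in Proposition \ref{prop:vartop}, together with the $C^1$-regularity proved in Proposition \ref{prop:c1regu}.

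First I would rewrite \eqref{eq:vartop} as a statement about the distributional derivative of the auxiliary function $f(s) := \frac{u'(s)}{\sqrt{1+u'(s)^2}}$. Since $\int_0^1 f \, \phi'\, dx \geq 0 $ for every nonnegative $\phi \in C_0^\infty(0,1)$, the distributional derivative $f'$ is a nonpositive distribution of order zero, hence by the Riesz representation theorem a nonpositive Radon measure on $(0,1)$. In fact, this has essentially been done already inside the proof of Proposition \ref{prop:c1regu}: equation \eqref{eq:gl11} gives, on every interval $(\epsilon, 1 - \epsilon)$, the explicit representation
\begin{equation*}
f(s) = C_0 + \mu((s, 1-\epsilon)) \quad \text{a.e.},
\end{equation*}
for a finite Radon measure $\mu$ on $(\epsilon, 1-\epsilon)$. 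Because $s \mapsto \mu((s, 1-\epsilon))$ is monotone nonincreasing in $s$, the function $f$ admits a monotone nonincreasing representative on $(\epsilon, 1-\epsilon)$, and letting $\epsilon \to 0$ along $\epsilon = 1/m$ yields a monotone nonincreasing representative of $f$ on all of $(0,1)$.

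Next I would push monotonicity from $f$ to $u'$. The real map $z \mapsto \frac{z}{\sqrt{1+z^2}}$ is strictly increasing with continuous inverse, so the composition $u' = f^{-1}\big(f(u')\big) = f^{-1}(f)$ inherits the monotonicity: $u'$ is nonincreasing on $(0,1)$. By Proposition \ref{prop:c1regu}, $u \in C^1(0,1)$, so $u'$ is continuous, and a continuous nonincreasing derivative implies that $u$ is concave on $(0,1)$; the boundary values $u(0) = 0, u(1) = 0$ are irrelevant to concavity on the open interval but follow from $\gamma \in M_{\gamma_0}$, giving concavity on the closure by continuity of $u$.

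The one place where a subtlety could appear is whether the argument is valid on all of $(0,1)$ rather than just away from the endpoints: the Radon measure identity only holds on compactly contained subintervals. I would handle this by noting that monotonicity is a local property — it passes to the union of an increasing exhaustion $(1/m, 1 - 1/m)$ — so no global measure estimate is needed. I expect this to be the only mild technicality; everything else is a direct consequence of the preceding two propositions.
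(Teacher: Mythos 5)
Your proposal is correct and follows essentially the same route as the paper: it uses the representation \eqref{eq:gl11} from Proposition \ref{prop:c1regu} to see that $s \mapsto \frac{u'(s)}{\sqrt{1+u'(s)^2}}$ is nonincreasing on each $(\epsilon,1-\epsilon)$, passes to $(0,1)$ by exhaustion, transfers monotonicity to $u'$ via the strictly increasing map $z \mapsto \frac{z}{\sqrt{1+z^2}}$, and concludes concavity from the continuity of $u'$. The only cosmetic issue is the overloaded use of the symbol $f$ for both the composite function and the real map $z \mapsto \frac{z}{\sqrt{1+z^2}}$ in the expression $u' = f^{-1}(f)$, which should be disentangled, but the argument itself is sound.
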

\begin{proof}
Recall that, according to Proposition \ref{prop:c1regu}, $u' \in C^0(0,1)$. Now, \eqref{eq:gl11} yields that for each $\epsilon > 0 $ , $ x \mapsto \frac{u'(x)}{\sqrt{1+ u'(x)^2}}$ is decreasing  in $(\epsilon, 1- \epsilon) $. Therefore, since $z \mapsto \frac{z}{\sqrt{1+z^2}}$ is increasing, $u'$ is decreasing. Choose $x, y \in (0,1) $ such that $x > y$ and observe that 
\begin{equation}\label{eq:4.15}
u(x) - u(y) = \int_y^x u'(s) ds \leq u'(y) (x-y) .
\end{equation}
A very elementary computation shows that \eqref{eq:4.15} implies concavity.
\end{proof}
\begin{lemma}\label{lem:localregu}(Local $W^{2,2}$-regularity on the top)
 
Let $\gamma \in M_{\gamma_0} $ be a minimizer of $\mathcal{L}$ with associated graph reparametrization $u$ and $\beta_1, \beta_2$ as in Definition \ref{defi:panettone}. Assume that $t \in (\beta_1, \beta_2) $ is such that $ u\circ \gamma_{0,1}(t) = \gamma_{0,2}(t)$. 

Then one of the following assertions is true:
\begin{enumerate}
\item $\gamma_{\mid_{(\beta_1+ \delta,\gamma_1^{-1}( \gamma_{0,1}(t)))}}$ is a (weak) reparametrization of $\gamma_0$ for each $\delta >0 $.
\item $\gamma_{\mid_{(\gamma_1^{-1}( \gamma_{0,1}(t)) ,\beta_2- \delta)}}$ is a (weak) reparametrization of $\gamma_0$ for each $\delta > 0 $.  
\item There exist  $0 < x_1'< \gamma_{0,1}(t) < x_2' < 1$ and $\beta_1 < t_1 < t_2 < \beta_2 $ such that the restriction $\gamma_{0,1} : ( t_1 , t_2) \rightarrow (x_1' , x_2') $ is a diffeomorphism, $x_1' , x_2' $ are not points of contact, and  $u \in W^{2,2}_{loc} (x_1', x_2') $. 
\end{enumerate} 
\end{lemma}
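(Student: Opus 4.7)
The plan is to localize the setup to a neighborhood of $x^\ast := \gamma_{0,1}(t)$ and then distinguish cases according to the way the contact set accumulates at $x^\ast$. First I would use Proposition \ref{prop:c1regu} to get $\gamma_{0,1}'(t) > 0$; combined with the continuity of $\gamma_0'$ (from $\gamma_0 \in W^{2,1}_{loc}$), I obtain a maximal open interval $(t_-,t_+) \ni t$ on which $\gamma_{0,1}' > 0$. On this interval $\gamma_{0,1}: (t_-,t_+) \to (x_-,x_+)$ is a $C^1$-diffeomorphism, and Definition \ref{prop:p10} provides a local graph reparametrization $\psi_0 := \gamma_{0,2} \circ \gamma_{0,1}^{-1} \in W^{2,2}_{loc}(x_-,x_+)$. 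The pseudograph constraint $u\circ\gamma_{0,1} \geq \gamma_{0,2}$ then reads $u\geq\psi_0$ on $(x_-,x_+)$, with equality at $x^\ast$; denote the local contact set by $K := \{x\in(x_-,x_+):u(x)=\psi_0(x)\}$.

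The first case is the generic one, leading to~(3). Assume non-contact points accumulate at $x^\ast$ from both sides, i.e.\ there exist $x_1'\in (x_-,x^\ast)\setminus K$ and $x_2' \in (x^\ast,x_+) \setminus K$, and set $t_i := \gamma_{0,1}^{-1}(x_i')$. By monotonicity of $\gamma_{0,1}$, only $s\in(t_1,t_2)$ satisfy $\gamma_{0,1}(s)\in(x_1',x_2')$, so the effective obstacle on $(x_1',x_2')$ arising from $\gamma_0$ is precisely $\psi_0$. A standard cut-and-paste argument then shows that $u|_{[x_1',x_2']}$ is a length minimizer in
\[
M_1 \,=\, \left\{v\in W^{1,\infty}(x_1',x_2'):\, v(x_1')=u(x_1'),\, v(x_2')=u(x_2'),\, v\geq\psi_0\right\},
\]
for any strictly shorter competitor $\tilde v\in M_1$, glued to the rest of $\gamma$, would produce a curve $\tilde\gamma\in M_{\gamma_0}$ with $\mathcal{L}(\tilde\gamma)<\mathcal{L}(\gamma)$, contradicting minimality. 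The hypotheses of Lemma \ref{lem:rgulang} are then met ($\psi_0\in W^{1,\infty}$ locally, $\psi_0''\in L^2_{loc}$, and $\psi_0(x_i')<u(x_i')$ since $x_i'\notin K$), which yields $u''\in L^2_{loc}(x_1',x_2')$ and hence case~(3).

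If instead no such pair exists, then by symmetry I may assume $u=\psi_0$ on $(x^\ast-\varepsilon,x^\ast)$ for some $\varepsilon>0$, and the goal becomes to derive case~(1). I would define $y^\ast$ as the infimum of those $y\in[0,x^\ast]$ for which the arc $\gamma$ restricted to $[\gamma_1^{-1}(y),\gamma_1^{-1}(x^\ast)]$ remains a weak reparametrization (in the sense of Definition \ref{def:d2}, using Propositions \ref{prop:p11}--\ref{prop:p18} to handle the $W^{1,1}$-reparametrizations) of the sub-arc of $\gamma_0$ ending at parameter $t$. The local contact already furnishes $y^\ast < x^\ast$. If $y^\ast$ coincides with the endpoint of the initial vertical part of $\gamma_0$, case~(1) follows at once. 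Otherwise, the strategy is to replace the "detached" piece of $\gamma$ lying to the left of $y^\ast$ by an appropriately reparametrized arc of $\gamma_0$ and to derive a strict length decrease, contradicting the minimality of $\gamma$. The main obstacle of the proof is precisely this rigidity step: ruling out a detachment at an interior $y^\ast$. A priori, $u$ could be linear on a non-contact interval $(y^\ast-\delta,y^\ast)$ with matching slope $u'(y^\ast) = \psi_0'(y^\ast)$, which is compatible with concavity (Corollary \ref{cor:conc}), the obstacle condition $u\geq\psi_0$, and $C^1$-regularity. Exhibiting a strictly shorter admissible competitor in this configuration seems to require sharp length comparisons between the straight piece and the constrained return arc of $\gamma_0$, together with a delicate treatment of possible vertical portions of $\gamma_0$ in the $W^{1,1}$-framework.
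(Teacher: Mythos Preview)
Your treatment of the generic case leading to (3) is correct and matches the paper: localize via Proposition~\ref{prop:c1regu}, reduce to a graph-obstacle length problem by a cut-and-paste comparison, and apply Lemma~\ref{lem:rgulang}. The gap is in your handling of the degenerate alternative. The paper organizes the trichotomy by contrapositive: it observes that if \emph{every} $p\in(0,t)$ with $\gamma_{0,1}(p)>0$ were a contact point, then Proposition~\ref{prop:c1regu} would force $\gamma_{0,1}'(p)>0$ at all such $p$, so $\gamma_0$ on that whole parameter range is already a graph coinciding with $u$ --- which is precisely assertion~(1). Thus ``not~(1)'' immediately yields a non-contact parameter $p$ on the left, ``not~(2)'' one on the right, and together these supply the endpoints $t_1,t_2$ for~(3). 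No extension argument is ever needed.

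The ``detachment'' scenario you flag as the main obstacle is in fact a phantom. In your case~B the negation of ``$\exists\,x_1'\in(x_-,x^*)\setminus K$'' is the \emph{full} inclusion $(x_-,x^*)\subset K$, not merely $(x^*-\varepsilon,x^*)\subset K$; so $u=\psi_0$ on all of $(x_-,x^*)$. If $x_->0$ then, by maximality of your interval $(t_-,t_+)$, one has $\gamma_{0,1}'(t_-)=0$ and hence (by immersion of $\gamma_0$) $\psi_0'(x)\to\pm\infty$ as $x\to x_-^+$; this contradicts $u\in C^1(0,1)$ at the interior point $x_-$, which you already established via Proposition~\ref{prop:c1regu}. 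Therefore $x_-=0$, so $u$ agrees with the graph part of $\gamma_0$ on all of $(0,x^*)$, and (1) follows at once. The tangential linear detachment you worry about cannot occur, and the delicate length comparison you anticipate is unnecessary.
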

\begin{proof}
 We start by showing that if $(1)$ is not true then there has to be a point $p \in (0,t )$ such that $\gamma_{0,1}(p) > 0$ and $u(\gamma_{0,1} (p) ) > \gamma_{0,2}(p)$. 

Indeed, assume that $(1)$ is not true and $u( \gamma_{0,1}(p) ) = \gamma_{0,2}(p)$ for all $p \in (0,t)$ such that $\gamma_{0,1}(p) > 0 $. As shown in Proposition \ref{prop:c1regu}, we find that for each $s \in (\beta_1 , t] $  it holds that
\begin{equation}\label{eq:4.16}
 \gamma_{0,1}(s) > 0 ,  \; \;  \gamma_{0,1}'(s) >0 , \; \;    u ( \gamma_1 ( \gamma_1^{-1} \circ \gamma_{0,1} (s) ) ) = \gamma_{0,2}(s),
 \end{equation}
 and thus $ \gamma_2 (  \gamma_1^{-1} \circ \gamma_{0,1} (s) ) = \gamma_{0,2}(s) $. 
 Now, (1) is not true if and only if $\gamma_1^{-1} \circ \gamma_{0,1} $ is not a weak reparametrization, i.e. it does not have sufficient regularity, but this is assured by Proposition \ref{prop:p14}, at least if we restrict to $(\beta_1 + \delta , \gamma_1^{-1} ( \gamma_{0,1}(t) ) )  $.

Similarly, the fact that $(2)$ fails to hold true implies that we can find $q \in (t, 1) $ such that $\gamma_{0,1} (q) < 1 $ and  $u( \gamma_{0,1}(q) ) > \gamma_{0,2}(q) $. 

Now we assume that $(1)$ and $(2)$ do not hold true. We have to show that $(3)$ does.
 
 Since for each $t$ on the contact set $\gamma_{0,1}'(t) \neq 0 $ and $\gamma_{0,1} \in C^1(0,1)$ , there is an open neighborhood of the contact set such that $\gamma_{0,1}' \neq 0 $ on this neighborhood. Taking a connected component of this neighborhood, we can infer the existence of $x_1' , x_2' , t_1, t_2 $ that are not points of contact and such that $\gamma_{0,1} : (t_1, t_2) \rightarrow (x_1',x_2')$ is a diffeomorphism. 

Since $u \in C^1(0,1)$ we obtain that $u \in W^{1, \infty} (x_1' , x_2') $. For the arguments to come, define $d_1 := u(x_1') $ and $d_2 = u(x_2')$. Further, define
\begin{equation*}
M_1 := \left\lbrace  v \in W^{1, \infty}(x_1', x_2') , v(x_1') = d_1 , v(x_2') = d_2 , v(x) \geq \gamma_{0,2} \circ \gamma_{0,1}^{-1} (x), \; x \in (x_1',x_2') \right\rbrace.
\end{equation*} 
We claim that then
\begin{equation*}
\int_{x_1'}^{x_2'} \sqrt{1+ u'^2} dx = \inf_{ v \in M_1} \int_{x_1'}^{x_2'} \sqrt{1+ v'^2} dx 
\end{equation*}
from which it follows according to Lemma \ref{lem:rgulang} that $u\in W^{2,2}_{loc}(x_1' , x_2') $. 
%
Let us prove this claim: Assume that there is $v \in W^{1, \infty} (x_1', x_2') $ such that $v(x_1') = d_1 $ and $v(x_2') = d_2$  and $v(x) \geq \gamma_{0,2} \circ \gamma_{0,1}^{-1} (x)$ such that 
\begin{equation*}
\int_{x_1'}^{x_2'} \sqrt{1+ v'^2} dx < \int_{x_1'}^{x_2'} \sqrt{1+ u'^2} dx. 
\end{equation*}
Then define 
\begin{equation*}
\widetilde{\gamma}(s) := \begin{cases}  \gamma(s) & 0 \leq s \leq t_1 \\ (\gamma_1(s), v( \gamma_1(s)) ) & t_1 \leq s \leq t_2 \\ \gamma(s) & t_2 \leq s \leq 1 
\end{cases}
\end{equation*}
and note that $\widetilde{\gamma} \in M_{\gamma_0} $ because of Proposition \ref{prop:p16}. Using \cite[Theorem 263 D]{Fremlin} the same way it has been used in the proof of Proposition \ref{prop:p16} we find
\begin{eqnarray*}
\mathcal{L}(\widetilde{\gamma}) & = & \int_0^{t_1} | \gamma'| ds + \int_{t_2}^1 |\gamma'| ds + \int_{x_1'}^{x_2'} \sqrt{1+ v'^2} dx 
\\ & <  & \int_0^{t_1} | \gamma'| ds + \int_{t_2}^1 |\gamma'| ds + \int_{x_1'}^{x_2'} \sqrt{1+ u'^2} dx  = \mathcal{L(\gamma)}.
\end{eqnarray*}
However this is a contradiction to the minimizer property of $\gamma$. This completes the proof of the intermediate claim. As already mentioned, the actual regularity follows from Lemma \ref{lem:rgulang}. 
\end{proof}

\begin{cor} \label{cor:regutoppi} (Regularity on the top)

Let $\gamma \in M_{\gamma_0} $ be a minimizer of $\mathcal{L}$ with graph reparametrization $u$. Then $u \in W^{2,2}_{loc}(0,1)$. 
\end{cor}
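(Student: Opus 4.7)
The plan is to exhibit, for each $x_0 \in (0,1)$, an open interval around $x_0$ on which $u \in W^{2,2}$; a finite-cover argument for any compact $K \subset (0,1)$ will then yield $u \in W^{2,2}(K)$ and hence the claim. Since $u \in C^{1}(0,1)$ by Proposition \ref{prop:c1regu}, overlapping $W^{2,2}$-pieces glue across continuous junctions without producing singular distributional contributions, so the pasting step is automatic.

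I would split $(0,1)$ into the open set $U := \{x \in (0,1) : (x,u(x)) \notin \gamma_0([0,1])\}$ and its complement, the contact set $C$. If $x_0 \in U$, Proposition \ref{prop:vartop} provides an open neighborhood of $x_0$ on which $u \in C^{\infty}$ with $u'' \equiv 0$, which is trivially $W^{2,2}$. If $x_0 \in C$, pick $t_0 \in (0,1)$ with $\gamma_0(t_0) = (x_0, u(x_0))$; Proposition \ref{prop:c1regu} gives $\gamma_{0,1}'(t_0) > 0$, so $\gamma_{0,1}$ is a local $C^{1}$-diffeomorphism at $t_0$, and applying Lemma \ref{lem:localregu} at $t_0$ leaves three possibilities. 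Case $(3)$ directly supplies a two-sided open neighborhood of $x_0$ with the desired regularity. In case $(1)$, the weak reparametrization asserted by the lemma forces $u$ to coincide with $\gamma_{0,2} \circ \gamma_{0,1}^{-1}$ on a left-open neighborhood of $x_0$; since the proof of Lemma \ref{lem:localregu} also establishes $\gamma_{0,1}' > 0$ on $(\beta_1, t_0]$, the pseudograph property of $\gamma_0$ from Definition \ref{prop:p10} places $\gamma_{0,2} \circ \gamma_{0,1}^{-1}$ in $W^{2,2}_{loc}$ on the corresponding $x$-interval, so $u$ is $W^{2,2}$ on a left-open neighborhood of $x_0$. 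Case $(2)$ is symmetric.

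To close the missing side in cases $(1)$ and $(2)$, I would observe that the opposite side of $x_0$ either immediately falls into $U$ (where $u$ is smooth and hence $W^{2,2}$) or contains further contact points, at each of which Lemma \ref{lem:localregu} is reapplied; continuity of $u'$ ensures that the resulting one-sided $W^{2,2}$-pieces glue cleanly into a two-sided $W^{2,2}$-neighborhood of $x_0$. By Heine--Borel any compact $K \subset (0,1)$ is then covered by finitely many such intervals, and the pasting remark at the beginning delivers $u \in W^{2,2}(K)$.

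The main obstacle I anticipate is precisely this bookkeeping: cases $(1)$ and $(2)$ of Lemma \ref{lem:localregu} deliver only one-sided information at a contact point, so combining the two sides into a genuine two-sided $W^{2,2}$-neighborhood requires a careful inspection of the structure of $C$ near $x_0$, relying on the local diffeomorphism property of $\gamma_{0,1}$, the continuity of $u'$, and the monotonicity of $u'$ secured by Corollary \ref{cor:conc}.
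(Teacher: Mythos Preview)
Your proposal is correct and follows essentially the same route as the paper: split into $U$ and the contact set, invoke Proposition~\ref{prop:vartop} on $U$, apply Lemma~\ref{lem:localregu} at contact points, and glue via $u\in C^1(0,1)$. The only minor difference is in closing the missing side in cases (1)/(2): rather than reapplying the lemma at nearby contact points, the paper observes directly that $\gamma_{0,1}'(t_0)>0$ makes $\gamma_{0,2}\circ\gamma_{0,1}^{-1}$ a $W^{2,2}_{loc}$ function on a \emph{two-sided} $x$-neighborhood of $x_0$, so if $u$ stays in contact just past $x_0$ it inherits this regularity, and if not it falls into $U$ and is smooth---either way the right side is covered without iteration.
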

\begin{proof}
Let $U$ be defined as in Proposition \ref{prop:vartop}. Fix $x \in (0,1)$. If $x \in U$, by virtue of the very same proposition there is an open neighborhood $V$ of $x$ such that $ u \in C^\infty (\overline{V}) \subset W^{2,2}(V)$. Assume that $x \in (0,1)$ is not contained in $U$. As a consequence, there is $t_0\in (0,1)$ such that $\gamma_{0,1}(t_0) = x $ and $u ( \gamma_{0,1}(t_0)) = \gamma_{0,2}(t_0)$ and one of the three possibilities in Lemma \ref{lem:localregu} apply. Assume that the case (1) in Lemma \ref{lem:localregu} applies. 
Then $u_{\mid_{(\delta,x)}}$ is $W^{2,2}_{loc}(\delta,x) $  for each $\delta > 0 $ as graph reparametrization of $(\gamma_{0})_{\mid_{(c,t_0)}}$, for some $c > 0 $. Note that even more holds true: There is $\eta > x $ such that $u_{\mid_{(\delta,\eta)}}$ is $W^{2,2}_{loc}(\delta,\eta) $, since $\gamma_{0,1}'(t_0) > 0 $ and so  the graph reparametrization of $\gamma_{0,1}$ goes a bit further than $x$. In terms of formulas,  for each $\delta' > \delta $ 
\begin{equation*}
\int_{\delta'}^{x} u''^2 dx = \int_{\delta'}^{x} ( \gamma_{0,2} \circ \gamma_{0,1}^{-1} )''^2 dx < \infty
\end{equation*}
because of the pseudograph property.

 So, for some $\delta' \in (\delta, x) $ we find that $ u \in W^{2,2} (\delta' ,x)$.  If now $u$ leaves the coincidence set immediately after $x$ then $(x,x+ \theta)$ is a subset of $U$ for some $\theta > 0 $ and therefore $u \in W^{2,2}(x, x+ \theta)$.
 Since $u \in C^1(0,1)$, the first derivatives match at $x$, and therefore $u$ can be glued to be $W^{2,2} (\delta', x+ \theta)$, which is an open neighborhood of $x$. In case that $x$ is not a boundary point of $(0,1) \setminus U$  we do not just have $u \in W^{2,2} (\delta , x) $ but $ u \in W^{2,2} ( \delta , x + \theta )$ for some $\theta > 0$ since $\gamma_{0,1}'(t_0) > 0 $ and so $u_{\mid_{(\delta, x+ \theta)} } $ is a graph reparametrization of $(\gamma_0)_{\mid_{(c,t_0+ d)}]} $ for some $c, d > 0 $. We are done with case (1). 

If case (2) in Lemma \ref{lem:localregu} applies, the open neigborhood of $V$ of  $x$ such that $u \in W^{2,2}(V)$ can be constructed in the same way. So the only remaining case is $(3)$. But in case (3) it is already part of the statement that there is an open neighborhood $V$ of $x$ such that $u \in W^{2,2}(V) $. So eventually for each $x\in (0,1)$ there exists a neighborhood $V$ of $x$ such that $u \in W^{2,2}(V)$ which results in $u \in W^{2,2}_{loc}(0,1)$. 
\end{proof}

\begin{lemma}(Pseudograph regularity as a curve)\label{lem:regucurve}

Let $\gamma \in M_{\gamma_0}$ be a minimizer of $\mathcal{L}$ in $M_{\gamma_0}$. Then (possibly after weak reparametrization) $\gamma \in P_\psi$ and so $\gamma \in B_{\gamma_0}$. Moreover, $\gamma(\beta_1), \gamma(\beta_2) \in \gamma_0([0,1])$, where $\beta_1, \beta_2$ are as in Definition \ref{defi:panettone}.
\end{lemma}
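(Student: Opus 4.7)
The plan is to split the proof into two parts: first, produce a weak reparametrization making $\gamma$ a genuine pseudograph in $P_\psi$ (hence in $B_{\gamma_0}$); second, argue by contradiction that the endpoints $\gamma(\beta_1), \gamma(\beta_2)$ of the two flat pieces must touch $\gamma_0([0,1])$.

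For the first part, I begin by observing that on the flat intervals $[0,\beta_1]$ and $[\beta_2,1]$, where $\gamma_1$ is constant, the length reduces to the total variation of $\gamma_2$. Minimality forces $\gamma_2$ to be monotone on each (replacing it by the monotone interpolation keeps the middle and $M_{\gamma_0}$-membership unchanged while strictly lowering $\mathcal{L}$ otherwise), and after a weak reparametrization each vertical piece is a straight segment at unit speed. On $(\beta_1,\beta_2)$ I apply Proposition \ref{prop:p18} to the graph reparametrization $u$, which is concave (Corollary \ref{cor:conc}), $C^1(0,1)$ (Proposition \ref{prop:c1regu}), and $W^{2,2}_{loc}(0,1)$ (Corollary \ref{cor:regutoppi}), obtaining an arclength (weak) reparametrization in $W^{2,1}\cap W^{2,2}_{loc}$. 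Gluing the three pieces yields a unit-speed candidate with $\gamma_1'\geq 0$ and middle graph reparametrization equal to $u$; the obstacle condition $\gamma_2 \geq \psi \circ \gamma_1$ follows on the middle from $u(x) \geq \gamma_{0,2}(\gamma_{0,1}^{-1}(x)) \geq \psi(x)$ (via $\gamma_0 \in P_\psi$ and continuity and surjectivity of $\gamma_{0,1}$ onto $[0,1]$), and on the vertical pieces from $\gamma_2 \geq 0 > \psi(0),\psi(1)$. The critical subtlety for $W^{2,1}_{loc}$ regularity is the tangent match at $\beta_1,\beta_2$: the vertical piece gives $(0,1)$, the arclength rep gives $(1, u'(0+))/\sqrt{1+u'(0+)^2}$, and a jump would put a Dirac mass in the distributional $\gamma''$. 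I rule this out by minimality: if $u(0+) > 0$ and $u'(0+) < +\infty$, replacing $u$ on $[0,\epsilon]$ by $\tilde u(x) = u(0+) + \alpha \sqrt{x}$ with $\alpha$ chosen so $\tilde u(\epsilon) = u(\epsilon)$ produces a competitor with vertical tangent at $0$; at least when $u'(0+) > 0$ this stays above $u$ (hence admissible), and has length on $[0,\epsilon]$ asymptotic to $u'(0+)\,\epsilon$, strictly below $\int_0^\epsilon \sqrt{1+u'^2}\,dx \sim \epsilon\sqrt{1+u'(0+)^2}$. The remaining sign cases of $u'(0+)$ are handled by analogous competitors exploiting concavity, and similarly at $\beta_2$; with corners ruled out, the glued reparametrization is a true pseudograph in $B_{\gamma_0}$.

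For the second part, assume $\gamma(\beta_1) = (0,u(0+)) \notin \gamma_0([0,1])$, so $d := \operatorname{dist}((0,u(0+)),\gamma_0([0,1])) > 0$. Construct a competitor by lowering $u(0+)$ by some $\eta \in (0,d/2)$: set $\tilde u = u - \eta$ on $[0, x_0/2]$, linearly interpolate on $[x_0/2, x_0]$ to $(x_0, u(x_0))$, leave $u$ unchanged on $[x_0, 1]$, and shorten the left vertical piece by $\eta$. Admissibility $\tilde u \geq \gamma_{0,2} \circ \gamma_{0,1}^{-1}$ holds for $x_0$ small, because $u - \gamma_{0,2}\circ\gamma_{0,1}^{-1}$ has right limit at $0$ equal to $u(0+) - h^* \geq d$, where $h^* := \sup\{\gamma_{0,2}(t) : \gamma_{0,1}(t) = 0\}$; the monotonicity of $\gamma_{0,2}$ on the vertical piece of $\gamma_0$ at $x=0$, forced by immersedness, identifies this limit cleanly. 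The length change is $-\eta$ on the vertical piece plus at most $\sqrt{(x_0/2)^2 + (u(x_0) - u(x_0/2) + \eta)^2} - \sqrt{(x_0/2)^2 + (u(x_0) - u(x_0/2))^2}$ on the middle by the triangle inequality, a difference strictly less than $\eta$ for $x_0 > 0$. The net strict decrease contradicts minimality, giving $\gamma(\beta_1) \in \gamma_0([0,1])$; the argument at $\beta_2$ is symmetric. The main obstacle I foresee is the corner-removal step in the first part, which requires a case analysis in the sign of $u'(0+)$ and careful use of the regularity of $\gamma_0$ near the boundary.
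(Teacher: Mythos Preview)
Your overall structure is reasonable, but the corner-removal step in Part 1 contains a genuine error, and this is exactly the step you flag as the main obstacle. Your competitor $\tilde u(x)=u(0+)+\alpha\sqrt{x}$ on $[0,\epsilon]$, with $\alpha$ chosen so $\tilde u(\epsilon)=u(\epsilon)$, is \emph{longer} than $u$, not shorter. A direct computation (substitute $x=\epsilon s$, then $s=t^2$) gives
\[
\int_0^\epsilon \sqrt{1+\tilde u'(x)^2}\,dx \;\sim\; \epsilon\cdot \tfrac12\int_0^2\sqrt{t^2+u'(0+)^2}\,dt
\]
as $\epsilon\to 0$, and one checks that $\tfrac12\int_0^2\sqrt{t^2+m^2}\,dt>\sqrt{1+m^2}$ for every $m>0$ (e.g.\ at $m=1$ the left side is $\approx 1.48$ versus $\sqrt 2\approx 1.41$). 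Geometrically this is clear: any curve from $(0,u(0+))$ to $(\epsilon,u(\epsilon))$ other than the chord is strictly longer, and the original $u$, being $C^1$ with finite slope, hugs the chord. Rounding a corner never shortens a curve; it is the obstacle constraint that forces verticality, and your argument does not use it.

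The paper's proof reverses your order, and this is what makes it work. It first shows $\gamma(\beta_1)\in\gamma_0([0,1])$, but by a simpler argument than your Part 2: if not, then by Proposition~\ref{prop:vartop} one has $u''=0$ on some $(0,\epsilon)$, so $u$ is a straight segment with positive distance to $\gamma_0([0,1])$; now tilt this segment to steepen its slope and shorten the vertical piece accordingly, producing a strictly shorter admissible competitor. Once contact is established, verticality is forced by the obstacle: the contact point $(0,u(0+))=\gamma_0(t_0)$ has $\gamma_{0,1}(t_0)=0$, so by monotonicity $\gamma_{0,1}\equiv 0$ on $[0,t_0]$ and $\gamma_0$ has vertical tangent at $t_0$. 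If $u'(0+)<\infty$, then comparing the $C^1$ arclength parametrizations of $\gamma$ and $\gamma_0$ near this point shows $\gamma$ cannot stay above $\gamma_0$, contradicting $u\circ\gamma_{0,1}\ge\gamma_{0,2}$. So prove contact first, and let the obstacle do the work of eliminating the corner.
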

\begin{proof}
To show the pseudograph property of an element of $M_{\gamma_0} $  we have to show three things: The first one is  $W^{2,2}_{loc}-$regularity of local graph reparametrizations, which however follows right away from Corollary \ref{cor:regutoppi}. The second one is $W^{2,1}_{loc}((0,1);\mathbb{R}^2)$ regularity as a curve and the third one is immersedness. In case that $\beta_1 = 0 $ and $\beta_2 = 1$, there is nothing to show since $\gamma$ starts and ends as a $W^{2,2}_{loc}$-graph which is certainly $W^{2,1}_{loc}$ and immersed. Now suppose that for example $\beta_1 > 0 $, so $\gamma$ is vertical for a while. We have to make sure that $\gamma$ has no "corner" at $\beta_1$, i.e. $\gamma_{\mid_{(\beta_1,\beta_2)}} $ has a vertical tangent vector at $\beta_1$.

Indeed, provided that the graph part has a vertical tangent vector, we can already show $W^{2,1}$-regularity: According to Corollary \ref{cor:conc} and  Proposition \ref{prop:p18} the arclength parametrization of the graph part of $\gamma$ is $W^{2,1}$ and as $\gamma_{[0, \beta_1]}$ is a straight line, the arclength parametrization is certainly $W^{2,1}$.  Since reparametrization does not change the direction of the tangent lines, the tangent vectors of the arclength reparametrizations equal. Glueing componentwise, we obtain that $\gamma$ is $W^{2,1}$ in a neighborhood of $\beta_1$. 
The arguments can be repeated for $\beta_2$ and  the arclength parametrization can be streched to a constant-velocity parametrization without losing regularity. So, the claim really just boils down to showing that the tangent vector is vertical, which we shall do from now on.

First note that $\gamma(\beta_1) \in \gamma_0([0,1]) $ since otherwise there is $\epsilon >0 $ such that $u'' = 0 $ on $(0,\epsilon)$, so $u$ is a line on $(0,\epsilon)$ that does not touch $\gamma_0([0,1])$, so it has positive distance from this curve. Increasing the slope of the line a little bit and shortening the vertical part accordingly would lead to a strictly shorter curve, which is a contradiction to the minimizer property of $\gamma$. Therefore $\gamma(\beta_1) \in \mathrm{tr}(\gamma_0)$. So, there is $t_0 \in (0,1)$ such $\gamma_0(t_0) = \gamma(\beta_1)$. The tangent vector  $\gamma_0'(t_0)$ is vertical since $\gamma_{0,1}'$ is continuous and  $(\gamma_{0,1}')_{\mid_{(0, t_0)}} \equiv 0$. Suppose that $\gamma$ has a non-vertical tangent vector at $\beta_1$. Since $\gamma $ is a graph for $t \in ( \beta_1, \beta_2)$, its arclength parametrization lies in $W^{2,1}$ and since $\gamma_0$ is immersed, so does the arclength parametrization of $\gamma_0$. Therefore, both arclength parametrizations are $C^1$ where $\gamma$ and $\gamma_0$ are graphs. If now the tangent vector of $\gamma$  at $\beta_1$ were not vertical, then $\gamma$ could not lie above $\gamma_0$ in a neighborhood of $\beta_1$. Hence  the tangent vector of $\gamma$ at $\beta_1$ is vertical. 
The same technique can be repeated for $\beta_2$ to obtain the claim finally.    
\end{proof}

\subsection{Existence of a Minimizer for the Trace-Length Problem}

\begin{definition}(Decomposability, \cite[p.1]{Ferriero}) 
Let $N \geq 1$ and $E\subset \mathbb{R}^N$ Lebesgue measurable. We say that $E$ is \emph{decomposable} if there exist measurable sets $A,B \subset \mathbb{R}^N$ such that $E$ is the disjoint union of $A$ and $B$  and $\mathcal{P}(E, \mathbb{R}^N ) = \mathcal{P}(A, \mathbb{R}^N) + \mathcal{P}(B , \mathbb{R}^N) $. 
If $E$ is not decomposable, we call $E$ \emph{indecomposable}.  
\end{definition}

\begin{lemma} (Existence of a minimizer in the class of $\cap$-shaped curves) \label{lem:exmin}

There exists $\gamma \in M_{\gamma_0}$ such that 
\begin{equation*}
\mathcal{L}(\gamma) = \inf_{\iota \in M_{\gamma_0}} \mathcal{L}(\iota) 
\end{equation*}
Additionally, $\gamma \in P_\psi$, $\gamma(\beta_1), \gamma(\beta_2) \in \gamma_0([0,1])$, and $\gamma$ is concave on the top. Additionally, if $u$ denotes the graph reparametrization of $\gamma$, then $u'' = 0 $ a.e. on $\{ x \in (0,1) | u(x) \not \in \gamma_0([0,1])\}$. 
\end{lemma}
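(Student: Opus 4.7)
The plan is to recast the trace-length problem as a perimeter minimization for subsets of $\mathbb{R}^2$, exploit $BV$-compactness to extract a limit, and then invoke the convex-hull result from \cite{Ferriero} to turn the limit into a $\cap$-shaped curve. To any $\gamma \in M_{\gamma_0}$ with graph reparametrization $u$ and $\beta_1, \beta_2$ as in Definition \ref{defi:panettone}, I would associate the set
\[
E_\gamma := \{(x,y) \in (0,1)\times \mathbb{R} : 0 < y < u(x)\}.
\]
Its reduced boundary splits into the bottom segment $[0,1]\times\{0\}$ of length one and the trace of $\gamma$ itself, so $\mathcal{P}(E_\gamma, \mathbb{R}^2) = \mathcal{L}(\gamma) + 1$. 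Analogously attach to the fixed pseudograph $\gamma_0$ the set $F_0$ enclosed between the $x$-axis and $\gamma_0$; then the requirement $u\circ\gamma_{0,1}\geq \gamma_{0,2}$ becomes, up to $\mathcal{H}^2$-null sets, the containment $F_0\subset E_\gamma$.

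First I would produce an a priori bound. Since $\gamma_0 \in M_{\gamma_0}$ itself, any minimizing sequence $(\gamma_n)$ eventually satisfies $\mathcal{L}(\gamma_n)\leq \mathcal{L}(\gamma_0)+1=:M$. Because the two vertical side-segments of a $\cap$-shape already contribute $u_n(0)+u_n(1)$, and the graph part alone has length at least $\max_x u_n(x) - \min\{u_n(0),u_n(1)\}$, one obtains $\|u_n\|_\infty \leq M$ uniformly, hence $E_n := E_{\gamma_n} \subset [0,1]\times[0,M]$ with $\sup_n \mathcal{P}(E_n) < \infty$. The $BV$-compactness theorem then gives, along a subsequence, $\chi_{E_n}\to \chi_{E^*}$ in $L^1(\mathbb{R}^2)$ together with $\mathcal{P}(E^*)\leq \liminf_n \mathcal{P}(E_n)$, and $F_0\subset E^*$ is preserved in the limit.

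The main obstacle is that $E^*$ need not be of the form $E_{\gamma}$ for a $\cap$-shaped curve: its upper boundary could oscillate or have multiple connected components. This is where I would use \cite{Ferriero}. By the decomposition theorem one may first restrict to the indecomposable component of $E^*$ containing $F_0$ (or the union of those containing pieces of $F_0$), losing no perimeter; then one replaces it by the smallest set of the form $E_{\tilde u}$ with $\tilde u$ \emph{concave} and $\tilde u \geq \gamma_{0,2}\circ \gamma_{0,1}^{-1}$ on $(\gamma_{0,1}(\beta_1),\gamma_{0,1}(\beta_2))$ obtained as the concave envelope of the upper profile of $E^*$. The convex-hull perimeter inequality from \cite{Ferriero} applied to the upper portion of the set guarantees that this envelope operation does not increase $\mathcal{P}$, while it trivially preserves $F_0 \subset \cdot$ since $F_0$ already lies below every admissible upper profile. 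The resulting $\tilde u$ together with the vertical segments from $(0,0)$ to $(0,\tilde u(0))$ and from $(1,\tilde u(1))$ to $(1,0)$ assembles, after an arclength (weak) reparametrization provided by Proposition \ref{prop:p18}, into a curve $\gamma \in M_{\gamma_0}$ with $\mathcal{L}(\gamma) = \inf_{M_{\gamma_0}}\mathcal{L}$.

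Once existence is in hand, the remaining assertions are consequences of the regularity work already carried out: concavity on the top is Corollary \ref{cor:conc}; the pseudograph property together with $\gamma\in P_\psi$ and $\gamma(\beta_1),\gamma(\beta_2)\in \gamma_0([0,1])$ is precisely Lemma \ref{lem:regucurve}; and the vanishing $u''=0$ a.e. on $\{x\in (0,1):u(x)\notin \gamma_0([0,1])\}$ is the second statement of Proposition \ref{prop:vartop} (noting that $\{x:(x,u(x))\notin \gamma_0([0,1])\}$ and $\{x:u(x)\notin \gamma_0([0,1])\}$ coincide up to the contact set, where $u''=0$ holds trivially on any open graph-gap). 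The delicate step is really the perimeter-to-curve conversion in paragraph three; everything else is compactness plus previously established regularity.
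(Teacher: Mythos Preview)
Your approach shares the key ingredient---Ferriero's convex-hull perimeter inequality---with the paper, but the overall route is genuinely different. The paper does \emph{not} take a minimizing sequence in $M_{\gamma_0}$. Instead it builds the candidate minimizer directly from $\gamma_0$: it thickens the region under $\gamma_0$ by $\epsilon>0$ to obtain a closed set $E_\epsilon$ with $(0,1)\times(0,\epsilon)\subset E_\epsilon^1$, takes the convex hull $\mathrm{co}(E_\epsilon^1)$, reads off its concave upper profile $v_\epsilon\in W^{1,1}(0,1)$, and then lets $\epsilon\to 0$ via BV-compactness of the one-variable functions $v_\epsilon$. Optimality is obtained from the \emph{variational} form of Ferriero's result: $\mathrm{co}(E_\epsilon^1)$ minimizes perimeter among all bounded indecomposable supersets of $E_\epsilon$, and every competitor $\iota\in M_{\gamma_0}$, once lifted by $\epsilon$ and closed up along the bottom, encloses such a superset. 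This buys the paper an explicit, manifestly $\cap$-shaped candidate at every stage, so no ``perimeter-to-curve conversion'' is needed.

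Your direct-methods version is plausible but has real gaps at exactly the step you flag as delicate. First, $\gamma_0$ is only a pseudograph, not $\cap$-shaped, so $\gamma_0\notin M_{\gamma_0}$ in general and your a~priori bound needs another source (minor). Second, the identity $\mathcal{P}(E_\gamma)=\mathcal{L}(\gamma)+1$ presupposes $u\ge 0$; the paper first reduces to the case $\gamma_{0,2}\ge 0$ via $\gamma_0^+$ and only afterwards treats the general case. Third, and most seriously, after $L^1$-compactness you have no control over the shape of $E^*$: you do not know it projects onto all of $[0,1]$ nor that its indecomposable component containing $F_0$ has a single-valued upper boundary on $(0,1)$. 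Ferriero gives you $\mathcal{P}(\mathrm{co}(E^*))\le\mathcal{P}(E^*)$ for indecomposable $E^*$, but extracting a concave graph over all of $(0,1)$ from $\partial\,\mathrm{co}(E^*)$ requires precisely the kind of structural information that the paper's $\epsilon$-thickening supplies for free (it forces the bottom, the two vertical sides, and hence a concave top spanning $[0,1]$). Without an analogous device your ``concave envelope of the upper profile'' is not yet justified. The closing appeals to Corollary~\ref{cor:conc}, Lemma~\ref{lem:regucurve}, and Proposition~\ref{prop:vartop} are correct once existence is in hand.
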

\begin{proof} First assume that $\gamma_{0,2} \geq 0.$ 
Fix $\epsilon \in( 0, 1) $. Define
\begin{equation*}
S_x := \sup( \{ y  \in \mathbb{R} | (x,y) \in \gamma_0([0,1]) \}), 
\end{equation*}
\begin{equation*}
E_\epsilon := \bigcup_{ x \in [0,1] } \{x\} \times \left[0 , S_x + \epsilon \right]   \subset \mathbb{R}^2.
\end{equation*}
 Note that the supremum in the definition is actually a maximum due to compactness of $\gamma_0([0,1])$. We claim that
$E_\epsilon$ is Lebesgue measurable in $\mathbb{R}^2$ as a closed subset of $\mathbb{R}^2$. Indeed, if $((x_n,y_n)^T)_{n \in \mathbb{N}}$ is a sequence in $E_\epsilon$ converging to $(x,y)^T \in \mathbb{R}^2$, there is a sequence $(z_n)_{n \in \mathbb{N}}$ such that $ y_n \leq z_n $ and $ (x_n, z_n - \epsilon)^T \in \gamma_0([0,1])$. Compactness of $\gamma_0([0,1])$ implies that there is $z \geq y $ such that $(x,z) \in \gamma_0([0,1])$. The claim follows. From Theorem \cite[Theorem 1]{Ferriero} it follows that
\begin{equation}\label{eq:perimin}
\mathcal{P}(\mathrm{co}(E_\epsilon^1), \mathbb{R}^2) = \inf \{ \mathcal{P}(F, \mathbb{R}^2) : F \supset E_\epsilon \; (\textrm{mod } \lambda_{\mathbb{R}^2}) ,\; F \; \textrm{indecomposable and bounded} \} ,
\end{equation} 
where $\mathcal{P}(\cdot, \mathbb{R}^2)$ denotes the perimeter of a measurable set, $\lambda_{\mathbb{R}^2}$ denotes the $2$-dimensional Lebesgue measure, 
\begin{equation*}
E_\epsilon^1 := \left\lbrace x \in E_\epsilon \; \bigg \vert \; \liminf_{ r\rightarrow 0 } \frac{|E_\epsilon \cap B_r(x)|}{|B_r(x)|} = 1 \right\rbrace ,
\end{equation*}
and $\mathrm{co}(E_\epsilon^1)$ denotes the convex hull of $E_\epsilon^1$. Since $\mathrm{co}(E_\epsilon^1)$ is convex, it has Lipschitz boundary. Therefore, \cite[Section 5.8, Theorem 1]{Evansgariepy} and \cite[Problem 1.5.1]{Toponogov} yield that 
\begin{equation*}
\mathcal{P}(\mathrm{co}(E_\epsilon^1), \mathbb{R}^2) = \mathcal{H}^1(\partial(\mathrm{co}(E_\epsilon^1))) = \mathcal{L}(\widetilde{\gamma})
\end{equation*}
for some convex rectifiable curve $\widetilde{\gamma}$ whose image is $ \partial\mathrm{co}(E_\epsilon^1)$ and $[0,1] \ni s \mapsto \widetilde{\gamma}(s)$ is injective and continuous. Note that $(0,1) \times (0, \epsilon) \subset \overset{\circ}{E_\epsilon} \subset  (E_\epsilon^1)$.
We claim that $\widetilde{\gamma}$ can be chosen such that  $\widetilde{\gamma} = \widetilde{\gamma}^1 \oplus \widetilde{\gamma}^2 \oplus \widetilde{\gamma}^3 \oplus \widetilde{\gamma}^4  $ where $\oplus$ denotes the concatenation of four continuous curves and $\widetilde{\gamma}^1,\widetilde{\gamma}^3$ are vertical lines at $x= 0$ and $x= 1$ respectively, $\widetilde{\gamma}^4$ is a horizontal line at $y = 0$ and $ \widetilde{\gamma}_1^2 \in (0,1) ,\widetilde{\gamma}_2^2 \geq\epsilon$,  and $\widetilde{\gamma}_1^2$ is increasing. Everything except for the monotonicity of $\widetilde{\gamma}_1^2, $ follows from the fact that $(0,1) \times (0, \epsilon) \subset \mathrm{co}(E_{\epsilon}^1)$ and from the fact that $E_\epsilon^1$ is contained in the upper half plane. 
 To show the monotonicity of $\widetilde{\gamma}_1^2$, we show first injectivity, i.e. $\widetilde{\gamma}_1^2(t) = \widetilde{\gamma}_1^2(u)$ implies $t = u$. For this we show that $\widetilde{\gamma}_1^2(t) = \widetilde{\gamma}_1^2(u)$ implies that $\widetilde{\gamma}^2_2(t) = \widetilde{\gamma}^2_2(u)$. Indeed, if $\widetilde{\gamma}^2_2(t) > \widetilde{\gamma}^2_2(u)$, then by convexity of $\mathrm{co}(E_\epsilon^1)$, the triangle  $T$ spanned by $( \frac{1}{2} \widetilde{\gamma}_1^2 , \frac{\epsilon}{2} )^T , \widetilde{\gamma}^2(t) , ( \frac{1}{2}(1 + \widetilde{\gamma}_1^2(t)) , \frac{\epsilon}{2})^T $  is contained in $\mathrm{co}(E_\epsilon^1)$. However, $\widetilde{\gamma}_2^2(u)$ is an interior point of this triangle and hence an interior point of $\mathrm{co} (E_\epsilon^1)$. A contradiction to the fact that $\widetilde{\gamma}^2_2(u) \in \partial \mathrm{co}( E_\epsilon^1)$.  Exchanging roles of $t$ and $u$ one proves that $\widetilde{\gamma}_2^2(t)  = \widetilde{\gamma}_2^2(u)$ and therefore $t=u$. Since $t \mapsto \widetilde{\gamma}_1^2(t)$ is now continuous and injective,  it has to be strictly monotone. Since the direction of parametrization of $\widetilde{\gamma}$ is our choice we can obtain continuous reparametrization of $\widetilde{\gamma}$ with increasing first component in the end. 
Now define $v_\epsilon: (0,1) \rightarrow \mathbb{R}$ by 
\begin{equation*}
v_\epsilon (x) := \sup \{ y \geq 0 | (x,y) \in \overline{\mathrm{co}(E_\epsilon^1)} \}.  
\end{equation*}
We claim that $v_\epsilon \in W^{1,1}(0,1)$. It is easy to show that $v_\epsilon$ is concave and therefore lies in $W^{1,\infty}_{loc}(0,1)$. We show further that $v \in BV(0,1)$. Note first that $v_\epsilon$ is bounded since $\mathrm{co}(E_\epsilon^1)$ is bounded, and thus $v_\epsilon \in L^1(0,1)$. For the rest, we employ \cite[Theorem 1, Section 5.10]{Evansgariepy} and show that $\mathrm{ess}V_0^1 (v_\epsilon)$ is finite. For this let $\mathcal{T}$ be the set of all finite partitions consisting of points of approximate continuity of $v_\epsilon$in $(0,1)$, see \cite[Section 1.7.2]{Evansgariepy}. Then 
\begin{align*}
\mathrm{ess}V_0^1(v_\epsilon) & = \sup_{ (t_i)_{i= 1}^N \in \mathcal{T}, t_1 < t_2 < ... < t_N } \sum_{k = 1}^N |v_\epsilon(t_k)- v_\epsilon(t_{k-1})|  \\ & \leq  \sup_{ (t_i)_{i= 1}^N \in \mathcal{T},t_1 <  t_2 < t_3 < ... < t_N } \sum_{k = 1}^N \sqrt{|t_k - t_{k-1}|^2 + |v_\epsilon(t_k)- v_\epsilon(t_{k-1})|^2} \leq \mathcal{L}(\widetilde{\gamma}^2) ,
\end{align*}
since $(t_k,v_\epsilon(t_k) )_{k =1,...,N}= ( \widetilde{\gamma}^2(u_k) )_{k =1,...,N}$ for some $(u_k)_{k=1,...,N}$ such that $u_1 < u_2 < ... < u_N$, as $u \mapsto \widetilde{\gamma}_1^2(u)$ is increasing. Therefore, by Lemma \ref{lem:l1},  $v_\epsilon \in W^{1,1}(0,1)$. Since $W^{1,1}(0,1) \subset C^0([0,1])$ and $\widetilde{\gamma}([0,1])$ is closed, $v_\epsilon(0), v_\epsilon(1) \in \widetilde{\gamma}([0,1]) $. Using this we find that 
\begin{equation*}
1 + v_\epsilon(0) + v_\epsilon(1) + \int_0^1 \sqrt{1+ v_\epsilon'^2}\; dx \leq \mathcal{L}(\widetilde{\gamma}) = \mathcal{P}(\mathrm{co}(E_\epsilon^1), \mathbb{R}^2) 
\end{equation*}
In particular, $v_\epsilon$ is bounded in $W^{1,1}(0,1)$ since $E_\epsilon \subset [0,1] \times [0, ||\gamma_{0,2}||_\infty + 1]$ for all $\epsilon \in (0,1)$ and therefore, using \eqref{eq:perimin} and \cite[Section 4, Proposition 2]{Ambrosio} one has 
\begin{equation*}
\mathcal{P}(\mathrm{co}(E_\epsilon^1), \mathbb{R}^2) \leq \mathcal{P}((0,1) \times (0, ||\gamma_{0,2}||_\infty + 1), \mathbb{R}^2) \leq 4 + 2 ||\gamma_{0,2}||_\infty. 
\end{equation*}
Due to \cite[Section 5.2.3, Theorem 4]{Evansgariepy} there is a subsequence $\epsilon_n \rightarrow 0 $ and $\widetilde{v} \in BV(0,1)$ such that $v_{\epsilon_n} \rightarrow \widetilde{v}$ in $L^1(0,1)$. Possibly extracting a subsequence, one can also assume that $v_{\epsilon_n} \rightarrow \widetilde{v}$ pointwise almost everywhere. Now define for $x \in (0,1)$, $v(x) := \liminf_{n \rightarrow \infty} v_{\epsilon_n} (x)$. Note that $v = \widetilde{v}$ a.e. and hence $v \in BV(0,1)$. Superadditivity of the Limes Inferior implies that $v$ is concave. Hence, by Lemma \ref{lem:l1}, $v \in W^{1,1}(0,1)$. Define $T := v(0) + v(1) + 1$ and  
\begin{equation*}
\gamma(t) := \begin{cases}
(0, Tt)^T & t \in [0, \frac{v(0)}{T }] \\ ( \; T t- v(0)\; , \; v( T t- v(0)) \;  )^T & t \in (\frac{v(0)}{T}, \frac{v(0) + 1}{T } )  \\ ( 1  , \; v(1) - (Tt- v(0) - 1) \;  )^T & t \in [\frac{v(0) + 1}{T }, 1]
\end{cases} .
\end{equation*}
It is easy to check that $\gamma \in M_{\gamma_0}$ and because of \cite[Theorem 14.2]{Giusti} and superaddititvity of the Limes Inferior
\begin{align}\label{eq:prelang}
1 + \mathcal{L}(\gamma) & = 1+ v(0) + v(1) + \int_0^1 \sqrt{1+ v'^2} dx \leq  \liminf_{n \rightarrow \infty} \left(  1+ v_{\epsilon_n}(0) + v_{\epsilon_n}(1) + \int_0^1 \sqrt{1+ v_{\epsilon_n}'^2} dx \right) \nonumber \\ & \leq \mathcal{P}(\mathrm{co}(E_{\epsilon_n}^1) , \mathbb{R}^2). 
\end{align} 
Now let $\iota \in M_{\gamma_0}$. Then for each $\epsilon > 0 $, let $\widetilde{\iota} $ be the curve that arises from $\iota + \epsilon (0,1)^T$ by gluing with vertical lines that connect $(0,0)^T$ and $(0, \epsilon)^T$ as well as $(1,0)^T$ and $(1, \epsilon)^T$ and a horizontal line that connects $(0,0)^T$ and $(1,0)^T$. By the Jordan curve theorem and \cite[Section 4, Proposition 2]{Ambrosio}, $\widetilde{\iota}$ parametrizes the boundary of an indecomposable set which we will call $F_\epsilon$. Observe that $F_\epsilon \supset E_\epsilon$, as an easy computation shows. Therefore 
\begin{equation*}
\mathcal{L}(\iota) +2 \epsilon + 1 = \mathcal{L}(\widetilde{\iota}) = \mathcal{P}(F_\epsilon, \mathbb{R}^2)  \geq \mathcal{P}(\mathrm{co}(E_{\epsilon}^1) , \mathbb{R}^2) 
\end{equation*}
and using \eqref{eq:prelang} and the last equation with $\epsilon = \epsilon_n$ we obtain
\begin{equation*}
\mathcal{L}(\iota) +2 \epsilon_n + 1 \geq 1 + \mathcal{L}(\gamma).
\end{equation*}
Letting $n \rightarrow \infty $ we find 
\begin{equation*}
\mathcal{L}(\iota) \geq \mathcal{L}(\gamma) .
\end{equation*}
Since $\iota \in M_{\gamma_0}$ was arbitrary, we obtain the claim provided that $\gamma_{0,2} \geq 0 $. For the other direction we first introduce the following notation: For $\iota \in W^{1,1}((0,1);\mathbb{R}^2)$ we define 
\begin{equation*}
\iota^+(t) := (\iota_1(t) , \max\{ 0, \iota_2(t) \} ) \quad (t \in (0,1))  .
\end{equation*}
Observe that $M_{\gamma_0^+} \subset M_{\gamma_0} $ and if $\iota \in M_{\gamma_0} $ then $\iota^+ \in M_{\gamma_0^+}$, with slight abuse of notation since $\gamma_0^+$ does not necessarily lie in $P_\psi$. Additionally, by \cite[Theorem 4 (iii), Section 4.2.2]{Evansgariepy} one has $\mathcal{L}(\iota^+) \leq \mathcal{L}(\iota)$. From the first part of this proof can be concluded (since $W^{2,1}$-regularity of $\gamma_0$ was not needed in this proof so far), that there is $\gamma\in M_{\gamma_0^+}  $ such that 
\begin{equation*}
\mathcal{L}(\gamma) = \inf_{\iota \in M_{\gamma_0^+} } \mathcal{L}(\iota) \leq \inf_{\iota \in M_{\gamma_0}} \mathcal{L}(\iota^+) \leq \inf_{\iota \in M_{\gamma_0}} \mathcal{L}(\iota).  
\end{equation*}
Since $M_{\gamma_0^+} \subset M_{\gamma_0}$, the existence claim follows. The rest of the claim follows from Lemma \ref{lem:regucurve} and Proposition \ref{prop:vartop}.
\end{proof}

\begin{proof}[Proof of Theorem \ref{thm:panettone}]
Let $\gamma_0 \in P_\psi$ be arbitrary. By virtue of Lemma \ref{lem:exmin}, there is $\gamma \in M_{\gamma_0} \cap P_\psi $ concave such that $\mathcal{L}(\gamma) = \inf_{\iota \in M_{\gamma_0}} \mathcal{L}(\iota) $. Let $u$ be its graph reparametrization. We know that $u$ is concave by Corollary \ref{cor:conc}. Now, the straight lines on the side do not contribute at all to the elastic  energy and so we have
\begin{equation*}
\mathcal{E}(\gamma ) = \int_\gamma \kappa^2 ds = \int_0^1  \frac{u''(x)^2}{(1+ u'(x)^2)^\frac{5}{2}} dx. 
\end{equation*}
Additionally, again by Lemma \ref{lem:exmin}, 
\begin{eqnarray*}
\mathcal{L}(\gamma)  & = & \gamma_2(\beta_1) + \gamma_2(\beta_2) + \int_0^1 \sqrt{1+ u'^2}dx   \\ & =& \gamma_{0,2}(l_1) + \gamma_{0,2}(l_2) + \int_0^1 \sqrt{1+ u'^2} dx,
\end{eqnarray*}
where $l_1 = \sup\{ t \in (0,1) |\gamma_{0,1}(t) = 0 \} $ and $l_2 = \inf \{ t \in (0,1) |\gamma_{0,1}(t) = 1 \}$. 
Now denote by $U:= \{x \in (0,1) | (x,u(x)) \not \in \gamma_0([0,1]) \}$ and $I := \{ t \in (0,1) |u \circ \gamma_{0,1} ( t) = \gamma_{0,2} (t) \}$. Note that
\begin{equation*}
u'(\gamma_{0,1}(t)) \gamma_{0,1}'(t) = \gamma_{0,2}'(t) \quad \forall t \in I.  
\end{equation*}
Therefore using \cite[Chapter 2, Lemma A.4]{Kinderlehrerstampacchia} we find that at almost every point of $I$ 
\begin{equation*}
u''(\gamma_{0,1}(t) ) ( \gamma_{0,1}'(t) )^2 + u'( \gamma_{0,1}(t) ) \gamma_{0,1}''(t) = \gamma_{0,2}''(t) .
\end{equation*}
The last identity implies that on $I$ almost everywhere it holds that 
\begin{eqnarray*}
u''(\gamma_{0,1}(t) )^2 & = & \frac{1}{\gamma_{0,1}'(t)^4 } \left( \gamma_{0,2}''(t) - u'(\gamma_{0,1}(t) ) \gamma_{0,1}''(t) \right)^2 \\ & =& 
\frac{1}{\gamma_{0,1}'(t)^4} \left( \gamma_{0,2}''(t) - \frac{\gamma_{0,2}'(t)}{\gamma_{0,1}'(t) } \gamma_{0,1}''(t) \right)^2 \\ & = & 
\frac{1}{\gamma_{0,1}'(t)^6} \left( \gamma_{0,2}''(t) \gamma_{0,1}'(t) - \gamma_{0,2}'(t) \gamma_{0,1}''(t) \right)^2 \\ & =& \frac{1}{\gamma_{0,1}'(t)^6}\left\langle \gamma_0''(t), N(t) \right\rangle^2 |\gamma_0'(t)|^2  .
\end{eqnarray*}
Now observe that by virtue of   the fact that  $u'' \equiv 0 $ on $U$ by Lemma \ref{lem:exmin} and the coarea-formula
\begin{eqnarray*}
\int_0^1 \frac{u''(x)^2}{(1+ u'(x)^2)^\frac{5}{2}} dx &= & \int_{(0,1) \setminus U} \frac{u''(x)^2}{(1+ u'(x)^2)^\frac{5}{2}} \\ &   \leq & \int_{ (0,1) \setminus U } \int_{\{ t \in I : \gamma_{0,1}(t) = x \}} \frac{u''(x)^2}{(1+ u'(x)^2)^\frac{5}{2}} d \mathcal{H}^0(t) dx \\ & =&  \int_I \frac{u''(\gamma_{0,1}(t))^2 }{(1+ u'( \gamma_{0,1}(t)))^\frac{5}{2}} |\gamma_{0,1}'(t)|  dt 
\\ & = & \int_I \frac{1}{\gamma_{0,1}'(t)^6} \left\langle \gamma_0''(t) , N(t) \right\rangle^2 |\gamma_0'(t)|^2  \frac{1}{\left(1 + \left(\frac{\gamma_{0,2}'(t)}{\gamma_{0,1}'(t)}\right)^2 \right)^\frac{5}{2}}|\gamma_{0,1}'(t)| dt \\ & = & \int_I  \frac{\left\langle \gamma_0'' , N \right\rangle^2 }{|\gamma_0'|^3} dt  \leq \int_0^1  \frac{\left\langle \gamma_0'' , N \right\rangle^2 }{|\gamma_0'|^3} dt =  \mathcal{E}(\gamma_0).
\end{eqnarray*}
Note that, since $\gamma_{0,1}$ is (only) locally Lipschitz, we apply here the coarea formula in \cite[Section 3.4.3]{Evansgariepy}  together with the monotone convergence theorem. Additionally, 
\begin{eqnarray}\label{eq:463}
\mathcal{L}(\gamma)  & =  &    \gamma_{0,2}(l_1) + \gamma_{0,2}(l_2) + \int_U \sqrt{1+ u'^2 }dx + \int_{(0,1)\setminus U} \sqrt{1+ u'^2 }dx.
 \end{eqnarray}
The last summand can be simplified similarly to what we just did with $\mathcal{E}$. 
\begin{eqnarray*}
\int_{(0,1) \setminus U } \sqrt{1+ u'(x)^2} dx & \leq & \int_{(0,1) \setminus U } \int_{ \{ t \in I |\gamma_{0,1}(t) =x \} } \sqrt{1+ u'(x)^2} d \mathcal{H}^0(t) dx \\ & =  &
\int_I \sqrt{1 + u'(\gamma_{0,1}(t))^2} |\gamma_{0,1}'(t)| dt = \int_I \sqrt{\gamma_{0,1}'(t)^2 + ( u'(\gamma_{0,1}(t)) \gamma_{0,1}'(t) )^2 }  dt \\ & =& \int_I \sqrt{\gamma_{0,1}'(t)^2 + \gamma_{0,2}'(t)^2 } dt  =  \int_I | \gamma_0'(t) |dt .
\end{eqnarray*}
Now note that $U$ is open and therefore consists of at most countably many connected components $U_i = (x_{2i-1},x_{2i})$ for $i = 1,2,...\;$. For each $j =1,2, ... $ we can find $t_j$ such that $x_j = \gamma_{0,1}(t_j)$. Note that on $(x_{2i-1}, x_{2i})$ it holds that 
\begin{equation*}
u'(x) = \frac{u(x_{2i})- u(x_{2i-1})}{x_{2i}- x_{2i-1}} = \frac{\gamma_{0,2}(t_{2i}) - \gamma_{0,2}(t_{2i-1})}{\gamma_{0,1}(t_{2i}) - \gamma_{0,1}(t_{2i-1})}
\end{equation*}
and therefore, due to the triangle inequality in $\mathbb{R}^2$, 
\begin{eqnarray}\label{eq:kuerzer}
\int_{U} \sqrt{1+ u'(x)^2} dx & =  & \sum_{i = 1}^\infty \int_{x_{2i-1}}^{x_{2i}} \sqrt{1 + \left(  \frac{\gamma_{0,2}(t_{2i}) - \gamma_{0,2}(t_{2i-1})}{\gamma_{0,1}(t_{2i}) - \gamma_{0,1}(t_{2i-1})} \right)^2} dx  \nonumber \\ & =& \sum_{i = 1}^\infty \sqrt{1 + \left(  \frac{\gamma_{0,2}(t_{2i}) - \gamma_{0,2}(t_{2i-1})}{\gamma_{0,1}(t_{2i}) - \gamma_{0,1}(t_{2i-1})} \right)^2}  ( \gamma_{0,1}(t_{2i}) - \gamma_{0,1}(t_{2i-1}) ) \nonumber \\ 
 &= & \sum_{i =1}^\infty || \gamma_0(t_{2i}) - \gamma_0(t_{2i-1}) ||_{\mathbb{R}^2}^2 \nonumber \\ &= & 
\sum_{i = 1}^\infty \left\Vert \int_{t_{2i-1}}^{t_{2i}} \gamma_0'(s) ds \right\Vert_{\mathbb{R}^2}    \leq  \sum_{i=1}^\infty \int_{t_{2i-1}}^{t_{2i}} | \gamma_0'(s) | ds . 
 \end{eqnarray}
 Now note that the intervals $(t_{2i-1}, t_{2i})_{i =1,2,...}$ are disjoint and $\bigcup_{i = 1}^\infty   (t_{2i-1}, t_{2i})  \subset (l_1,l_2) \setminus I$ since if there were a $t' \in I $ such that $t_{2i-1} < t' < t_{2i}$ then  $\gamma_{0,1}(t') \in (0,1) \setminus U$ and $x_{2i-1} \leq \gamma_{0,1}(t') \leq x_{2i}$  which leads to $\gamma_{0,1}(t') \in \{ x_{2i-1} , x_{2i} \} $ and therefore $\gamma_{0,1}$ is constant on either $[t_{2i-1}, t']$ or $[ t', t_{2i} ]$. However $t_{2i}$ and $t_{2i-1}$ are either $l_1, l_2 $ or points of contact. On points of contact, Proposition \ref{prop:c1regu} yields that $\gamma_{0,1}' > 0 $, which contradicts the assertion that $\gamma_{0,1}$ is constant on $[t_{2i-1}, t']$. The remaining possibility $t_{2i-1} = l_1 $ for some $i$  is also impossible since $\gamma_{0,1}(l_1) = 0 $ and $\gamma_{0,1}(t')  = 0 $ would contradict the choice of $l_1$. The same argument applies in the case $l_2 = t_{2i} $. Hence \eqref{eq:kuerzer} leads to the estimate 
 \begin{equation*}
 \int_U \sqrt{1+ u'^2} dx \leq \int_{(0,1) \setminus I} | \gamma_0'(s) |ds. 
 \end{equation*}
 Eventually, plugging the estimates into \eqref{eq:463} we end up with
 \begin{equation*}
 \mathcal{L}(\gamma) \leq \gamma_{0,1}(l_1) + \gamma_{0,2}(l_2) + \int_{l_1}^{l_2} |\gamma_0'| dt = \mathcal{L}(\gamma_0),
 \end{equation*}
 from which follows that $\mathcal{L}(\gamma) \leq \mathcal{L}(\gamma_0)$ and together with $\mathcal{E}(\gamma) \leq \mathcal{E}(\gamma_0)$ it also follows that $\mathcal{E}_\epsilon(\gamma) \leq \mathcal{E}_\epsilon(\gamma_0)$ for nonnegative $\epsilon$. 
\end{proof}

\section{Existence via Penalization}

Combining Proposition \ref{prop:p8} and Theorem \ref{thm:panettone}, we obtain that $\mathcal{E}_\epsilon$ admits a minimizer $\gamma_\epsilon$, which is a $\cap$-shaped pseudograph and concave on the top. Note that $\mathcal{E}_\epsilon(\gamma_\epsilon)< \infty$ means that the constant length reparametrization of $\gamma_\epsilon$ is an element of $W^{2,2}((0,1);\mathbb{R}^2)$. If we can bound the length of minimizers uniformly in $\epsilon$ as well as the energies, the $L^2$-norm of the first and second derivative of $(\gamma_\epsilon)$ will be bounded uniformly in $\epsilon$ (see Proposition \ref{prop:p7}). Using this, we can extract a subsequence that converges weakly in $W^{2,2}$ to a minimizer. 

A uniform bound of the energy is easy to derive using a certain monotonicity. A uniform bound for the length however, can only be expected in case that $\inf_{\gamma \in P_{\psi}} \mathcal{E}(\gamma) < c_0^2$, see Figure \ref{fig:3}. This is the reason for the case distinction in Theorem \ref{thm:main}.

\begin{lemma}\label{lem:lengthbd} (A length bound for one-sided graphs) 

Let $\gamma\in P_\psi$ be a $\cap$-shaped pseudograph that is concave on the top. Let $u$ be the graph associated to $\gamma$. If $\beta_1 = 0 $ then 
\begin{equation*}
\mathcal{L}(\gamma) \leq  u'(0) + \sqrt{1+ u'(0)^2}.
\end{equation*}
If $\beta_2 = 1 $ then 
\begin{equation*}
\mathcal{L}(\gamma) \leq |u'(1)| + \sqrt{1+ u'(1)^2}. 
\end{equation*}
\end{lemma}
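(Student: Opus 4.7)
The plan is to exploit the concavity of $u$ by comparing perimeters of two nested convex planar regions. I focus on the case $\beta_1 = 0$; the case $\beta_2 = 1$ is perfectly symmetric. Since $\beta_1 = 0$ the left vertical segment has zero length, so $u(0) = \gamma_2(0) = 0$ and
\[
\mathcal{L}(\gamma) = u(1) + \int_0^1 \sqrt{1+u'(x)^2}\, dx.
\]
I may assume $u'(0^+) < \infty$, since otherwise the inequality is vacuous.

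The key object is the subgraph
\[
R := \{ (x,y) : 0 \leq x \leq 1,\ 0 \leq y \leq u(x)\}.
\]
Concavity of $u$ combined with $u(0) = 0$ and $u(1) \geq 0$ forces $u \geq 0$ throughout $[0,1]$ and makes $R$ convex (it is the intersection of supporting half-planes determined by the concave graph with the horizontal strip $y \geq 0$, $0 \leq x \leq 1$). Its boundary decomposes into the bottom segment $[0,1]\times\{0\}$, the right vertical from $(1,0)$ to $(1,u(1))$, and the graph of $u$, so
\[
\mathcal{H}^1(\partial R) \;=\; 1 + u(1) + \int_0^1\sqrt{1+u'^2}\,dx \;=\; 1 + \mathcal{L}(\gamma).
\]

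Next I enclose $R$ in the right triangle $T$ with vertices $(0,0), (1,0), (1,u'(0))$. Concavity together with $u(0) = 0$ yields $u(x) \leq u'(0)\,x$ on $[0,1]$, hence $R \subset T$. Since both sets are planar convex sets (with Lipschitz boundary), the classical monotonicity of perimeter under inclusion of convex sets gives $\mathcal{H}^1(\partial R) \leq \mathcal{H}^1(\partial T)$. A direct computation of the triangle perimeter produces
\[
\mathcal{H}^1(\partial T) = 1 + u'(0) + \sqrt{1+u'(0)^2},
\]
and subtracting $1$ on both sides delivers $\mathcal{L}(\gamma) \leq u'(0) + \sqrt{1+u'(0)^2}$. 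For the case $\beta_2 = 1$ the analogous circumscribing triangle has vertices $(0,0), (1,0), (0,|u'(1)|)$, using the tangent line to $u$ at $x = 1$ (and the fact that $u'(1) \leq 0$ by concavity with $u(1) = 0$, $u(0) \geq 0$), and the same argument gives the bound $|u'(1)| + \sqrt{1+u'(1)^2}$.

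The only nontrivial ingredient is the monotonicity of perimeter for nested planar convex sets, which is a standard fact of convex geometry; here both $R$ and $T$ have Lipschitz boundaries, so the perimeter coincides with $\mathcal{H}^1$ of the topological boundary and no measure-theoretic technicalities intervene.
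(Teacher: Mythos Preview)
Your proof is correct and follows essentially the same route as the paper: define the convex subgraph region, enclose it in the right triangle with vertices $(0,0)$, $(1,0)$, $(1,u'(0))$ using the tangent-line bound $u(x)\le u'(0)x$, and compare perimeters. The only difference is that where you invoke the monotonicity of perimeter for nested planar convex sets as a standard fact, the paper spells it out by showing the nearest-point projection $P_E:\partial T\to\partial E$ is surjective and $1$-Lipschitz, then applying $\mathcal{H}^1(P_E(\partial T))\le\mathcal{H}^1(\partial T)$.
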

\begin{proof}
It suffices to show the first part since we can consider $\gamma(1- \cdot) $ otherwise. 
Consider the triangle $T$ with vertices $(0,0), (1,0) $ and $(1, u'(0))$. As a triangle, $T$ is convex and $T$ can be expressed by 
\begin{equation*}
T = \bigcup_{x \in [0,1]} \{x \} \times [0, u'(0) x ] . 
\end{equation*}

 Define 
\begin{equation*}
E := \bigcup_{x \in [0,1] } \{ x \} \times [0, u(x)]. 
\end{equation*}
Certainly, $E$ is convex since $u$ is concave. Note that $\gamma$ is a parametrization of $\partial E$ with the bottom removed. We claim that 
\begin{equation*}
 \mathcal{H}^1(\partial E) \leq \mathcal{H}^1(\partial T)
\end{equation*}
which results in 
\begin{equation*}
1 + \mathcal{L}(\gamma) \leq 1 + u'(0) + \sqrt{1+ u'(0)^2}, 
\end{equation*}
that implies the statement. 
We will prove that $E \subset T$ and $P_E : \partial T \rightarrow \partial E$ is surjective and contractive, where $P_E$ denotes the best approximation map. The relation $E \subset T$ follows from concavity, since
\begin{equation*}
u(x) = u(x) - u(0) \leq u'(0) (x- 0) \leq u'(0) x. 
\end{equation*} 
Observe that on the sides and on the bottom of $E$ it holds that $P_E = \mathrm{Id} $ and therefore the sides and bottom lie in $P_E(\partial T)$. Assume now that there is $z \in \partial E$ on the top such that $z \not \in P_E(\partial T)$. Now $u$ has a tangent line $L$ at $z$ and $E$, as a convex set, lies only on one side of $L$. We prolong the outer unit normal of $E$ at $z$ until we hit $\partial T$ to find that there is $p \in \partial T$ such that $ p -z  $ is perpendicular to $L$. Now however $P_E(p) \neq z$. Since $L$ seperates $p$ and $E$ we can find a convex combination of $p$ and $P_E(p)$, say $k = t p + (1-t)P_E(p) $ for some $t \in [0,1]$, that lies in $L$. Note that 
\begin{equation*}
||k - p|| = (1-t) || p- P_E(p) || < || p -z||. 
\end{equation*}
However, since $z \in L$  and $p -z \perp L $
 \begin{equation*}
 ||p -k||^2 = ||p-z||^2 + ||z-k||^2 \geq || p -z ||^2  ,
 \end{equation*}
 which is a contradiction. So, $P_E: \partial T \rightarrow \partial E $ is surjective and contractive by \cite[Proposition 5.3]{Brezis}. Therefore
 \begin{equation*}
 \mathcal{H}^1(\partial T ) \geq \mathcal{H}^1 ( P_E(\partial T ) ) = \mathcal{H}^1(\partial E) ,
 \end{equation*}
 which proves the claim. 
\end{proof}
\begin{lemma}\label{lem:lenghtbound2} (A length bound for functions that touch the obstacle)
 
Let $\gamma \in P_\psi$ be a $\cap$-shaped pseudograph that is concave on the top. If $\gamma$ touches the obstacle $\psi$ then
\begin{equation*}
\mathcal{L}(\gamma) \leq 2 \left( \sup_{x \in (0,1)} \psi(x) + || \psi'||_\infty \right)  + 1. 
\end{equation*} 
\end{lemma}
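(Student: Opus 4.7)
Let $u$ denote the (concave) graph reparametrization of the top of $\gamma$, so that
\begin{equation*}
\mathcal{L}(\gamma) = u(0) + u(1) + \int_0^1 \sqrt{1+u'(x)^2}\,dx,
\end{equation*}
where $u(0) = \gamma_2(\beta_1)$ and $u(1) = \gamma_2(\beta_2)$ are the lengths of the two vertical parts. The touching hypothesis gives some $x_0 \in (0,1)$ with $u(x_0) = \psi(x_0)$; note $x_0$ must be interior since $u(0), u(1) \geq 0 > \psi(0), \psi(1)$. Set $S := \sup_{(0,1)} \psi$ and $K := \|\psi'\|_\infty$. The plan is to show that $u$ admits a supporting affine function at $x_0$ whose slope is controlled by $K$, use this to confine the top of $\gamma$ in a rectangle, and then compare perimeters.

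The key step is the slope bound. Because $u-\psi$ attains a minimum at $x_0$ and $\psi$ has one-sided derivatives there while $u$ (being concave) has one-sided derivatives satisfying $u'(x_0+) \leq u'(x_0-)$, the standard comparison of difference quotients yields
\begin{equation*}
-K \leq \partial_+\psi(x_0) \leq u'(x_0+) \leq u'(x_0-) \leq \partial_-\psi(x_0) \leq K.
\end{equation*}
Hence any $m$ in the concave subdifferential of $u$ at $x_0$ lies in $[-K,K]$, and the affine function $L(x) := \psi(x_0) + m(x-x_0)$ satisfies $u \leq L$ on $[0,1]$. Since $x_0 \in (0,1)$ and $\psi(x_0) \leq S$, this forces $u(x) \leq S + K$ on $[0,1]$. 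On the other hand, concavity of $u$ together with $u(0), u(1) \geq 0$ forces $u \geq 0$ on $[0,1]$.

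With these inequalities in hand, the set
\begin{equation*}
E := \bigcup_{x \in [0,1]} \{x\} \times [0, u(x)]
\end{equation*}
is convex (it is the intersection of the strip $[0,1] \times \mathbb{R}$, the upper half plane, and the subgraph of the concave function $u$), and is contained in the rectangle $R := [0,1] \times [0, S+K]$. The perimeter of $E$ decomposes as
\begin{equation*}
\mathcal{H}^1(\partial E) = 1 + u(0) + u(1) + \int_0^1 \sqrt{1+u'(x)^2}\,dx = 1 + \mathcal{L}(\gamma).
\end{equation*}
Applying the nearest-point projection argument from Lemma \ref{lem:lengthbd} verbatim (with $R$ in place of $T$ and $E$ unchanged), the map $P_E : \partial R \to \partial E$ is surjective and $1$-Lipschitz, giving $\mathcal{H}^1(\partial E) \leq \mathcal{H}^1(\partial R) = 2 + 2(S+K)$. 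Subtracting $1$ yields the stated bound.

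The only genuine subtlety will be the slope estimate in the second paragraph: one must verify, using that $\psi$ has bounded one-sided derivatives and that $u$ (by Corollary \ref{cor:conc}) is concave hence equipped with one-sided derivatives everywhere, that every supporting slope at $x_0$ lies in $[-K,K]$. The remaining steps are a direct adaptation of the convex-hull/projection comparison already used in Lemma \ref{lem:lengthbd}.
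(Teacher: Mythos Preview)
Your proof is correct and follows essentially the same route as the paper: bound the height of $u$ via a supporting line at the touching point, enclose the subgraph in the rectangle $[0,1]\times[0,S+K]$, and invoke the nearest-point projection comparison from Lemma~\ref{lem:lengthbd}. The paper appeals to $u\in C^1(0,1)$ to write $u'(x_0)$ directly, whereas you work with one-sided derivatives and the superdifferential of the concave function $u$; this is a harmless and arguably cleaner variant of the same argument.
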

\begin{proof}
Let $u$ be the graph reparametrization of $\gamma$. 
We first show that $||u||_\infty \leq \sup_{x \in (0,1)} \psi(x) + ||\psi'||_\infty $. Note that if $u(x_0) = \psi(x_0) $, then $u \in C^1(0,1)$ implies that
\begin{equation*}
\partial_{+} \psi'(x_0) \leq u'(x_0) \leq \partial_{-} \psi(x_0) 
\end{equation*}
and therefore 
\begin{equation*}
|u'(x_0)| \leq \max ( |\partial_{+} \psi(x_0)|, | \partial_{-} \psi(x_0) | ) \leq ||\psi'||_\infty.
\end{equation*}
Now note that for each $x \in (0,1)$ 
\begin{equation*}
u(x) \leq u(x_0) + u'(x_0)(x-x_0) \leq \psi(x_0) + || \psi' ||_\infty |x- x_0|  ,
\end{equation*}
and therefore $|| u||_\infty \leq \sup_{x \in (0,1)} \psi(x) + || \psi' ||_\infty $. For the rest of the proof let $S := \sup_{x \in (0,1) } \psi(x) $ and define $R$ to be the rectangle with vertices $(0,0)^T, (0, S + ||\psi'||_\infty)^T , (1, S + ||\psi'||_\infty )^T$ and $(1,0)^T$. Adapting the techniques from Lemma \ref{lem:lengthbd} we find that 
\begin{equation*}
1 + \mathcal{L}(\gamma) \leq \mathcal{H}^1 ( \partial R ) = 2 (S + || \psi'||_\infty ) + 2 
\end{equation*}
This proves the claim. 
\end{proof}

\begin{lemma}\label{lem:bigenergy}
Let $c_0$, $G$ and $U_0, S$ be defined as in Theorem \ref{thm:main}. Additionally, let $\gamma= \gamma_1 \oplus \gamma_2 \oplus \gamma_3$ be as in \eqref{eq:vglkurve}. Then $\gamma \in P_\psi$, 
\begin{equation*}
\mathcal{L}(\gamma) = 2 S + \frac{1}{c_0} \int_\mathbb{R}\frac{1}{(1+t^2)^\frac{3}{4}} dt ,
\end{equation*}
and 
\begin{equation*}
\mathcal{E}(\gamma) = c_0^2 .
\end{equation*}
\end{lemma}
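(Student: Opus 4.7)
The plan is a direct computation that hinges on one clean identity: $U_0'(x) = G^{-1}\!\left(\tfrac{c_0}{2}-c_0 x\right)$. First I would verify admissibility. The pieces $\gamma_1, \gamma_3$ are vertical segments so their first components are constant; on $\gamma_2$ the first component is $t$, increasing with derivative $1$. Hence the concatenation has non-decreasing first component, i.e.\ $\gamma$ is a pseudograph (the $W^{2,2}_{loc}$-regularity of the top piece reduces to $U_0 \in C^\infty(0,1)$, which is immediate from the formula). Since $U_0(0)=U_0(1)=0$ (because $G^{-1}(\pm c_0/2)=\pm\infty$ and $(1+t^2)^{-1/4}\to 0$ as $|t|\to\infty$), the three pieces glue continuously. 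The obstacle condition is trivial on the vertical segments (where $\psi\circ\gamma_1 = \psi(0)<0$ or $\psi(1)<0$ while $\gamma_2\geq 0$), and on the top piece $\gamma_{2,2}(t)=S+U_0(t)\geq S\geq \psi(t)$. Thus $\gamma\in P_\psi$.

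Next, to evaluate the derivatives of $U_0$, set $y(x):=G^{-1}\!\left(\tfrac{c_0}{2}-c_0 x\right)$, so that $G(y(x))=\tfrac{c_0}{2}-c_0 x$. Differentiating and using $G'(y)=(1+y^2)^{-5/4}$ yields the ODE
\begin{equation*}
y'(x) = -c_0(1+y(x)^2)^{5/4}.
\end{equation*}
Differentiating $U_0(x) = \tfrac{2}{c_0}(1+y(x)^2)^{-1/4}$ and substituting the ODE then gives the key identity $U_0'(x)=y(x)$, and consequently $U_0''(x)=y'(x)=-c_0(1+U_0'(x)^2)^{5/4}$.

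For the length, $\mathcal L(\gamma_1)=\mathcal L(\gamma_3)=S$, and
\begin{equation*}
\mathcal L(\gamma_2)=\int_0^1\sqrt{1+U_0'(x)^2}\,dx = \int_0^1\sqrt{1+y(x)^2}\,dx.
\end{equation*}
The substitution $t=y(x)$, with $dx=-\tfrac{1}{c_0}(1+t^2)^{-5/4}dt$ and limits $x=0\mapsto t=+\infty$, $x=1\mapsto t=-\infty$, turns this into $\tfrac{1}{c_0}\int_{\mathbb R}(1+t^2)^{-3/4}dt$, giving the claimed length formula.

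Finally, the vertical segments have vanishing curvature and hence contribute zero to $\mathcal E$. On the top piece, plugging the identities for $U_0'$ and $U_0''$ into the graph energy integrand gives
\begin{equation*}
\frac{U_0''(x)^2}{(1+U_0'(x)^2)^{5/2}} = \frac{c_0^2(1+y^2)^{5/2}}{(1+y^2)^{5/2}} = c_0^2,
\end{equation*}
so $\mathcal E(\gamma)=\mathcal E(\gamma_2)=c_0^2$. No step is really hard; the only subtlety worth care is checking the boundary behavior of $U_0$ at $x=0,1$ so that $\gamma$ is a genuine closed pseudograph passing through $(0,S)$ and $(1,S)$.
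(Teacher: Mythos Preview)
Your argument is correct and follows essentially the same route as the paper: compute $U_0'=G^{-1}(\tfrac{c_0}{2}-c_0x)$ and $U_0''=-c_0(1+U_0'^2)^{5/4}$, read off the constant integrand $c_0^2$ for the energy, and evaluate the length by the substitution $t=U_0'(x)$. The one place where the paper is a bit more careful is membership in $P_\psi$: continuous gluing is not quite enough, since a pseudograph must lie in $W^{2,1}_{loc}$ as a curve, which requires matching \emph{tangent directions} at the two junctions; the paper handles this by passing to arclength on each piece (Proposition~\ref{prop:p18} for the concave graph part) and noting that $U_0'(0^+)=+\infty$, $U_0'(1^-)=-\infty$ force vertical tangents there, so the three $W^{2,1}$ pieces glue in $W^{2,1}$.
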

\begin{proof}
Note that 
\begin{eqnarray*}
U_0'(x) & = & \frac{1}{(  1 + G^{-1} ( \frac{c_0}{2} - c_0 x)^2 )^\frac{5}{4}}  G^{-1} \left( \frac{c_0}{2} - c_0 x \right) \frac{1}{G'\left( G^{-1} \left( \frac{c_0}{2} - c_0 x \right) \right) } \\ &= & G^{-1} \left(\frac{c_0}{2} - c_0 x \right) 
\end{eqnarray*}
and 
\begin{equation*}
U_0''(x)  = - c_0 \left( 1+ G^{-1} \left( \frac{c_0}{2} - c_0 x \right)^2 \right)^\frac{5}{4}.
\end{equation*}
Therefore, $U_0+ S$, the graph reparametrization of $\gamma$  lies certainly in $W^{2,2}_{loc}( (0,1);\mathbb{R}^2)$. For the regularity, i.e. $\gamma \in P_\psi $, we can proceed like in Lemma \ref{lem:regucurve}: Take arclength parametrizations of all three pieces, observe that all the pieces are $W^{2,1}$ using Proposition \ref{prop:p18}, and have matching tangent vector at the points where they meet. 
Since
\begin{equation*}
\frac{U_0''(x)^2}{(1 + U_0'(x)^2)^\frac{5}{2} } = c_0^2 \quad \forall x \in (0,1), 
\end{equation*}
 we obtain $\mathcal{E}(\gamma) = c_0^2$. Now,
\begin{align*}
\mathcal{L}(\gamma)  & =   2 S + \int_0^1 \sqrt{1 + U_0'(x)^2 } dx  = 2S+ \frac{1}{c_0}\int_{-\frac{c_0}{2}}^{\frac{c_0}{2}} \sqrt{1 + G^{-1} (s)^2 } ds \\ &  = 2 S  + \frac{1}{c_0} \int_{- \infty}^{\infty} \sqrt{1 + G^{-1} (G(t))^2} G'(t) dt  = 2 S + \frac{1}{c_0} \int_{\mathbb{R}} \frac{\sqrt{1+t^2}}{(1+ t^2)^\frac{5}{4}} dt .
\end{align*}
This implies the claim. 
\end{proof}

\begin{proof}[Proof of Theorem \ref{thm:main}]
Claim (1), existence in case that $\alpha = c_0^2$ and claim (3) follow directly from Lemma \ref{lem:bigenergy}.
It remains to consider the case $\alpha < c_0^2$. So, suppose that there is $\gamma \in P_\psi$ such that $\mathcal{E}(\gamma) < c_0^2 $. Choose $\delta:= \frac{c_0^2 - \alpha }{2 }$. There exists $\epsilon_0 > 0 $  such that $\mathcal{E}_\epsilon( \gamma) \leq c_0^2 - \delta $ for each $\epsilon<  \epsilon_0$. Choose $\gamma_\epsilon$ to be a  $\cap$-shaped minimizer of $\mathcal{E}_\epsilon$ that is concave on the top, which - recall - can be constructed using Proposition \ref{prop:p8} and Theorem \ref{thm:panettone}. Therefore 
\begin{equation*}
\mathcal{E}_\epsilon(\gamma_\epsilon) \leq \mathcal{E}_\epsilon(\gamma) \leq c_0^2 - \delta
\end{equation*}
for each $\epsilon < \epsilon_0$. We claim that for each such $\epsilon$ either $\beta_1(\gamma_\epsilon) = 0 $ and/or $\beta_2(\gamma_\epsilon) = 0 $, see \eqref{eq:panettone} fo the definition of $\beta_1, \beta_2$. If both are nonzero and $u_\epsilon$ denotes the graph reparametrization of $\gamma_\epsilon$, then $u_\epsilon'(0) = \infty $, $u_\epsilon'(1) = - \infty $ (by Lemma \ref{lem:regucurve}) and therefore by virtue of Proposition \ref{prop:p1} (or more precisely a very small variation of it) 
\begin{equation*}
\mathcal{E}_\epsilon(\gamma_\epsilon) \geq \mathcal{E}(\gamma_\epsilon) \geq (G(u_\epsilon'(0)) - G(u_\epsilon'(1))^2 = c_0^2, 
\end{equation*}
a contradiction. From this computation also follows that $u_\epsilon'(0), u_\epsilon'(1)$ cannot both be infinite. Without loss of generality we can assume that for infinitely many $\epsilon > 0 $ we have $\beta_1(\gamma_\epsilon) = 0 $ and $u_\epsilon'(0) < \infty $. From now on consider only such $\epsilon$. If $u_\epsilon'(0), |u_\epsilon'(1)| < \infty $ it follows from Proposition \ref{prop:p2} that $u_\epsilon \in W^{2,2}(0,1)$ and therefore $u_\epsilon$ is a graph that touches the obstacle, due to Proposition \ref{prop:bpunkt}. Hence, according to Lemma \ref{lem:lenghtbound2},
\begin{equation*}
\mathcal{L}(\gamma_\epsilon) \leq 2 \left( \sup_{x \in (0,1)} \psi(x)  + || \psi'||_\infty \right) + 1. 
\end{equation*}
In case that $u_\epsilon'(0) < \infty $, $u_\epsilon'(1)= -\infty $, recall that again by Proposition \ref{prop:p1}
\begin{equation*}
c_0^2 -  \delta \geq \mathcal{E}_\epsilon(\gamma_\epsilon) \geq (G (u_\epsilon'(0))- G(u_\epsilon'(1)))^2 = \left( G(u_\epsilon'(0)) + \frac{c_0}{2} \right)^2. 
\end{equation*}
Therefore 
\begin{equation*}
u_\epsilon'(0) \leq G^{-1} \left(  \sqrt{c_0^2 - \delta }- \frac{c_0}{2} \right)
\end{equation*}
and hence Lemma \ref{lem:lengthbd} yields that 
\begin{equation*}
\mathcal{L}(\gamma_\epsilon) \leq G^{-1} \left(  \sqrt{c_0^2 - \delta }- \frac{c_0}{2} \right) + \sqrt{1+ G^{-1} \left( \left(  \sqrt{c_0^2 - \delta }- \frac{c_0}{2} \right) \right)^2 }. 
\end{equation*}
All in all, we can infer that $\mathcal{L}(\gamma_\epsilon) \leq C$, which is exactly the bound given in Theorem \ref{thm:main} by our choice of $\delta$. Now consider the constant-velocity reparametrization of $\gamma_\epsilon$ which we will  call $\gamma_\epsilon$ again. Then, as an obvious consequence of Proposition \ref{prop:p7}, $\gamma_\epsilon$ is bounded in $W^{2,2} ((0,1);\mathbb{R}^2) $ and therefore possesses a $W^{2,2}-$weakly convergent subsequence $\gamma_{\epsilon'} $ converging to some $\gamma \in W^{2,2}((0,1);\mathbb{R}^2)$, satisfying 
\begin{equation*}
\mathcal{E}(\gamma ) \leq \liminf_{\epsilon' \rightarrow 0} \mathcal{E} (\gamma_{\epsilon'}) . 
\end{equation*}
Note that $\gamma$ also satisfies 
\begin{equation*}
\mathcal{L}(\gamma) = \lim_{ \epsilon' \rightarrow 0 } \mathcal{L}(\gamma_{\epsilon'}) \leq C.
\end{equation*}
Additionally, $\gamma$ is parametrized with constant velocity as weak $W^{2,2}$ limit of curves that are parametrized with constant velocity. It remains to show that $\gamma \in P_\psi$ but this is very similar to one of the crucial steps in the proof of Proposition \ref{prop:p8}. 
Further,  $\gamma$ is indeed a minimizer of $\mathcal{E}$ since for fixed $\iota \in P_\psi$ we find 
\begin{align*}
\mathcal{E}(\iota) & = \lim_{\epsilon \rightarrow 0 } \mathcal{E}_\epsilon(\iota) 
\geq   \limsup_{\epsilon \rightarrow 0 } \mathcal{E}_\epsilon(\gamma_\epsilon) =  \limsup_{\epsilon \rightarrow 0 }  (\mathcal{E}(\gamma_\epsilon) + \epsilon \mathcal{L}(\gamma_\epsilon)) \\ & \geq  \liminf_{\epsilon' \rightarrow 0 } (\mathcal{E}(\gamma_{\epsilon'}) + \epsilon' \mathcal{L}(\gamma_{\epsilon'}))=  \liminf_{\epsilon' \rightarrow 0 } \mathcal{E}(\gamma_{\epsilon'} )  \geq \mathcal{E}(\gamma), 
\end{align*}
 having used the weak lower semicontinuity of $\mathcal{E}$ and the fact that $\mathcal{L}(\gamma_{\epsilon'})$ can be bounded uniformly in $\epsilon'$. 

\end{proof}
\begin{remark}
We have found a framework for the obstacle problem in \cite{Anna}, where a minimizer  exists. However, \cite{Anna} provides a lot more results on the shape of a minimizer, which brings up more questions in the new framework. One example is the question, whether symmetric obstacles necessarily lead to existence of symmetric minimizers. Having characterized a minimizer of $\mathcal{E}$ as a limit of $\mathcal{E}_\epsilon$ and knowing some properties of the (Jacobi-elliptic) functions that solve the Euler-Lagrange equation for $\mathcal{E}_\epsilon$, for example from \cite[Section 4.3]{Miura3} and from \cite{Mandel}, it might be possible to understand minimizers more explicitly than we do so far. 
\end{remark}
\begin{remark}
Let us stress the relation of our approach to the theory of $\Gamma$-convergence. For $j \in \mathbb{N}$ consider the functional $F_j : W^{1,2}((0,1);\mathbb{R}^2) \rightarrow \overline{\mathbb{R}}$ 
\begin{equation*}
F_j(\gamma) := \begin{cases} 
\int_\gamma \kappa^2 d\mathbf{s} + \frac{1}{j} \int_\gamma d\mathbf{s} & \gamma \in W^{2,2}((0,1);\mathbb{R}^2) \cap P_\psi  \, \textrm{parametrized s.t. $|\gamma'| \equiv$ const.} \\ \infty  & \textrm{otherwise} 
\\ \end{cases}. 
\end{equation*}
Then $(F_j)_{j \in \mathbb{N}}$ is a decreasing sequence and as such, it $\Gamma$-converges according to \cite[Remark 1.40]{Braides} to the lower semi-continuous envelope of the pointwise limit of $(F_j)_{j \in \mathbb{N}}$. Obviously, the pointwise limit is given by 
\begin{equation*}
F(\gamma)  = \begin{cases} \int_\gamma \kappa^2 d\mathbf{s} & \gamma \in W^{2,2}((0,1);\mathbb{R}^2) \cap P_\psi \;  \textrm{and is parametrized with constant velocity}  \\ \infty & \textrm{otherwise} \end{cases}
\end{equation*} 
Let us show that $F$ is lower semi-continuous in $W^{1,2}((0,1);\mathbb{R}^2)$, so that its lower semi-continuous envelope coincides with $F$. Let $(\gamma_n)_{n \in \mathbb{N}}$ be a sequence that converges to some $\gamma$ in $W^{1,2}$ and satisfies that $(F(\gamma_n))_{n \in \mathbb{N}}$ is bounded. Then $(\gamma_n)_{n \in \mathbb{N}}$ defines a bounded sequence in $W^{2,2}((0,1);\mathbb{R}^2)$ and has a weakly convergent subsequence in $W^{2,2}((0,1);\mathbb{R}^2)$. Using the embedding $W^{2,2} \hookrightarrow C^1$ it can be shown that $\gamma$ is parametrized with constant velocity and as such particularly immersed. We infer that $\gamma \in P_\psi$ using the techniques of the proof of Proposition \ref{prop:p8}. Since $\mathcal{E}$ is weakly lower semi-continuous we obtain that 
\begin{equation*}
F(\gamma) \leq \liminf_{n \rightarrow \infty } F(\gamma_n). 
\end{equation*} 
We assumed to begin with that $(F( \gamma_n ))_{n \in \mathbb{N}}$ is bounded, but this is not restrictive, since as long as $\liminf_{n\rightarrow \infty} F(\gamma_n)$ is a real number, we can pick a bounded subsequence converging to the Limes Inferior. All in all we have shown that $F$ is the $\Gamma$-limit of $(F_j)_{j \in \mathbb{N}}$ in $W^{1,2}((0,1);\mathbb{R}^2)$. 
\\

Recall now the fundamental theorem of $\Gamma$-convergence  \cite[Theorem 1.21]{Braides}, saying that provided that we can find a compact set $ K \subset W^{1,2}((0,1);\mathbb{R}^2)$ such that 
\begin{equation*}
\forall j \in \mathbb{N} : \qquad  \inf_{W^{1,2}((0,1);\mathbb{R}^2)} F_j  = \inf_K F_j 
\end{equation*}  
then there exists a minimizer for $F$ and every precompact sequence $(\tau_j)$  such that 
\begin{equation}\label{eq:approxim}
F_j(\tau_j) = \inf_{W^{1,2}} F_j + o(1) 
\end{equation}
has a subsequence that converges to some minimizer of $F$. Especially
\begin{equation*}
\min_{W^{1,2}} F = \lim_{j \rightarrow \infty } \inf_{W^{1,2}} F_j.
\end{equation*}
 Let $\gamma_j$ be the sequence of constant-velocity-parametrized minimizers of $\mathcal{E}_\frac{1}{j}$ as already considered in the proof of Theorem \ref{thm:main}. With the same arguments $\{ \gamma_j | j \in \mathbb{N} \}$ is bounded in $W^{2,2}$ so precompact in $W^{1,2}$ which means that its closure is compact. Therefore, $K$ can be chosen to be this exact closure to ensure that the prerequisites of the Fundamental Theorem of $\Gamma$-convergence are satisfied. We actually get an even more interesting result out of this: 
 \\
 
Each minimizer $\gamma$ of $F$ is a $W^{1,2}$- limit of a sequence $(\tau_j)$ satisfying \eqref{eq:approxim}. Indeed: If $\gamma$ is a minimizer of $F$, then there exists a recovery sequence for $\gamma$, which would satisfy \eqref{eq:approxim} and the claim follows. 
\end{remark}
\appendix

 \section{Proofs of Results in Section 2.1}
\begin{proof}[Proof of Proposition \ref{prop:p1}] 
Using the Cauchy-Schwarz inequality we find
\begin{eqnarray*}
\mathcal{E}(u) & = & \int_0^1 \frac{u''(x)^2 }{(1 + u'(x)^2)^\frac{5}{2}}   \geq  \left( \int_0^1 \frac{|u''(x)|}{(1+ u'(x)^2)^\frac{5}{4}} \right) \\ &  \geq & \left( \int_{b_1}^{b_2} \frac{|u''(x)|}{(1 + u'(x)^2)^\frac{5}{4}} dx \right)  \geq   \left( \int_{b_1}^{b_2} \frac{d}{dx} (G \circ u' )(x) dx \right)^2
\end{eqnarray*}
since $G \in C^\infty(\mathbb{R}), G(0) = 0 $ and $G$ has bounded derivative, so  \cite[Theorem 4(ii), Section 4.2.2.]{Evansgariepy} applies. 
\end{proof}

\begin{proof}[Proof of Proposition \ref{prop:p5}] The $C^2$-regularity follows from \cite[Theorem 5.1]{Anna} and concavity is a direct consequence of \cite[Lemma 2.1]{Anna}.
Assertion $(2)$ follows from  \cite[Proposition 3.2]{Anna} using the product rule and that 
\begin{equation*}
\frac{d}{dx} \frac{1}{(1+ u'(x)^2)^\frac{3}{4}} = -\frac{3}{2}\frac{u''(x) u'(x)}{(1+ u'(x)^2)^\frac{7}{4}} = - \frac{3}{2}\frac{k(x) u'(x)}{(1+ u'(x)^2)^\frac{1}{4}}.
\end{equation*}
For (3) we compute 
\begin{eqnarray*}
\frac{d}{dx} \frac{v'(x)}{(1 + u'(x)^2)^\frac{5}{4} } &=& \frac{v''(x)}{(1+ u'(x)^2)^\frac{5}{4}}  - \frac{5}{2} \frac{v'(x) u'(x) u''(x)}{(1+ u'(x)^2)^\frac{9}{4}} \\ & =&  \frac{v''(x)}{(1+ u'(x)^2)^\frac{5}{4}}  - \frac{5}{2} \frac{v'(x) u'(x) k(x)}{(1+ u'(x)^2)^\frac{3}{4}} \\ &= & \frac{1}{\sqrt{1+ u'(x)^2}} \left( \frac{v''(x)}{(1+ u'(x)^2)^\frac{3}{4}}  - \frac{5}{2} \frac{v'(x) u'(x) k(x)}{(1+ u'(x)^2)^\frac{1}{4}} \right) = 0. 
\end{eqnarray*}
Assertion $(4)$ follows from \cite[Corollary 3.3]{Anna} and (5) follows from (2) by the maximum principle (\cite[Section 6.4, Theorem 2]{Evans}). For $(6)$ observe that there is $\delta > 0 $ such that $u > \psi$ on $[0, \delta]$ and according to assertion (1), since $\mathrm{sgn}(u'') = \mathrm{sgn}(v)$, it holds that $v \leq 0 $. Hence it holds for $0 \leq a \leq b \leq \delta$
\begin{equation*}
-v(a) = |v(a) | = \max_{x \in [0,a]} |v(x)| \leq \max_{x \in [0,b]} |v(x)| = \max\{ |v(b)|, |v(0)|\}   = |v(b)| = -v(b) . 
\end{equation*}
Multiplying by $-1$ we have shown the decrease in a neighborhood of $0$. The proof in analogous for a neighborhood of $1$. Assertion (7) is \cite[Corollary 3.4]{Anna}. 
\end{proof}

\begin{proof}[Proof of Proposition \ref{prop:p2}]
Since $u' \in L^\infty(0,1)$, we find 
\begin{equation*}
\mathcal{E}_\epsilon (u) = \int_0^1 \left( \frac{u''^2}{(1+ u'^2)^\frac{5}{2}} dx + \epsilon  \sqrt{1+ u'^2} \right)  dx \geq \frac{1}{( 1 + ||u'||_\infty^2)^\frac{5}{2}} \int_0^1 u''^2 dx .
\end{equation*}
Therefore $\mathcal{E}_\epsilon(u) < \infty $ implies that $||u''||_{L^2(0,1)} < \infty$.
\end{proof}

\section{Proofs of Results in Section 2.2}
\begin{proof}[Proof of Proposition \ref{prop:p7}]
If $\gamma$ is parametrized with constant velocity, then $|\gamma'| \equiv \mathcal{L}(\gamma)$. Let $ T = \frac{\gamma'}{|\gamma'|} $ be the tangential vector. Then $\{ T , N \} $ forms an orthonormal basis of $\mathbb{R}^2$ and thus
\begin{eqnarray*}
\mathcal{E}(\gamma) & =&  \int_0^1 \frac{\left\langle \gamma'', N \right\rangle^2}{|\gamma'|^3} dt \\ & =& \int_0^1 \frac{|\gamma''|^2- \left\langle \gamma'', T \right\rangle^2}{\mathcal{L}(\gamma)^3} dt.
\end{eqnarray*}
Now note that 
\begin{equation*}
0 = \frac{d}{dt} \mathcal{L}(\gamma)^2 = \frac{d}{dt} |\gamma'|^2 = 2 \left\langle \gamma'', \gamma' \right\rangle, 
\end{equation*} 
so $\left\langle \gamma'', T \right\rangle = 0 $ and
\begin{equation*}
\mathcal{E}_\epsilon(\gamma) = \frac{1}{\mathcal{L}(\gamma)^3}\int_0^1 | \gamma''|^2 + \epsilon \mathcal{L}(\gamma).  
\end{equation*}
  Note also that $\mathcal{L}(\gamma) < \infty $ and therefore $\mathcal{E}_\epsilon(\gamma) < \infty $ yields that $ ||\gamma''||_{L^2}^2 < \infty.$ 
\end{proof}

\begin{proof}[Proof of Proposition \ref{prop:p8} ] 
Fix $\epsilon > 0 $ and assume that $(\gamma_n) \subset P_\psi$ is a minimizing sequence for $\mathcal{E}_\epsilon$ parametrized with constant velocity. According to Proposition \ref{prop:p7} $(\gamma_n)_{n \in \mathbb{N}} \subset W^{2,2} ((0,1);\mathbb{R}^2)$ and satisfies 
\begin{equation}\label{eq:constivelo}
\mathcal{E}_\epsilon(\gamma_n) = \frac{1}{\mathcal{L}(\gamma_n)^3} \int_0^1 |\gamma_n''|^2 dx + \epsilon \mathcal{L}(\gamma_n).
\end{equation}
Let $M > 0 $ be such that $\mathcal{E}_\epsilon(\gamma_n) \leq M$ for all $n \in \mathbb{N}$. We infer from \eqref{eq:constivelo}  first that 
\begin{equation*}
\mathcal{L}(\gamma_n) \leq \frac{M}{\epsilon}, 
\end{equation*}
and then
\begin{equation*}
\int_0^1 |\gamma_n''|^2 \leq \frac{M^4}{\epsilon^3}.
\end{equation*}
This together with the boundary conditions yields that $(\gamma_n)_{n \in \mathbb{N}}$ defines a bounded sequence in $W^{2,2}((0,1);\mathbb{R}^2) $ and therefore we can extract a weakly $W^{2,2}((0,1);\mathbb{R}^2)$-convergent subsequence, the weak limit of which we will call $\gamma$. Weak lower semicontinity of the $L^2$-norm and the fact the $\mathcal{L}(\gamma_n) \rightarrow \mathcal{L}(\gamma)$ by virtue of compactness of the embedding $W^{2,2}((0,1);\mathbb{R}^2) \hookrightarrow W^{1,2}((0,1);\mathbb{R}^2)$ implies that 
\begin{equation*}
\mathcal{E}_\epsilon(\gamma)  \leq \liminf_{n \rightarrow \infty } \mathcal{E}_\epsilon(\gamma_n). 
\end{equation*} 
It remains to show that $\gamma$ is a pseudograph. For this, we prove now that $\gamma$ is immersed, attains the prescribed values at the boundary, and every local graph reparametrization of $\gamma$ lies in $W^{2,2}_{loc}$. That $\gamma
$ attains the right values at the boundary follows from continuity of the embedding $W^{1,2}((0,1);\mathbb{R}^2) \hookrightarrow C( [0,1]; \mathbb{R}^2 )$.
That $\gamma$ is immersed follows from  the compact embedding $W^{2,2} \hookrightarrow C^1 $ and 
\begin{equation*}
|\gamma'(t)| = \lim_{n \rightarrow \infty} |\gamma_n'(t)| = \lim_{n \rightarrow \infty } \mathcal{L}(\gamma_n).
\end{equation*}
as well as 
\begin{equation*}
\liminf_{n \rightarrow \infty} \mathcal{L}(\gamma_n) \geq  \liminf_{n \rightarrow \infty} || \gamma_n(1) - \gamma_n(0) || = 1 . 
\end{equation*}
Assume now that $\gamma_1' > 0 $ on some interval $(a,b)$. Again, continuity of the embedding $W^{2,2}(a,b) \hookrightarrow C^1([a,b])$  implies that for each $\theta > 0$ there is $\delta > 0 $ such that $\frac{1}{\delta} > \gamma_1' > \delta $ on $(a+ \theta , b- \theta) $. Therefore $\gamma_{2} \circ \gamma_1^{-1}$ is a $W^{2,2}$ function composed with a $C^1$-diffeomorphism in $(a+ \theta, b- \theta)$. Certainly we can compute the first derivative classically : 
\begin{equation*}
\frac{d}{dx} (\gamma_2 \circ \gamma_1^{-1})(x) = \gamma_2'( \gamma_1^{-1} (x) ) (\gamma_1^{-1})'(x) = \frac{\gamma_2'(\gamma_1^{-1} (x)) }{\gamma_1'(\gamma_1^{-1} (x)) } .
\end{equation*} 
For the computation to come note that an elementary computation using \cite[Section 4.2.2, Theorem 4]{Evansgariepy} would reveal that $\frac{\gamma_2'}{\gamma_1'}$ is weakly differentiable in $(a+ \theta , b- \theta)$ with weak derivative given by $ \frac{\gamma_2'' \gamma_1' - \gamma_1'' \gamma_2'}{(\gamma_1')^2}$. 
For the second derivative define $d := \gamma_1(a+\theta), e:= \gamma_1(b-\theta)$ and fix $\phi\in C_0^\infty(d,e)$. Then, using \cite[Section 3.4.3, Theorem 2]{Evansgariepy} and recalling that $\phi \circ \gamma_1 \in C^1_0(a+ \theta,b- \theta)$ we find 
\begin{align*}
\int_{d}^{e} \frac{\gamma_2'(\gamma_1^{-1}(x))}{\gamma_1'(\gamma_1^{-1}(x)) } \phi'(x) dx & = \int_{a+ \theta}^{b- \theta} \frac{\gamma_2'(t)}{\gamma_1'(t)} \phi'(\gamma_1(t)) \gamma_1'(t) dt \\ &=  \int_{a+ \theta}^{b- \theta} \frac{\gamma_2'(t) }{\gamma_1'(t) } \frac{d}{dt} \left[ \phi(\gamma_1(t) ) \right] dt =  - \int_{a+ \theta}^{b- \theta} \left( \frac{d}{dt} \frac{\gamma_2'(t)}{\gamma_1'(t) } \right) \phi(\gamma_1(t) ) dt \\ &=  - \int_{a+ \theta}^{b- \theta} \frac{\gamma_2''(t) \gamma_1'(t) - \gamma_1''(t) \gamma_2'(t)}{\gamma_1'(t)^2} \phi(\gamma_1(t)) dt \\ &=  -\int_d^e \left. \frac{\gamma_2'' \gamma_1' - \gamma_1'' \gamma_2'}{(\gamma_1')^2} \right\vert_{\gamma_1^{-1}(x)} (\gamma_1^{-1})'(x) \phi(x) dx .
\end{align*}
Because derivatives of $\gamma_1$ and $\gamma_1^{-1}$ are $L^\infty_{loc}$, we obtain
\begin{equation*}
 \left( \frac{\gamma_2'' \gamma_1' - \gamma_1'' \gamma_2'}{(\gamma_1')^2} \circ \gamma_1^{-1} \right) \cdot (\gamma_1^{-1})' \in L^2(d,e).
\end{equation*}
The claim follows.
\end{proof}

\begin{proof}[Proof of Proposition \ref{prop:concavity}]
We show that for every given $u \in G_\psi$ there exists a concave $v \in G_\psi$ such that $\mathcal{E}_\epsilon(v) \leq \mathcal{E}_\epsilon(u)$. For this, we adopt the construction in \cite[Lemma 2.1]{Anna}. Here we have to replace only the penalization term. The construction gives us $v \in G_\psi$ such that 
\begin{itemize}
\item $v$ is concave,
\item $\mathcal{E}(v) \leq \mathcal{E}(u)$, 
\item If we define $I:= \{x \in (0,1) : u(x) = v(x) \} $. Then $u' = v' $ on $I$ and $v'' = 0$ on $(0,1) \setminus I$.
\end{itemize}
Now it remains to show that 
\begin{equation*}
\int_0^1 \sqrt{1+ v'^2}  dx \leq \int_0^1 \sqrt{1+ u'^2} dx
\end{equation*}
Using the given properties and the estimate 
\begin{equation*}
\sqrt{1+ u'^2} - \sqrt{1+ v'^2 } \geq \frac{v'}{\sqrt{1+ v'^2}} (u' - v') ,
\end{equation*}
which is due to the convexity of $x \mapsto \sqrt{1+ x^2}$, we find
 proceeding similarly to the rest of the proof of \cite[Lemma 2.1]{Anna}
\begin{eqnarray*}
\int_0^1 \sqrt{1 + v'^2} dx & = & \int_I \sqrt{1 + u'^2 } dx + \int_{(0,1) \setminus I } \sqrt{1+ v'^2  } dx    \\ & \leq & \int_0^1 \sqrt{1+ u'^2} dx - \int_{(0,1) \setminus I } \frac{v'}{\sqrt{1 + v'^2}} ( u' - v') dx.
\end{eqnarray*}

If now $(a,b)$ is a connected component of $(0,1) \setminus I$ then $v'$ is constant on $(a,b)$, say $v' \equiv C$. Therefore  
\begin{eqnarray*}
\int_{(a,b)} \frac{v'}{\sqrt{1+ v'^2}}(u' - v') & = & \frac{C}{\sqrt{1+ C^2}} \int_a^b (u'(x) - v'(x)) dx  \\ & = & \frac{C}{\sqrt{1+ C^2}} (u(a) - v(a)- u(b) + v(b) ) = 0 ,
\end{eqnarray*}
since $a,b \in I$. Therefore 
\begin{equation*}
\int_{(0,1) \setminus I } \frac{v'}{\sqrt{1 + v'^2}} ( u' - v') dx = 0 ,
\end{equation*}
which completes the proof.
\end{proof}

\begin{proof}[Proof of Proposition \ref{prop:bpunkt}] 
We follow the lines of \cite[Corollary 3.4]{Anna}. Assume that $u> \psi$ on $(0,1)$. Then $u \in C^\infty([0,1])$. Proposition \ref{prop:p9} (2) yields that 
\begin{equation*}
\min_{x \in [0,1] } v(x) = \min(v(0), v(1)) = 0
\end{equation*}
and hence $v \geq 0 $. On the other hand, concavity implies that $v \leq 0 $. So $v \equiv 0$ and therefore $u'' \equiv 0$. Since $u(0) = u(1) = 0$ we find that $u \equiv 0$ which is impossible since we required the obstacle to be strictly positive at at least one point. 
\end{proof}
\section{Hypergeometric Functions}

\begin{definition}(Hypergeometric Function, \cite[Definition 2.1.5]{Andrews})\label{def:hypgeo}

Let $a,b,c,z \in \mathbb{C}$. Define for $n \in \mathbb{N}$
\begin{equation*}
(a)_n := \frac{\Gamma(a+n)}{\Gamma(n)}.
\end{equation*}
Then we define $\mathit{HYP2F1}$ to be the analytic continuation of the holomorphic function 
\begin{equation*}
\{z \in \mathbb{C} | \; |z| < 1 \} \ni w \mapsto \sum_{ w = 0 }^\infty \frac{(a)_n (b)_n}{(c)_n} \frac{w^n}{n!} \in \mathbb{C},
\end{equation*}
whenever the series converges conditionally.
\end{definition}

\begin{prop}\label{prop:pfaff} (Pfaff's Transformation for Hypergeometric Functions,\cite[Theorem 2.2.5]{Andrews})

For each $a,b,c,z \in \mathbb{C}$ it holds that 
\begin{equation*}
\mathit{HYP2F1} ( a,b,c, z) = \frac{1}{(1-z)^a} \mathit{HYP2F1}  \left(a, c-b , c , \frac{z}{z-1} \right)  , 
\end{equation*}
as equality of meromorphic functions.
\end{prop}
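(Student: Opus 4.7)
The plan is to reduce this to a substitution in Euler's integral representation of the hypergeometric function, and then extend by analytic continuation. First I would invoke Euler's integral representation: for $\mathrm{Re}(c) > \mathrm{Re}(b) > 0$ and $z \in \mathbb{C} \setminus [1,\infty)$,
\begin{equation*}
\mathit{HYP2F1}(a,b,c,z) = \frac{\Gamma(c)}{\Gamma(b)\Gamma(c-b)} \int_0^1 t^{b-1}(1-t)^{c-b-1}(1-zt)^{-a}\, dt,
\end{equation*}
which can itself be obtained by expanding $(1-zt)^{-a}$ in its binomial series (valid for $|zt| < 1$), integrating term-by-term, and recognising the resulting beta integrals as $\frac{(b)_n}{(c)_n}\frac{\Gamma(b)\Gamma(c-b)}{\Gamma(c)}$.

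Next I would perform the change of variables $t = 1 - s$ in this integral. The factor $(1-zt)^{-a}$ becomes $(1 - z + zs)^{-a}$, and after factoring out $(1-z)^{-a}$ one obtains $(1-z)^{-a}\bigl(1 - \tfrac{z}{z-1} s\bigr)^{-a}$. The remaining weight $s^{c-b-1}(1-s)^{b-1}$ is precisely the Euler-integral weight with $b$ replaced by $c-b$. Thus
\begin{equation*}
\mathit{HYP2F1}(a,b,c,z) = (1-z)^{-a}\,\frac{\Gamma(c)}{\Gamma(b)\Gamma(c-b)} \int_0^1 s^{c-b-1}(1-s)^{b-1}\bigl(1 - \tfrac{z}{z-1}s\bigr)^{-a} ds,
\end{equation*}
and the integral on the right is $\mathit{HYP2F1}(a,c-b,c,\tfrac{z}{z-1})$ by a second application of Euler's representation (now with the pair $b, c-b$ swapped).

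Finally, the identity currently holds under the restrictive parameter conditions $\mathrm{Re}(c) > \mathrm{Re}(b) > 0$ and $z$ in a suitable domain where $|z/(z-1)| < 1$. To promote it to the statement as displayed, I would invoke the identity theorem for holomorphic functions: both sides are meromorphic in $(a,b,c,z)$ on the domain stated (regarded as analytic continuations of the series defining $\mathit{HYP2F1}$), so agreement on an open set forces agreement on the common domain of meromorphy. The main obstacle, such as it is, is purely bookkeeping — making sure that the regions of validity of the two Euler representations overlap nontrivially so that the identity theorem applies, and tracking the principal branch of $(1-z)^{-a}$ under the change of variable. None of the steps is harder than verifying convergence and standard beta-integral identities.
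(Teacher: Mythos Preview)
Your argument is correct and is in fact the standard proof: Euler's integral representation, the substitution $t\mapsto 1-s$, and analytic continuation in the parameters and in $z$. There is nothing to compare against in the paper itself, since the proposition is not proved there but merely cited from \cite[Theorem~2.2.5]{Andrews}; the proof in that reference proceeds exactly as you outline.
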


\begin{prop} \label{prop:valat1} (Values of the Hypergeometric Functions at $z = 1$, \cite[Theorem 2.2.2]{Andrews})

Assume that $\mathrm{Re}(c-b-a ) > 0 $. Then
\begin{equation*}
\mathit{HYP2F1} (a,b,c, 1) = \frac{\Gamma(c) \Gamma(c-b-a) }{\Gamma(c-a) \Gamma(c-b) }.
\end{equation*}
\end{prop}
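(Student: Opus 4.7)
My plan is to derive Gauss's summation theorem via Euler's integral representation. The first step is to establish, for $\mathrm{Re}(c) > \mathrm{Re}(b) > 0$ and $|z| < 1$, the identity
\begin{equation*}
\mathit{HYP2F1}(a,b,c,z) = \frac{\Gamma(c)}{\Gamma(b)\Gamma(c-b)} \int_0^1 t^{b-1}(1-t)^{c-b-1}(1-zt)^{-a}\, dt.
\end{equation*}
This follows by expanding $(1-zt)^{-a}$ as a binomial series in $t$, interchanging sum and integral via absolute convergence, and recognizing each term as a Beta integral; the identity $B(x,y) = \Gamma(x)\Gamma(y)/\Gamma(x+y)$ then reproduces the defining series of $\mathit{HYP2F1}$ term by term.

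Next I would send $z \to 1^-$. Under the hypothesis $\mathrm{Re}(c-a-b) > 0$ the integrand converges pointwise to $t^{b-1}(1-t)^{c-b-a-1}$, which is integrable on $(0,1)$, and a dominated-convergence argument (using $(1-t)^{\mathrm{Re}(c-b-a)-1}$ as an integrable majorant near $t = 1$ for $z$ close to $1$) yields
\begin{equation*}
\mathit{HYP2F1}(a,b,c,1) = \frac{\Gamma(c)}{\Gamma(b)\Gamma(c-b)} \int_0^1 t^{b-1}(1-t)^{c-b-a-1}\, dt = \frac{\Gamma(c)\,\Gamma(c-a-b)}{\Gamma(c-a)\Gamma(c-b)},
\end{equation*}
invoking the Beta-Gamma identity once more. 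This proves the formula under the auxiliary constraint $\mathrm{Re}(c) > \mathrm{Re}(b) > 0$.

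To remove this constraint I would appeal to analytic continuation in the parameters. Both sides are meromorphic in $(a,b,c)$ on a connected open subdomain of $\mathbb{C}^3$ contained in $\{\mathrm{Re}(c-a-b) > 0\}$ and avoiding the poles of $\Gamma$, and they coincide on the nonempty open subset where $\mathrm{Re}(c) > \mathrm{Re}(b) > 0$; the identity theorem for several complex variables then forces agreement on the full domain. Convergence of the series at $z = 1$ under just the hypothesis $\mathrm{Re}(c-a-b) > 0$ is justified by a Raabe-type test, since the ratio of successive terms at $z = 1$ behaves like $1 - (c-a-b+1)/n + O(n^{-2})$. The main obstacle is precisely this analytic continuation step: one has to track the poles of $\Gamma(c-a)\Gamma(c-b)\Gamma(b)\Gamma(c-b)$ and exhibit a connected open parameter region on which both sides are holomorphic and which contains the subregion where the integral representation is available. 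A more combinatorial alternative would be to derive contiguous relations for $\mathit{HYP2F1}(a,b,c,1)$ and verify that the right-hand side satisfies the same recursion with a matching base case, but the analytic approach sketched above is the most transparent.
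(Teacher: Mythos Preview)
The paper does not supply its own proof of this proposition; it is quoted directly from \cite[Theorem 2.2.2]{Andrews} and used as a black box in Lemma~\ref{lem:finiteness}. Your sketch is the classical derivation via Euler's integral representation and is essentially what one finds in Andrews--Askey--Roy, so there is nothing to compare against and your argument is sound.

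One small remark on the dominated-convergence step: your proposed majorant $(1-t)^{\mathrm{Re}(c-b-a)-1}$ works cleanly only when $\mathrm{Re}(a) \geq 0$, since then $1 - zt \geq 1 - t$ gives $(1-zt)^{-\mathrm{Re}(a)} \leq (1-t)^{-\mathrm{Re}(a)}$. When $\mathrm{Re}(a) < 0$ one instead bounds $(1-zt)^{-\mathrm{Re}(a)} \leq 1$ and the integrable majorant near $t=1$ becomes $(1-t)^{\mathrm{Re}(c-b)-1}$. This is a routine case split and does not affect the validity of your outline.
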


\begin{lemma}\label{lem.induction}
Let $\theta \in \mathbb{N}_0$. Then 
\begin{equation*}
\int_0^1 \frac{s^\theta}{\sqrt{1-s}} = \prod_{l=1}^\theta \frac{2 l}{(2l+1) }.
\end{equation*}
\end{lemma}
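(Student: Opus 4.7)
The natural approach is induction on $\theta$. For the base case $\theta = 0$, the integral reduces to $\int_0^1 (1-s)^{-1/2}\,ds$, which evaluates directly via the antiderivative $-2\sqrt{1-s}$, and this should match the empty-product convention on the right-hand side.

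For the inductive step, I would integrate by parts with $u = s^{\theta+1}$ and $dv = (1-s)^{-1/2}\,ds$, so that $v = -2\sqrt{1-s}$. The boundary term $[-2 s^{\theta+1} \sqrt{1-s}]_0^1$ vanishes at both endpoints, leaving
\begin{equation*}
\int_0^1 \frac{s^{\theta+1}}{\sqrt{1-s}}\,ds = 2(\theta+1) \int_0^1 s^\theta \sqrt{1-s}\,ds.
\end{equation*}
Then I would rewrite $\sqrt{1-s} = (1-s)/\sqrt{1-s}$ to express the right-hand side as $I_\theta - I_{\theta+1}$ in the notation $I_k := \int_0^1 s^k/\sqrt{1-s}\,ds$. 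Solving the resulting linear relation yields the recurrence $I_{\theta+1} = \frac{2(\theta+1)}{2\theta+3}\,I_\theta$, which matches the recursion satisfied by $\prod_{l=1}^\theta \frac{2l}{2l+1}$ and so closes the induction.

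I do not expect any real obstacle: the entire argument is a two-line application of the fundamental theorem of calculus, and all boundary terms vanish because the factor $s^{\theta+1}$ kills them at $s=0$ while $\sqrt{1-s}$ kills them at $s=1$. As a sanity check (and an alternative proof if one wished to avoid induction) the integral is precisely the Beta integral $B(\theta+1, 1/2) = \Gamma(\theta+1)\Gamma(1/2)/\Gamma(\theta+3/2)$, and the product form follows from the recursion $\Gamma(x+1) = x\Gamma(x)$ applied to $\Gamma(\theta+3/2)$ through the half-integer values $1/2, 3/2, \ldots, \theta + 1/2$; this route, however, is heavier than the direct induction and is unnecessary given that only the recurrence is needed.
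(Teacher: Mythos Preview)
Your approach is essentially identical to the paper's: both argue by induction on $\theta$, use integration by parts with $u=s^{\theta+1}$ and $dv=(1-s)^{-1/2}\,ds$, rewrite $\sqrt{1-s}=(1-s)/\sqrt{1-s}$ to obtain $I_{\theta+1}=2(\theta+1)(I_\theta-I_{\theta+1})$, and solve for the recurrence $I_{\theta+1}=\frac{2(\theta+1)}{2\theta+3}I_\theta$. (One small caveat, present in the paper as well: the base case actually evaluates to $2$, so the stated formula is off by a global factor of $2$; your inductive step is unaffected since it only relies on the recurrence.)
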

\begin{proof}
We proceed by induction over $\theta$. The case $\theta = 0 $ is very obvious. Now suppose the claim is true for some $\theta \in \mathbb{N}_0$. 
\begin{align*}
\int_0^1 \frac{s^{\theta + 1} }{\sqrt{1-s}} ds & = \left[ -  2 \sqrt{1-s} s^{\theta + 1} \right]^{1}_{0} + 2(\theta + 1)\int_0^1 \sqrt{1-s} s^\theta ds
\\ & = (2 \theta + 2)\left( \int_0^1 \frac{s^\theta}{\sqrt{1-s}}
 -\int_0^1 \frac{s^{\theta+ 1}}{\sqrt{1-s}} \right)    
\end{align*} 
and therefore 
\begin{equation*}
\int_0^1 \frac{s^{\theta+1}}{\sqrt{1-s}} ds = \frac{2 \theta+ 2 }{2 \theta + 3} \int_0^1 \frac{s^\theta}{\sqrt{1-s}}ds = \prod_{ l = 1}^\theta \frac{2l }{2l+ 1 }. \qedhere
\end{equation*}
\end{proof}

\begin{lemma} \label{lem:intiden}(Integral identities for hypergeometric functions)

For each $A > 0 $ we have 
\begin{equation*}
\int_0^A \frac{1}{\sqrt{A-t}(1+t^2)^\frac{5}{4}} = \sqrt{A} \cdot \mathit{HYP2F1} \left(1, \frac{1}{2}, \frac{3}{4}, - A^2 \right) ,
\end{equation*}
\begin{equation*}
\int_0^A \frac{t}{\sqrt{A-t}(1+t^2)^\frac{5}{4}} = \frac{2}{3} A^\frac{3}{2} \mathit{HYP2F1} \left( 1, \frac{3}{2} , \frac{7}{4}, -A^2 \right) .
\end{equation*}
\end{lemma}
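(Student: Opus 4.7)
The plan is to reduce both identities to power-series identities and then match coefficients. First, I would substitute $t = As$ in the integrals on the left, producing
\[
\int_0^A \frac{dt}{\sqrt{A-t}(1+t^2)^{5/4}} = \sqrt{A}\int_0^1 \frac{ds}{\sqrt{1-s}(1+A^2 s^2)^{5/4}},
\]
and the analogous formula with an extra factor of $s$ (and a prefactor $A^{3/2}$) for the second integral. For $A\in(0,1)$ we have $A^2 s^2 < 1$ on $[0,1]$, so the binomial series
\[
(1+A^2 s^2)^{-5/4} = \sum_{n=0}^\infty (-1)^n \tfrac{(5/4)_n}{n!}\, A^{2n} s^{2n}
\]
converges uniformly in $s$ by the Weierstrass M-test, which legitimizes the interchange of sum and integral.

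Next I would evaluate the resulting integrals term by term using Lemma \ref{lem.induction}, or equivalently the Beta-function evaluation $\int_0^1 s^\theta (1-s)^{-1/2}\,ds = B(\theta+1,1/2) = \Gamma(\theta+1)\Gamma(1/2)/\Gamma(\theta+3/2)$, with $\theta = 2n$ for the first identity and $\theta = 2n+1$ for the second. This produces explicit power series in $A^2$ whose coefficients involve the Pochhammer symbols $(5/4)_n$ together with ratios of Gamma functions evaluated at half- and quarter-integers.

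The main obstacle is the Gamma-function bookkeeping needed to recognize the resulting series as the right-hand sides. The target coefficients are $(1)_n(1/2)_n/[(3/4)_n\, n!]$ and $(1)_n(3/2)_n/[(7/4)_n\, n!]$. To get from the computed coefficients to these, I would apply Legendre's duplication formula $\Gamma(z)\Gamma(z+1/2)=2^{1-2z}\sqrt{\pi}\,\Gamma(2z)$, first with $z=n+3/4$ to rewrite $\Gamma(2n+3/2)$ and then with $z=n+5/4$ to rewrite $\Gamma(2n+5/2)$. After cancellation of the $(5/4)_n$ coming from the binomial series with the $\Gamma(n+5/4)$ produced by duplication, only a constant prefactor depending on $\Gamma(1/4),\Gamma(3/4),\Gamma(5/4),\Gamma(7/4)$ survives, and it collapses to a rational number by the reflection identity $\Gamma(1/4)\Gamma(3/4)=\pi\sqrt{2}$ together with $\Gamma(5/4)=\frac14\Gamma(1/4)$ and $\Gamma(7/4)=\frac34\Gamma(3/4)$.

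Finally, having established each identity on the interval $A\in(0,1)$, I would extend to all $A>0$ by analytic continuation. The left-hand sides are real-analytic functions of $A$ on $(0,\infty)$ by dominated convergence (or by an explicit substitution $t = A\sigma$), while the right-hand sides are analytic in $A$ on $(0,\infty)$ by Definition \ref{def:hypgeo}; alternatively, Pfaff's transformation (Proposition \ref{prop:pfaff}) rewrites $\mathit{HYP2F1}(a,b,c,-A^2)$ as $(1+A^2)^{-a}\mathit{HYP2F1}(a,c-b,c,A^2/(A^2+1))$ whose series argument stays in $[0,1)$ for every $A>0$. Thus the identity, once verified on $(0,1)$, propagates to all $A>0$ by the identity theorem.
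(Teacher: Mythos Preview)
Your approach is essentially the same as the paper's: substitute $t=As$, expand $(1+A^2s^2)^{-5/4}$ as a binomial series for $A\in(0,1)$, integrate term by term via Lemma~\ref{lem.induction} (i.e.\ the Beta integral), identify the resulting power series as the hypergeometric series, and then extend to all $A>0$ by analyticity. The only difference is in the coefficient bookkeeping: the paper manipulates the products $\prod_{l=1}^{2k}\frac{2l}{2l+1}$ directly into Pochhammer form, whereas you propose routing through Legendre duplication and the reflection formula; both arrive at the same place.
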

\begin{proof}
It is sufficient to show the claim for $A \in (0,1)$ since both expressions are analytic on $\{\mathrm{Re}(A) > 0 \} $ as $z \mapsto \sqrt{z}$ is holomorphic on $\{ \mathrm{Re}(z) > 0 \} $.  Using Lemma \ref{lem.induction} and $(-1)^k{\alpha \choose k}= \frac{1}{k!}(\alpha)_k$ for each $k \in \mathbb{N}_0$ and $\alpha \in \mathbb{R}$ we find
\begin{align*}
\int_0^A \frac{1}{\sqrt{A-t}} \frac{1}{(1+t^2)^\frac{5}{4}} dt & = \sqrt{A} \int_0^1 \frac{1}{\sqrt{1-s}} \frac{1}{(1+ A^2 s^2)^\frac{5}{4}} ds 
\\ &  = \sqrt{A} \int_0^1 \frac{1}{\sqrt{1-s}} \sum_{k = 0}^\infty (-1)^k  {{- \nicefrac{5}{4} } \choose {k}} s^{2k} ( -A^2)^k 
\\ & = \sqrt{A}  \sum_{k= 0 }^\infty (-1)^k {- \nicefrac{5}{4} \choose k } (-A^2)^k \int_0^1 \frac{s^{2k}}{\sqrt{1-s}} ds 
\\ & = \sqrt{A}\sum_{k= 0 }^\infty (-1)^k {- \nicefrac{5}{4} \choose k } (-A^2)^k \prod_{l = 1}^{2k} \frac{2l}{2l+1}
\\ & = \sqrt{A} \sum_{k = 0}^\infty \frac{(\nicefrac{5}{4})_k}{k!}\prod_{l = 1}^{2k} \frac{2l}{2l+1} (- A^2)^k 
\\ & = \sqrt{A} \sum_{k = 0}^\infty \frac{5\cdot 9 \cdot ...\cdot (1+4k)}{4^k k! } \frac{2 \cdot 4 \cdot ... \cdot (4k)}{3 \cdot 5 \cdot ... \cdot (4k+1)} (-A^2)^k 
\\ & = \sqrt{A} \sum_{k = 0 }^\infty \frac{2 \cdot 4 \cdot ... \cdot (4k) }{4^k k! \left[ \prod_{l = 0}^{k-1} (3+4k) \right] } (-A^2)^k
 = \sqrt{A} \sum_{k = 0}^\infty  \frac{2 \cdot 4 \cdot ... \cdot (4k) }{k! 16^k  ( \nicefrac{3}{4})_k } (-A^2)^k 
\\ & = \sqrt{A} \sum_{k = 0}^\infty \frac{2 \cdot 4 \cdot ... \cdot (4k) }{2^{2k}\cdot 2^{2k} \cdot  ( \nicefrac{3}{4} )_k k! } (-A^2)^k 
 = \sqrt{A} \sum_{k = 0}^\infty \frac{(2k)!}{2^{2k} k! (\nicefrac{3}{4})_k } (-A^2)^k 
\\ & = \sqrt{A} \sum_{k = 0}^\infty  \frac{\prod_{l = 0 }^{k-1} (2l + 1)}{2^k} \frac{1}{( \nicefrac{3}{4})_k } (-A^2)^k  = \sqrt{A} \sum_{k = 0}^\infty \prod_{l = 0 }^{k-1} \left( \frac{1}{2} + l \right)  \frac{1}{(\nicefrac{3}{4})_k} (-A^2)^k 
\\ &= \sqrt{A} \sum_{k = 0 }^\infty \frac{(\nicefrac{1}{2})_k (1)_k}{(3/4)_k k!} (-A^2)^k = \sqrt{A} \cdot \mathit{HYP2F1}( 1, \nicefrac{1}{2} , \nicefrac{3}{4} , -A^2) .
 \end{align*}
 The second identity follows using very similar techniques. 
\end{proof}

\begin{lemma} (Completion of the Proof of Theorem \ref{thm:nonexcone})  \label{lem:finiteness}

Define for $A \in (0,\infty)$
\begin{equation*}
G(A):=  \frac{1}{3}A \frac{\mathit{HYP2F1}( 1, \frac{3}{2} ; \frac{7}{4} , -A^2) }{\mathit{HYP2F1}(\frac{1}{2},1, \frac{3}{4}, -A^2)}.
\end{equation*}
Then
\begin{equation*}
\lim_{A \rightarrow \infty} G(A) = \frac{1}{3} \frac{\Gamma( \frac{7}{4}) \Gamma( \frac{1}{4}) }{\Gamma(\frac{3}{4})^2\Gamma(\frac{3}{2})} \simeq 0.834626
\end{equation*}
\end{lemma}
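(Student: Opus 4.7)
The plan is to apply Pfaff's transformation (Proposition \ref{prop:pfaff}) to both the numerator and denominator of $G(A)$, choosing the transformation parameters so that both resulting hypergeometric series satisfy the condition $\mathrm{Re}(c-a-b) > 0$ required by Proposition \ref{prop:valat1}. For the numerator, taking $a = 1$ in Pfaff yields
\begin{equation*}
\mathit{HYP2F1}\!\left(1, \tfrac{3}{2}, \tfrac{7}{4}, -A^2\right) = \frac{1}{1+A^2}\, \mathit{HYP2F1}\!\left(1, \tfrac{1}{4}, \tfrac{7}{4}, \tfrac{A^2}{1+A^2}\right),
\end{equation*}
where $c-a-b = \tfrac{1}{2} > 0$. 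For the denominator, first use the symmetry $\mathit{HYP2F1}(\tfrac{1}{2},1,\tfrac{3}{4},\cdot)=\mathit{HYP2F1}(1,\tfrac{1}{2},\tfrac{3}{4},\cdot)$ coming straight from the series definition, then apply Pfaff with $a = 1/2$ to obtain
\begin{equation*}
\mathit{HYP2F1}\!\left(\tfrac{1}{2}, 1, \tfrac{3}{4}, -A^2\right) = \frac{1}{(1+A^2)^{1/2}}\, \mathit{HYP2F1}\!\left(\tfrac{1}{2}, -\tfrac{1}{4}, \tfrac{3}{4}, \tfrac{A^2}{1+A^2}\right),
\end{equation*}
whose new parameters again satisfy $c-a-b = \tfrac{1}{2} > 0$.

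Setting $w(A) := A^2/(1+A^2)$ and substituting, the power factors $(1+A^2)^{-1}$ and $(1+A^2)^{-1/2}$ combine with the explicit $A$ into
\begin{equation*}
G(A) = \frac{1}{3}\cdot\frac{A}{\sqrt{1+A^2}}\cdot \frac{\mathit{HYP2F1}(1, \tfrac{1}{4}, \tfrac{7}{4}, w(A))}{\mathit{HYP2F1}(\tfrac{1}{2}, -\tfrac{1}{4}, \tfrac{3}{4}, w(A))}.
\end{equation*}
As $A \to \infty$, the prefactor $A/\sqrt{1+A^2}$ tends to $1$ and $w(A) \to 1^-$. Stirling's formula shows the $n$-th Pochhammer coefficient of each of the two remaining hypergeometric series decays like $n^{-3/2}$ (precisely because $c-a-b = 1/2$), so both series converge absolutely at $w = 1$ and, by the Weierstrass M-test, are continuous up to that boundary point. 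Hence one can pass to the limit inside each series and evaluate via Proposition \ref{prop:valat1}:
\begin{align*}
\mathit{HYP2F1}\!\left(1, \tfrac{1}{4}, \tfrac{7}{4}, 1\right) &= \frac{\Gamma(7/4)\,\Gamma(1/2)}{\Gamma(3/4)\,\Gamma(3/2)}, \\
\mathit{HYP2F1}\!\left(\tfrac{1}{2}, -\tfrac{1}{4}, \tfrac{3}{4}, 1\right) &= \frac{\Gamma(3/4)\,\Gamma(1/2)}{\Gamma(1/4)\,\Gamma(1)}.
\end{align*}
Taking the ratio, the factor $\Gamma(1/2)$ cancels, and using $\Gamma(1) = 1$ gives the asserted value $\tfrac{1}{3}\,\Gamma(7/4)\Gamma(1/4)/[\Gamma(3/4)^2\Gamma(3/2)]$.

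The main subtlety is the asymmetric choice of Pfaff parameters. The more symmetric option of applying Pfaff with $a = 1$ to \emph{both} factors produces in the denominator a function $\mathit{HYP2F1}(1, \tfrac{1}{4}, \tfrac{3}{4}, \cdot)$ with $c-a-b = -\tfrac{1}{2} < 0$, whose series diverges as $w \to 1^-$; this hides the cancellation and prevents direct use of Proposition \ref{prop:valat1}. The mixed choice above is exactly what is needed to make both transformed series convergent at $w = 1$, and the slight mismatch between the power factors $(1+A^2)^{-1}$ and $(1+A^2)^{-1/2}$ is precisely what leaves behind the finite prefactor $A/\sqrt{1+A^2}$.
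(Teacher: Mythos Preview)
Your argument is correct and follows the same route as the paper: apply Pfaff's transformation with $a=1$ in the numerator and $a=\tfrac{1}{2}$ in the denominator, simplify to $\tfrac{1}{3}\,\tfrac{A}{\sqrt{1+A^2}}\cdot \mathit{HYP2F1}(1,\tfrac14,\tfrac74,w)/\mathit{HYP2F1}(\tfrac12,-\tfrac14,\tfrac34,w)$ with $w=A^2/(1+A^2)$, and then evaluate at $w=1$ via Gauss's formula. Your justification of continuity at $w=1$ (absolute convergence of the series since $c-a-b=\tfrac12>0$) is in fact more careful than the paper's one-line appeal to ``analyticity,'' and the closing remark explaining why the symmetric choice $a=1$ in both factors fails is a nice addition.
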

\begin{proof}
 Using Proposition \ref{prop:pfaff} in both numerator and denominator we find 
 \begin{equation*}
 G(A) = \frac{1}{3} \frac{\sqrt{1+ A^2}}{ \mathit{HYP2F1}(1, - \frac{1}{4}, \frac{3}{4}, \frac{A^2}{A^2 + 1} ) } \frac{A}{(1+A^2) } \mathit{HYP2F1}\left( 1 , \frac{1}{4} , \frac{7}{4} , \frac{A^2}{A^2 + 1} \right) 
 \end{equation*}
Cancelling in numerator and denominator we obtain
\begin{equation*}
\lim_{A \rightarrow \infty} G(A) = \lim_{A \rightarrow \infty } \frac{A}{\sqrt{1+A^2}}\frac{1}{3}\frac{\mathit{HYP2F1} ( 1, \frac{1}{4}, \frac{7}{4}, \frac{A^2}{A^2 + 1}  )  }{\mathit{HYP2F1} ( \frac{1}{2} , \frac{-1}{4} , \frac{3}{4}, \frac{A^2}{A^2 + 1}) } = \frac{1}{3} \frac{\Gamma( \frac{7}{4}) \Gamma( \frac{1}{4}) }{\Gamma(\frac{3}{4})^2\Gamma(\frac{3}{2})}, 
\end{equation*} 
where we used Proposition \ref{prop:valat1} for the last identity step. We also would have to use continuity of $\mathit{HYP2F1}$ at 1, which however follows from analyticity.
\end{proof}

\section{Proofs of Results in Section 4.1}
\begin{proof}[Proof of Proposition \ref{prop:p12}]
Since $\gamma_1\in W^{2,1}_{loc}((0,1);\mathbb{R}^2)$, it lies in $C^1((0,1);\mathbb{R}^2)$, so it is locally Lipschitz. Therefore using $\gamma_1' \geq 0 $ and \cite[Theorem 1, Section 3.4.2]{Evansgariepy}, we find
\begin{eqnarray*}
1 = \int_0^1 \gamma_1' dt & =&  \lim_{n \rightarrow \infty} \int_{\frac{1}{n}}^{1- \frac{1}{n}} |\gamma_1' | dt \\ &= & \lim_{n \rightarrow \infty} \int_{\mathbb{R}} \mathcal{H}^0( \{ t \in ( \nicefrac{1}{n} , 1- \nicefrac{1}{n})\;  | \;  \gamma_1(t) = x \} ) dx  .
\end{eqnarray*}
Note that the integrand is actually measurable because of \cite[Lemma 2 (ii), Section 3.3]{Evansgariepy}. The integrand is monotone in $n$ and converges pointwise - as $\mathcal{H}^0$ is a measure - to 
\begin{equation*}
x \mapsto \mathcal{H}^0 \left( \bigcup_{n \in \mathbb{N}} \{ t \in ( \nicefrac{1}{n} , 1- \nicefrac{1}{n})\;  | \;  \gamma_1(t) = x \} \right) = \mathcal{H}^0( \{ t \in ( 0 , 1 )\;  | \;  \gamma_1(t) = x \}). 
\end{equation*}
The monotone convergence theorem implies that 
\begin{equation}\label{eq:dingsda}
1  = \int_{\mathbb{R}} \mathcal{H}^0( \{ t \in ( 0 , 1 )\;  | \;  \gamma_1(t) = x \}) dx . 
\end{equation}
Now $\gamma_1 : [0,1] \rightarrow [0,1]$ is surjective since $\gamma_1$ is continuous, monotone and $\gamma_1(0) = 0$, $\gamma_1(1) = 1$. The intermediate value theorem and the fact that $\mathcal{H}^0$ is the counting measure implies that 
\begin{equation*}
\mathcal{H}^0 ( \{ t \in (0,1) | \gamma_1(t) = x \} )  \geq 1 \quad \forall x \in (0,1).
\end{equation*} 
If $\mathcal{H}^0( \{ t \in (0,1) | \gamma_1(t) = x \} ) >  1$ on a subset of $(0,1)$ of nonzero Lebesgue measure, then \eqref{eq:dingsda} would fail to hold. This proves the claim. 
\end{proof}
\begin{proof}[Proof of Proposition \ref{prop:p15}]
Follows immediately from the fact that $W^{1,1}$ functions are absolutely continuous (see  \cite[Theorem 2.17]{Buttazzogiaquinta}), and have also bounded variation, and the Banach-Zaretsky Theorem, \cite[Theorem 4.6.2]{Benedetto}.
\end{proof}
\begin{proof}[Proof of Proposition \ref{prop:p11}]
Since monotone functions are a.e. differentiable, there is a null set $N_1$ such that $v^{-1}$ is differentiable in a classical sense on $(v(a),v(b)) \setminus N_1$. There is another null set $N_2$ such that $v$ is differentiable on $(a,b) \setminus N_2$. Thanks to Proposition \ref{prop:p15} also $v(N_2)$ is a null set. Now for each $x_0 \in (v(a),v(b))\setminus N_1 \cup v(N_2)$  it holds that 
\begin{eqnarray*}
1 & = & \lim_{x \rightarrow x_0} \frac{x-x_0}{x- x_0} = \lim_{ x \rightarrow x_0} \frac{v(v^{-1}(x))- v (  v^{-1}(x_0))}{v^{-1}(x)- v^{-1}(x_0)} \frac{v^{-1}(x) - v^{-1}(x_0)}{x- x_0} \\ & = & (v^{-1})'(x_0) v'(v^{-1}(x_0)) .
\end{eqnarray*}
The claim follows from solely this equation, pointing out particularly that the last line implies that $v' \circ v^{-1}$ is measurable since it is nonzero almost everywhere and its reciprocal coincides with a measureable function a.e. 
\end{proof}
\begin{proof}[Proof of Proposition \ref{prop:p14} ]
Clearly $u \circ v$ is bijective and $v^{-1} \circ u^{-1} \in W^{1,1} $ follows directly from \cite[Section 4.2.2, Theorem 4]{Evansgariepy}. For $u \circ v \in W^{1,1}$ observe that, for $\phi \in  C_0^\infty(a,b)$ 
\begin{align*}
\int_a^b u(v(x))\phi'(x) dx  &= \int_{v(a)}^{v(b)} u(y) \phi'(v^{-1}(y)) (v^{-1})'(y) dy \\ & = \int_{v(a)}^{v(b)} u(y) (\phi \circ v^{-1})'(y) dy = -\int_{v(a)}^{v(b)} u'(y) \phi(v^{-1}(y) ) dy \\ &= -\int_a^b u'(v(x)) v'(x) \phi(x) dx .
\end{align*}
and $(u' \circ v )v' \in L^1(a,b) $ according to  \cite[Theorem 263 D]{Fremlin}. 
\end{proof}
\begin{proof}[Proof of Proposition \ref{prop:p16}]
The composition $u \circ v$ is continuous on $[a,b]$ so certainly $L^1(a,b)$. Now let $N$ be a set of measure zero such that $v$ is differentiable and $v' \neq 0 $ on $N^C$, see Proposition \ref{prop:p11}. Let $\phi \in C_0^\infty(a,b)$. Then using \cite[Theorem 263 D]{Fremlin}, Proposition \ref{prop:p15} and the product rule in $W^{1,1}(a,b)$ 
\begin{align*}
\int_a^b (u \circ v)(x) \phi'(x) dx  & =  \int_a^b u(v(x)) \phi'(x) dx 
\\ &= \int_{(a,b) \setminus N} u(v(x) ) \phi'(v^{-1}(v(x))) \frac{v'(x)}{v'(x)} dx 
\\ & = \int_{ v((a,b) \setminus N ) } u(z) \phi'(v^{-1}(z)) \frac{1}{v'(v^{-1}(z))} dz   \\ & =\int_{v(a)}^{v(b)} u(z) (\phi \circ v^{-1})'(z) dz = - \int_{v(a)}^{v(b)} u'(z) \phi(v^{-1}(z)) dz \\ 
& = - \int_{v((a,b) \setminus N )} u'(z) \phi(v^{-1}(z)) dz =  - \int_{(a,b) \setminus N } u'(v(x)) v'(x) \phi(x) dx \\ & =  - \int_a^b u'(v(x)) v'(x) \phi(x) dx .
\end{align*} 
Now $(u' \circ v)v' \in L^1$ follows from in \cite[Theorem 263 D]{Fremlin} and this proves the claim.   
\end{proof}

\begin{proof}[Proof of Proposition \ref{prop:p18}]
Let $(\rho_\epsilon)_{\epsilon > 0 } $ be the standard mollifier. Since the Picard-Lindelöf Theorem is not directly applicable we modify the differential equation first. Define for each $\epsilon > 0$ $\phi_\epsilon$ to be the global solution of 
\begin{equation*}
\begin{cases}
\phi_\epsilon'(s) = \frac{1}{\sqrt{1 + (u' * \rho_\epsilon)(\phi_\epsilon(s))^2}}  &  \\ \phi_\epsilon(0) = 0 
\end{cases}
\end{equation*}
where we tacitly extend $u'$ by zero on all of $\mathbb{R}$ to make the convolution well-defined and smooth on $\mathbb{R}$. Now note that for each $\widetilde{L}> 0$ the norm  $||\phi_\epsilon||_{W^{1,2}(0,\widetilde{L})}$ is bounded, since $\phi_\epsilon(0) = 0 $ and $||\phi_\epsilon'||^2_{L^2(0,\widetilde{L})} \leq \widetilde{L} $. Therefore a subsequence, which we denote again by $\phi_\epsilon $, converges to some $\phi_{\widetilde{L} } \in W^{1,2}(0,\widetilde{L})$ weakly. If we consider $L'> \widetilde{L}$ the restriction of $\phi_{L'}$ on $[0, L]$ coincides with $\phi_L$, a.e.. So we will leave out the index from now on and simply write $\phi$. Notice  that $\phi$ is continuous and increasing. Let
\begin{equation*}
E:= \{ x \in (0,\infty) | \;  \phi(x) \in (0,1) \}. 
\end{equation*}
Then, $E$ is an open interval. We will now show that 
\begin{equation*}
(u' * \rho_\epsilon) \circ \phi_\epsilon \rightarrow u' \circ \phi \quad \textrm{pointwise a.e. on} \;  E .
\end{equation*}
First observe that for each $\widetilde{L} > 0$, $\phi_\epsilon$ converges to $\phi$ uniformly in $[0,\widetilde{L}]$ because of compactness of the embedding $W^{1,2} \hookrightarrow C^0$. Now fix $x \in (0,\infty)$ such that $\phi(x) \in (0,1)$ and let $\widetilde{L} > 0 $ such that $x \in [0, \widetilde{L}]$ . Then
\begin{align*}
| ((u' * \rho_\epsilon) \circ \phi_\epsilon)(x) - & (u' \circ \phi)(x) |  \\ & \leq  \int_{B_\epsilon(\phi_\epsilon(x)) } | u'(y) - u'(\phi(x) ) | \rho_\epsilon( \phi_\epsilon(x) - y) dy \\ & \leq  \sup_{y \in B_\epsilon(\phi_\epsilon(x) ) }| u'(y) - u'(\phi(x)) |  \int_{B_\epsilon(\phi_\epsilon(x)) } \rho_\epsilon(\phi_\epsilon(x) -y) dy  \\ & \leq  \sup_{y \in B_\epsilon(\phi_\epsilon(x) )} | u'(y) - u'(\phi(x)) | . 
\end{align*}
If $y \in B_\epsilon(\phi_\epsilon(x)) $ then 
\begin{equation*}
|y - \phi(x) | \leq |y- \phi_\epsilon(x) | + ||\phi_\epsilon - \phi ||_{L^\infty(0, \widetilde{L})} \leq \epsilon + || \phi_\epsilon - \phi||_{L^\infty(0, \widetilde{L}) }  .
\end{equation*}
Since $\phi(x) $ was assumed to be an element of $(0,1)$, there is $\epsilon_0 > 0 $ such that $|\phi(x) | , |1 - \phi(x) |> \epsilon + || \phi_\epsilon - \phi||_{L^\infty(0, \widetilde{L}) }  $ for each $\epsilon < \epsilon_0$. Then 
\begin{equation*}
| ((u' * \rho_\epsilon) \circ \phi_\epsilon)(x) - (u' \circ \phi)(x) |  \leq \sup_{y \in B_{\epsilon + || \phi_\epsilon- \phi||_{L^\infty(0,\widetilde{L})}(\phi(x))} }  | u'(y) - u'(\phi(x) ) | \rightarrow 0 \quad ( \epsilon \rightarrow 0 ),
\end{equation*}
because of continuity of $u'$. 

We now show that $\phi$ solves the desired differential equations and lies in $C^1(0,L) \cap W^{1,1}(0,L)$ for some $L>0$. Using the dominated convergence theorem we find that 
\begin{equation*}
\phi_\epsilon' = \frac{1}{\sqrt{1+[(u'* \rho_\epsilon) \circ \phi_\epsilon]^2}} \rightarrow \frac{1}{\sqrt{1+ (u' \circ \phi)^2}} 
\end{equation*}
in $L^2(0,\widetilde{L})$ for each $\widetilde{L} >0 $ . Therefore $\phi' = \frac{1}{\sqrt{1+ (u' \circ \phi)^2}}$ a.e.  on $[0,\infty)$ which implies that $\phi$ solves the prescribed differential equation in $E$ and thus  $\phi \in C^1(E)$. Now $E$ is a finite interval since, with $\lambda$ denoting the Lebesgue measure, we find that 
\begin{eqnarray*}
\lambda(E) & =  & \int_E 1 ds = \int_E \frac{\sqrt{1+ u'(\phi(s))}}{\sqrt{1+ u'(\phi(s))}} ds = \int_E \phi'(s) \sqrt{1+ u'(\phi(s))} ds  \\ & =  & \int_0^1 \sqrt{1+ u'^2}dx < \infty.  
\end{eqnarray*} 
Therefore $E = (0,L) $ for some $L> 0$ as claimed, which also implies that $\phi \in W^{1,1}(0,L)$. Note from the equation for $\phi'$ can also be inferred that $\phi \in C^1(0,L)$.

To show that $(\phi, u \circ \phi)$ is a (weak) reparametrization of $(x,u(x))$ one needs to show that $\phi$ is invertible in $W^{1,1}$. Since $\phi \in C^1(0,L)$ and $\phi' >0 $ on $(0,L)$, $\phi$ is bijective and possesses an inverse $\phi^{-1}$ such that for each $x \in (\phi(0), \phi(L)) = (0,1)$ it holds that
\begin{equation*}
(\phi^{-1})'(x) = \frac{1}{\phi'(\phi^{-1}(x))} = \sqrt{1+ u'(x)^2},
\end{equation*}
which has finite integral due to the fact that $u \in W^{1,1}(0,1)$. This implies the claim. It remains to show that if $u \in W^{2,2}_{loc}(0,1)$, then $\phi\in W^{2,2}_{loc}(0,L)$ and the reparametrization lives in $W^{2,1}(0,L)$ provided that $u$ is concave. For the first assertion, fix $\psi \in C_0^\infty (0,L )$ and note that $\psi \circ \phi^{-1} \in C_0^1(0,1)$ and 
\begin{align}\label{eq:weakdif}
 & \int_0^L \phi'(s) \psi'(s) ds =  \int_0^L \frac{1}{\sqrt{1+u'(\phi(s))^2}} \psi'(s) ds \\ & \qquad =  \int_0^1 \frac{1}{\sqrt{1+ u'(x)^2}} (\psi \circ \phi^{-1})'(x) dx =  -\int_0^1 \frac{-u''(x) u'(x) }{(1+ u'(x)^2)^\frac{3}{2}} \psi(\phi^{-1}(x)) dx \nonumber \\ & \qquad = - \int_0^L \frac{- u''(\phi(s)) u'(\phi(s))}{(1+ u'(\phi(s))^2)^2} \psi(s) ds . \nonumber
\end{align} 
Clearly, this weak derivative lies in $L^2_{loc}(0,L)$ and hence $\phi \in W^{2,2}_{loc} (0,L)$. If $u$ is additionally concave then $u'' \leq 0 $ a.e. yields that 
\begin{equation*}
\int_0^L |\phi''(s)| ds = \int_0^L \frac{- u''(\phi(s))| u'(\phi(s))|}{(1+ u'(\phi(s))^2)^2} = \int_0^1 \frac{ - u''(x)| u'(x)|}{(1 + u'(x)^2 )^\frac{3}{2}} dx .
\end{equation*}
Using the monotone convergence theorem we find 
\begin{equation*}
\int_0^L |\phi''(s)| ds = \lim_{\epsilon \rightarrow 0 } \int_{\epsilon}^{1-\epsilon}  \frac{ - u''(x) |u'(x)|}{(1 + u'(x)^2 )^\frac{3}{2}} dx = \lim_{\epsilon \rightarrow 0 }  \left( H(u'(1-\epsilon)) - H(u'(\epsilon)) \right). 
\end{equation*}
where 
\begin{equation*}
H(z) = \int_0^z \frac{|y|}{(1+ y^2)^\frac{3}{2}} dy.
\end{equation*}
This limit exists in $\mathbb{R}$ thanks to boundedness and monotonicity of $H \circ u'$. Therefore $\phi \in W^{2,1}$. It remains to show that $u \circ \phi \in W^{2,1}$. With arguments similar to  \eqref{eq:weakdif} one can show that $(u \circ \phi)'$ is weakly differentiable. Now observe that for $s \in (0,L) $
\begin{eqnarray*}
(u \circ \phi)''(s) & =  & u''(\phi(s)) \phi'(s)^2 + u'(\phi(s)) \phi''(s) \\ & =& \frac{u''( \phi(s) )}{1 + u'(\phi(s))^2} - \frac{u'(\phi(s))^2 u''(\phi(s))}{(1+ u'(\phi(s))^2)^2} \\ & =&\frac{u''(\phi(s))}{(1 + u'(\phi(s))^2)^2}.
\end{eqnarray*}
Therefore, again due to concavity and the monotone convergence theorem 
\begin{eqnarray*}
\int_0^L |(  u\circ \phi ) ''(s) | ds & =&  \int_0^L \frac{-u''(\phi(s))}{(1 + u'(\phi(s))^2  )^2} ds = \int_0^1 \frac{-u''(x)}{(1+ u'(x)^2)^\frac{3}{2}} dx \\ &= & \lim_{\epsilon \rightarrow 0 } \left( \frac{u'(\epsilon)}{\sqrt{1+ u'(\epsilon)^2}} - \frac{u'(1- \epsilon)}{\sqrt{1+ u'(1- \epsilon)^2}} \right), 
\end{eqnarray*}
which again exists due to concavity and monotonicity of $z \mapsto \frac{z}{\sqrt{1+z^2}}$. 
\end{proof}

\section{Proofs of Results in Section 4.2}
\begin{proof}[Proof of Lemma \ref{lem:l1}]
It is clear by definition that $u$ is weakly differentiable. Suppose that $K \subset (0,1)$ is compact. Let $( \psi_\epsilon)_{\epsilon > 0 }$ be a sequence of standard mollifiers. Then there is $\epsilon_0 > 0 $ such that $\phi_\epsilon := (\mathrm{sgn}(u') \chi_K )* \psi_\epsilon $ lies in $C_0^\infty(0,1)$ for all $\epsilon < \epsilon_0$. Convolution implies that $||\phi_\epsilon||_\infty \leq 1 $. Now $\phi_\epsilon \rightarrow \mathrm{sgn}(u') \chi_K$ in $L^q(0,1)$,  for $q \in [1, \infty) $ chosen  such that $\frac{1}{p}+ \frac{1}{q} =1$. Hence
\begin{equation*}
\int_K  |u'| dx = \lim_{\epsilon \rightarrow 0 } \int_0^1 u' \phi_\epsilon dx  = \lim_{\epsilon \rightarrow 0 } - \int_0^1 u \phi_\epsilon' dx \leq \int |Du| ,
\end{equation*}
where $\int |Du|$ denotes the total variation of $u$ on $(0,1)$. Now we can exhaust $(0,1)$ by compact sets and the monotone convergence theorem implies that 
$ \int_0^1 |u'| dx < \infty . $
\end{proof} 


\begin{proof}[Proof of Lemma \ref{lem:rgulang}]
For the variational inequality: Since $M_1$ is convex we find for each $v \in M_1$ :
\begin{equation*}
0 \leq \frac{d}{dt}_{\mid_{t=0}} \int_{a}^{b} \sqrt{1 + (u' + t(v'-u'))^2} = \int_{a}^{b} \frac{u'}{\sqrt{1+ u'^2}} (v' - u') dx .
\end{equation*}
Uniqueness of the solution of this inequality can be shown using very elementary arguments. From the variational inequality can be inferred that $u \in C^1([a,b])$, similar to \cite[Section II.7]{Kinderlehrerstampacchia}.

 Now suppose that there is an $x \in (a,b)$ such that $|u'(x)| > ||\psi'||_{\infty, (a,b)}$. This already implies that $u(x) > \psi(x):$ Indeed, if $u(x) = \psi(x) $ then $u, \psi \in C^1 $ implies that $u'(x) = \psi'(x)$, which is a contradiction.
 Therefore $x$ lies in a connected component of $\{y \in (a,b) | u(y) >\psi (y) \}   $. If $D$ is such a connected component, then for each $\psi \in C_0^\infty(D)$ there is $\epsilon>0$ such that for $|t| < \epsilon$, we have that $u + t\psi \in M_1$. Choosing $v = u+t \psi $ in  \eqref{eq:laengenvarungl} for $|t|< \epsilon$ we find that
\begin{equation}\label{eq:konstabl}
\int_a^b \frac{u'}{\sqrt{1+ u'^2}} \psi' = 0 .
\end{equation}
This implies that $u'$ is constant on $D$, but in case that $u $ touches $\psi $ somewhere, there is some boundary point of $D$ where $u$ touches $\psi $ and by virtue of that there is some $z \in (a,b)$ which is a boundary point of $D$ such that $|u'(x)| = |\psi' (z) | \leq || \psi'||_{\infty}  $. A contradiction. In case that $u$ does not touch  $ \psi $ in $(a,b)$, then $ D = (a,b)$ and \eqref{eq:konstabl} yields that $u$ is a line, in which case $|u'(x)| = \frac{|d_2 - d_1|}{b - a}$. We have shown
\begin{equation}\label{eq:defC}
|u'(x)| \leq \max \left( \frac{|d_2 - d_1|}{b - a} , ||\psi'||_\infty \right)  := C.
\end{equation}

 From now on we proceed similar to  \cite[Section 5.4]{Buttazzogiaquinta} to show $W^{2,2}_{loc} $-regularity. For this we first introduce the difference quotient operator. Fix $\delta > 0 $.  For a function $v : (a, b) \rightarrow \mathbb{R}$ and $0 < |h|< \delta $ we set
 \begin{equation*}
 \Delta_h v (x) = \frac{v(x+h) - v(x)}{h} , \quad  x \in (a+ \delta, b-  \delta) .
 \end{equation*}
 Now for each $\eta \in C_0^\infty( a+ 2\delta , b - 2\delta ) $ such that $\eta \geq 0 $  
%
 and $|h|< \delta$ fixed, we set 
  \begin{equation*}
 w_\epsilon(x) := u(x) + \begin{cases}  \epsilon \Delta_{-h} ( \eta^2 \Delta_h (u- \psi)) (x) &  x \in (a+\delta,b- \delta), \\ 0 & \textrm{otherwise} \end{cases},
 \end{equation*}
By  \cite[p.191 below (5.57)]{Buttazzogiaquinta}, $w_\epsilon$ is an element of $M_1$ for sufficiently small $\epsilon > 0 $.
Choose $\eta \in C_0^\infty((a + 2 \delta, b- 2 \delta) )$  such that $0 \leq \eta \leq 1 $ and $ \eta \equiv 1 $ on $(a+ 3\delta, b- 3 \delta)$.  Plugging into \eqref{eq:laengenvarungl} we obtain that  
\begin{equation*}
0 \leq \epsilon \int_a^b \frac{u'}{\sqrt{1+u'^2}} \left( \Delta_{-h} ( \eta^2 \Delta_h (u' - \psi') + 2 \eta \eta' \Delta_h (u-\psi) \right) dx .
\end{equation*}
Dividing by $\epsilon$ and proceeding similar to \cite[Bottom of p. 191]{Buttazzogiaquinta} we obtain that 
\begin{equation*}
\int_a^b \Delta_h \left( \frac{u'}{\sqrt{1+u'^2}} \right) \eta^2 \left(  \Delta_h (u' - \psi') + 2\eta \eta' \Delta_h (u- \psi) \right)  dx \leq 0  
\end{equation*}
and therefore 
\begin{align}\label{eq:diffiquooti}
\int_a^b \eta^2 \Delta_h \frac{u'}{\sqrt{1+u'^2}} \Delta_h u' dx  & \leq \int_a^b \eta^2 \left\vert \Delta_h \frac{u'}{\sqrt{1+u'^2}} \right\vert \; | \Delta_h \psi'| dx  \\ & \quad + 2 \int_a^b |\eta| \; |\eta'| \;  |\Delta_h(u-\psi)| \; \left\vert \Delta_h \frac{u'}{\sqrt{1+u'^2}} \right\vert  dx .
\end{align}
Since $z \mapsto \frac{z}{\sqrt{1+z'^2}}$ is monotone we find that 
\begin{equation*}
\Delta_h \frac{u'}{\sqrt{1+u'^2}} \Delta_h u' = \left\vert \Delta_h \frac{u'}{\sqrt{1+u'^2}} \right\vert \; \left\vert \Delta_h u' \right\vert,
\end{equation*}
in particular the left hand side of \eqref{eq:diffiquooti} is positive. We claim that 
\begin{equation}\label{eq:nonoton}
  \frac{1}{(1+C^2)^\frac{3}{2}} |\Delta_h u'| \leq \left\vert \Delta_h \frac{u'}{\sqrt{1+u'^2}}  \right\vert \leq  | \Delta_h u'| ,
\end{equation}
where $C$ is defined in \eqref{eq:defC}. Indeed, 
\begin{equation*}
\left\vert \Delta_h \frac{u'}{\sqrt{1+u'^2}}  \right\vert = \frac{1}{|h|} \left\vert \int_{u'(x)}^{u'(x+h)} \frac{1}{(1+s^2)^\frac{3}{2}} \right\vert, 
\end{equation*}
whereupon the upper estimate follows immediately and the lower estimate follows from the fact that $||u'||_\infty \leq C$, see \eqref{eq:defC}.
Using the estimate and \eqref{eq:nonoton} in \eqref{eq:diffiquooti} we find 
\begin{equation*}
\frac{1}{(1+C^2)^\frac{3}{2}}\int_a^b \eta^2 |\Delta_hu'|^2 dx \leq \int_a^b \eta^2 |\Delta_h u'| \; | \Delta_h \psi' |  dx+ \int_a^b |\eta \Delta_h u'|  \; |\eta' \Delta_h(u - \psi) | dx. 
\end{equation*}
Using the Peter-Paul inequality we find for arbitrary $\epsilon > 0$:
\begin{equation}\label{eq:peterpauil}
\frac{1}{(1+C^2)^\frac{3}{2}}\int_a^b \eta^2 |\Delta_hu'|^2 dx \leq 2 \epsilon \int_a^b \eta^2 |\Delta_h u'|^2 dx + \frac{1}{4\epsilon} \int_a^b \eta^2 (\Delta_h \psi')^2 dx + \frac{1}{4\epsilon} \int_a^b \eta'^2 ( \Delta_h (u- \psi) ) ^2 dx.
\end{equation}
Observe that by \cite[Section 5.8.2, Theorem 3 (i)]{Evans} there is $D = D(\delta) > 0$ such that for sufficiently small $h > 0 $ 
\begin{equation*}
\int_{\mathrm{supp}(\eta) } (\Delta_h \psi')^2 dx \leq D \int_{B_\delta(\mathrm{supp}(\eta))} \psi''^2 dx \leq D||\psi''||_{L^2(a+\delta, b- \delta)}^2 
\end{equation*}
and by Lipschitz continuity of $\psi$ and  $u$ we have 
\begin{equation*}
|\Delta_h(u-\psi) | \leq ||u'- \psi'||_\infty \leq C  + ||\psi'||_\infty.
\end{equation*}
Choosing $\epsilon = \frac{1}{4(1+C^2)^\frac{3}{2}}$ we obtain 
\begin{equation*}
\int_{a + 3 \delta}^{b - 3\delta} (\Delta_h u')^2 dx \leq  \int_a^b \eta^2 |\Delta_h u'|^2 dx \leq \frac{(1+C^2)^3}{2} \left( D||\psi''||_{L^2(a+ \delta, b- \delta)} + ||\eta'||_\infty (C+ ||\psi'||_\infty) (b-a) \right).
\end{equation*}
From \cite[Section 5.8.2, Theorem 3 (ii)]{Evans} we conclude that $u'' \in L^2_{loc}(a,b)$ and the claim follows. 
\end{proof}

\subsection*{Acknowledgments}
The author would like to thank Anna Dall'Acqua and the anonymous referee for very helpful suggestions and comments.

\end{document}